\documentclass{article}
\usepackage{braket,tikz,float,mathrsfs,verbatim,bbm,dsfont, amssymb,mathtools,amsthm}
\usepackage{float}
\usepackage{algorithm}
\usepackage{algorithmic}
\usepackage[margin=1.0in]{geometry}
\usepackage{xcolor}
\usepackage{hyperref}
\hypersetup{hidelinks}

\newtheorem{Thm}{Theorem}[section]
\newtheorem{Def}[Thm]{Definition} 
\newtheorem{Lemma}[Thm]{Lemma}

\newtheorem{Prop}[Thm]{Proposition}
\newtheorem{Remark}{Remark}[section]
\newtheorem{Asm}{Assumption}[section]

\newcommand{\E}{\mathbb{E}}
\newcommand{\R}{\mathbb{R}}

\newcommand{\Hb}{\mathbf{H}}
\newcommand{\RPC}{\mathrm{RPC}}
\newcommand{\pRPC}{\mathrm{pRPC}}
\newcommand{\mRPC}{\mathrm{mRPC}}
\newcommand{\Lb}{\mathbf{L}}

\newcommand{\Ib}{\mathbf{I}}

\newcommand{\nnz}{\mathrm{nnz}}

\newcommand{\Ecal}{\mathcal{E}}

\newcommand{\tm}{\mathrm{m}}
\newcommand{\ILS}{\mathcal{J}_{L,S}}
\newcommand{\m}{\mathrm{Int}}
\newcommand{\multiset}{(\alpha,\beta,\gamma)\in \ILS}

\numberwithin{equation}{section}

\title{A Recursive Polynomial Chaos Evolution Method for Stochastic Differential Equations\thanks{Submitted to the editors on DATE.  }}

\author{
Guillaume Bal\thanks{Departments of Statistics and Mathematics, University of Chicago, Chicago, IL 60637, USA. Email: \texttt{guillaumebal@uchicago.edu}.}
\and
Shengbo Ma\thanks{Department of Mathematics, The University of Hong Kong, Pokfulam Road, Hong Kong SAR, P.R. China. Email: \texttt{u3011677@connect.hku.hk}.}
\and
Su Zhang\thanks{Department of Mathematics, The University of Hong Kong, Pokfulam Road, Hong Kong SAR, P.R. China. Email: \texttt{zhangsu24@connect.hku.hk}.}
\and
Zhiwen Zhang\thanks{Department of Mathematics, The University of Hong Kong, Pokfulam Road, Hong Kong SAR, P.R. China. Email: \texttt{zhangzw@hku.hk}.}
}

\date{}

\begin{document}

\maketitle

\begin{abstract}
Numerical simulation of stochastic differential equations over long time intervals poses significant computational challenges. In this paper, we propose a novel recursive polynomial chaos evolution method that achieves model reduction without sampling by exploiting the Markov property to maintain a fixed low-dimensional representation throughout the time evolution. At each time step, we construct orthogonal polynomial bases adapted to the current probability measure, and project the one-step-ahead solution onto this new basis together with the new Brownian increments. This dynamic updating strategy effectively reduces the dimension of the random variables during long-time evolution. Under appropriate assumptions, we prove the convergence of the method, specifically that the distributions generated by the method preserve convergence in the Wasserstein-1 distance. We present numerical results demonstrating that the method can accurately capture complex dynamical behaviors with high accuracy and low computational cost.
\end{abstract}

\medskip
\noindent\textbf{Keywords:} Uncertainty quantification, model reduction, stochastic differential equations (SDEs), polynomial chaos expansions, long-time integration, convergence analysis.

\medskip
\noindent\textbf{AMS subject classifications:} 65C30, 60H35, 41A10, 65M70.

\section{Introduction}

Stochastic differential equations (SDEs) play an important role in many areas of engineering and applied sciences, such as solid mechanics, filtering, stochastic control, and finance \cite{HOU2006687,intermtestmodel1,papanicolaou2019introductionstochasticdifferentialequations}. Of particular interest is the long-time dynamical behavior of SDEs, including convergence to invariant measures, periodic phenomena, and intermittency \cite{intermitemodel2,FANG2023133919}. Since obtaining analytical solutions to SDEs is infeasible for most practical problems, reliable and efficient numerical methods need to be developed.

While standard Monte Carlo (MC) methods and their variants are widely used for SDE simulation \cite{Kloeden1992,james1980monte,Caflisch_1998,giles2015multilevel}, their need for a large number of samples makes them computationally expensive. To reduce computational costs, model reduction techniques have been proposed that approximate the random space using low-dimensional representations. A representative class of such methods is the polynomial chaos expansion (PCE) framework \cite{ghanem1991,xiu2003,Arbitrartygpc,HOU2006687,luo2006wiener}. The PCE solution is represented as a truncated spectral expansion in an orthogonal polynomial basis that corresponds to the probability measure of the input randomness. This representation transforms the stochastic problem into a deterministic system of equations for the expansion coefficients. However, standard PCE encounters the curse of dimensionality when applied to long-time integration of systems driven by white noise.  The number of random variables required to represent Brownian motion grows linearly in time, which leads to exponential growth in the number of basis functions, as a fixed polynomial basis must incorporate all variables from the outset $\cite{HOU2006687,luo2006wiener,intermitemodel2}$. Several dynamically evolving basis methods were developed to address these limitations, such as the time-dependent generalized polynomial chaos (TDgPC) method \cite{GERRITSMA20108333,heuveline2014hybrid} for random differential equations and the dynamical generalized polynomial chaos (DgPC) \cite{Bal2,bal2017300,lacour2021dynamic} for system driven by Brownian motions.

Another class of model reduction methods seeks to evolve the solution dynamically on the manifold of low-rank tensors. Here, by a low-rank tensor, we mean a reasonably small sum of products of functions involving a subset of the independent (such as spatial, angular, stochastic) variables. Dynamically orthogonal (DO) \cite{Sapsis2009DynamicallyOF}, dynamically bi-orthogonal (DyBO) \cite{CHENG2013753,CHENG2013843}, and dynamical low-rank approximation (DLRA) \cite{Lubich050639703} are representative methods, which project the temporal derivative to the tangent space of the low-rank manifold and solve the time-dependent differential equations of the low-rank solution. The low-rank structure can correspondingly reduce computational and storage requirements. These methods may suffer from certain numerical instabilities: for DO, when singular values become too small, and for DyBO, at singular value crossings. To overcome these numerical issues, projector-splitting integrators in DLRA \cite{Lubich2014,Ceruti2022} provide split evolution strategies for low-rank solutions based on time discretizations, which enhance robustness compared with standard numerical integrators. Most theoretical and numerical developments of these methods focus on PDEs and RPDEs (\cite{Ceruti20234,Einkemmer2018,Kazashi2021}), while they rarely solve equations with stochastic forcing terms.

Recent extensions of low-rank approximation methods to SDEs were analyzed in \cite{kazashi2025dynamical,Lujianfengdynamical}. Deriving equations for the low-rank approximations becomes challenging because random forcing is no longer smooth in time. When dealing with stochastic problems, these low-rank methods still require MC or PCE techniques as computational components, which further limits their effectiveness for long-time integration. The use of model reduction techniques for long-time SDE evolutions thus remains challenging. 

Building on these ideas, we propose a recursive polynomial chaos (RPC) method for the long-time integration of SDEs. Our key innovation lies in recursively updating the orthonormal polynomial basis at each discrete time step. This basis allows us to represent the drift and diffusion terms in a manner adapted to the time-evolving probability measure. Specifically, we leverage the Markov property of SDEs through the following approach: at each time step, we characterize the distribution using the orthonormal polynomials of the current probability measure. We then represent the next-step solution by projecting it onto this basis combined with the Brownian increment, and proceed to construct the orthonormal polynomials for this new distribution. We present two complementary implementations of RPC: pRPC (see Algorithm \ref{algorithm3}) directly evolves the orthonormal basis and algebraic structures via Itô-Taylor expansion, while mRPC (see Algorithm \ref{algorithm}) evolves moments and reconstructs the basis from moments at each step, yielding different computational trade-offs to suit different problem requirements.  This framework can be implemented for a wide range of numerical schemes, e.g., Euler-type, Milstein, and higher-order methods \cite{Kloeden1992}.

We next provide a theoretical analysis of the convergence properties of mRPC. Under suitable conditions on the drift and diffusion terms, we construct a compactly supported approximation to our dynamical model. Assuming some natural stability properties on mRPC that we do not justify theoretically but may verify numerically, we construct the evolving orthogonal polynomials and present convergence estimates for the statistical moments generated by mRPC (See Theorem~\ref{thm: moment error}). This allows us to show, under further suitable assumptions on the dynamics, an appropriate error estimate in the Wasserstein distance between the exact and mRPC distributions in Theorem~\ref{W1distance}.

Numerical simulations are finally performed for several nonlinear stochastic systems, including moderately high-dimensional systems, systems with random coefficients or random initial conditions, and systems with complex long-time dynamics such as intermittency, non-Gaussian behavior, and convergence to an invariant measure. Our numerical results demonstrate that RPC accurately captures long-time behavior while maintaining high computational efficiency by reducing the dimension of the stochastic space inherent in PCE and avoiding the sampling burden of MC methods. In contrast to PCE, which may produce oscillations during the time evolution, and MC, which 
suffers from convergence-related statistical fluctuations, our approach exhibits smooth and stable evolution throughout the simulations. These results confirm that our method can efficiently 
capture long-time dynamical behavior. This opens promising avenues for future research: combining the RPC approach with dynamical low-rank techniques (DO, DyBO, and DLRA) can potentially tackle the challenging problem of long-time evolution for high-dimensional SDEs and SPDEs, where RPC will replace the expensive MC or PCE components that are typically employed in such frameworks $\cite{CHENG2013843,Lujianfengdynamical}$.
 
The paper is organized as follows. In Section~\ref{sec:ourmethod}, we present the RPC method and state the main convergence results. In Section~\ref{sec:proof of main result}, we provide the necessary theoretical background and detailed proofs of the main convergence results. In Section~\ref{sec:numericalres}, we present numerical results of RPC and comparisons with existing numerical methods. Concluding remarks are given in Section~\ref{sec:conclusions}. Additional auxiliary proofs are provided in Appendix Section~\ref{more_proofs}.

\section{Description of proposed method}
\label{sec:ourmethod}
    In this section, we present the RPC method for solving SDEs and state our main convergence results. We consider the following $d$-dimensional SDE driven by an $m$-dimen\-sion\-al Brownian motion on a probability space $(\Omega, \mathscr{F}, \mathbb{P})$:
    \begin{equation}
        \label{original SDE}
        du(t) = b(u(t))dt + \sigma(u(t))dW(t), \quad u(0) = u_0, \quad t \in [0,T],
    \end{equation}
     where $u(t) \in \mathbb{R}^d$, $b=(b^{(i)})_{1\leq i\leq d}$ is the drift function, $\sigma = (\sigma^{(i,j)})_{ 1 \leq i \leq d,1 \leq j \leq m }$ is the diffusion function, with $b^{(i)},\sigma^{(i,j)}: \mathbb{R}^d \rightarrow \mathbb{R}$, and $W(t)$ is an $m$-dim\-ension\-al standard Brownian motion with respect to a filtration $\mathscr{F}_t$.

    For the Numerical discretization, the time domain  $[0,T]$ is divided into uniform intervals with time nodes $0 = t_0 < \cdots <t_n=T$ and time step $ h $ with $t_k = kh$. Throughout the paper, we denote by $u(t)$ the exact solution of the SDE~\eqref{original SDE} and by $v_k \in \mathbb{R}^d$ its numerical approximation at time $t_k$. We also introduce the following standard multi-index notation. Let $\alpha = (\alpha_1, \cdots, \alpha_d) \in \mathbb{N}^d$ be a $d$-dimensional multi-index with $|\alpha| = \sum_{i=1}^d \alpha_i$ its total order. For any vector $x = (x_1, \cdots, x_d) \in \mathbb{R}^d$, we define the monomial $x^\alpha = x_1^{\alpha_1} \cdots x_d^{\alpha_d}$. The factorial of a multi-index is $\alpha! = \prod_{i=1}^d \alpha_i!$, and for multi-indices $\alpha, \beta$ with $\beta \leq \alpha$ (meaning $\beta_i \leq \alpha_i$ for all $i$), the multinomial coefficient is
    \begin{equation*}
        \binom{\alpha}{\beta} = \frac{\alpha!}{\beta!(\alpha-\beta)!} = \prod_{i=1}^d \binom{\alpha_i}{\beta_i}.
    \end{equation*}
    
    For two $d$-dimensional vectors $x = (x_1, \cdots, x_d)$ and $y = (y_1, \cdots, y_d)$, we define the Hadamard product of $d$-dimensional vectors by 
    \begin{equation}\label{Hadamard product}
        x \circ y = (x_1 y_1, x_2 y_2, \cdots, x_d y_d).
    \end{equation}
    To simplify notation, we omit the symbol $ \circ $ in the rest of the paper, but still clarify this type of vector product when necessary.

    \subsection{Formulation}
        \label{Formulation}
        The core idea of the proposed method lies in the treatment of the stochastic evolution. Instead of directly approximating the Brownian motion over the entire time span $ [0,T] $ as traditional PCE requires \cite{ghanem1991}, we treat it as a cumulative sum of independent Gaussian increments. When evolving from time  $ t_{k} $ to $ t_{k+1} $, we decompose the solution $ u(t_{k+1})$ in the polynomial chaos expansion basis constructed from the distribution of $u(t_{k})$ at time $t_{k}$ and the Gaussian increment over $[t_{k},t_{k+1}]$. This approach leverages the Markov property of the SDE solution so that the numerical evolution forms a discrete Markov process. See Remark \ref{rem:nonMarkovian} below for extensions beyond the (strictly) Markovian case.

        We illustrate and analyze RPC for the  Euler-type scheme \eqref{Euler type}: for $0\leq k\leq n-1$,
    \begin{equation}\label{Euler type}
    \begin{aligned}
        v_{k+1} = v_k + b_h(v_k) h + \sigma_{h}(v_k)  z_k\sqrt{h}, \quad v_0 = u_0, 
    \end{aligned}
\end{equation}    
     where the diffusion step is treated explicitly using independent standard Gaussian random variables $z_k = (z_k^{(j)})_{1 \leq j \leq m}$. The subscript $h$ in $b$ and $\sigma$ indicates that both the drift and diffusion operators can be numerically adjusted according to the time step size. Many numerical schemes can be represented within the framework of \eqref{Euler type}, including explicit Euler-type methods such as the Euler-Maruyama method \cite{Kloeden1992} with $b_h(x) = b(x)$ and $\sigma_h(x) = \sigma(x)$, truncated Euler-Maruyama methods \cite{mao2015truncated,li2021strong} featuring $h$-dependent truncations of $b(x)$ and $\sigma(x)$, and semi-implicit schemes such as the split-type backward Euler method \cite{higham2002strong} for which $b_h$ and $\sigma_h$ are obtained by solving an implicit one-step equation.        
    
    In the RPC method, we do not directly describe the distribution of $v_k$; instead, we represent $v_k$ via its orthogonal polynomials, and it is these polynomials that are evolved over time. For each multi-index $\alpha$, let $T_{k,\alpha}(x)$ be the orthonormal polynomial of degree $|\alpha|$ with respect to the distribution of $v_k$, expressed in the monomial basis $\{x^{\beta}\}_{|\beta|<\infty}$. Since orthogonal polynomials depend on the monomial ordering, this set is not unique; we fix the ordering to be consistent with the multi-index ordering. We write $T_{k,\alpha}(x)$ for the polynomial function and $T_{k,\alpha}(v_k)$ for the induced random variable. Denote by $\Gamma_{k,\alpha\beta\gamma}:= \mathbb{E}\left[T_{k,\alpha}(v_k)T_{k,\beta}(v_k)T_{k,\gamma}(v_k)\right]$ the expectation of the triple product of orthonormal polynomials with respect to the distribution of $v_k$. 
    
    Assume we have $\{T_{k,\alpha}(x)\}_{|\alpha|<\infty}$ and $\{\Gamma_{k,\alpha\beta\gamma}\}_{|\alpha|<\infty}$ at time $t_k$. The approximation $v_{k+1}$ is then projected onto the space spanned by $\{T_{k,\alpha}(v_k),z_k T_{k,\alpha}(v_k)\}_{|\alpha|<\infty}$. Corresponding to the Euler-type scheme~\eqref{Euler type}, we write the projection scheme as
    \begin{equation}\label{vector projection}
        \begin{aligned}
            v_{k+1} &= v_k + h\sum_{|\alpha|<\infty} b_{k,\alpha}  T_{k,\alpha}(v_k) + \sqrt{h} \sum_{|\alpha|<\infty } \sigma_{k,\alpha}  z_k  T_{k,\alpha}(v_k),
        \end{aligned}
    \end{equation}
    where the projection coefficient vectors $b_{k,\alpha} \in \mathbb{R}^d$, and the projection coefficient matrices $\sigma_{k,\alpha} \in \mathbb{R}^{d \times m}$ represent the drift and diffusion components, respectively. 
     
    Comparing~\eqref{vector projection} with~\eqref{Euler type}, these coefficients are determined by the Galerkin projection of $b_h(v_k)$ and $\sigma_h(v_k)$ onto the orthonormal basis, where the projection is defined by computing the expectation with respect to the distribution of $v_k$:
    \begin{align}
        b_{k,\alpha}^{(i)} &= \mathbb{E} \left[b_h^{(i)}(v_k)  T_{k,\alpha}(v_k)\right], \quad 1 \leq i \leq d, \label{b equation}\\
         \sigma_{k,\alpha}^{(i,j)} &= \mathbb{E} \left[\sigma_h^{(i,j)}(v_k)  T_{k,\alpha}(v_k)\right], \quad 1 \leq i \leq d, 1 \leq j \leq m.\label{sigma equation}
    \end{align}
    To ensure that $v_{k+1}$ admits a polynomial chaos representation, we also need to expand $v_k$ itself in the orthonormal basis. Since $v_k^{(i)} - \mathbb{E}[v_k^{(i)}]$ is linear in $v_k$, it lies exactly in the span of $\{T_{k,\alpha} : |\alpha|=1\}$, and the expansion coefficients are given explicitly by the covariance structure:
    \begin{equation}
    \label{v projection}
        v^{(i)}_k = \mathbb{E}[v^{(i)}_k]
        + \sum_{j=1}^{d} (\mathbf{L}_k)_{ij}  T_{k,e_j}(v_k), 
        \quad 1 \leq i \leq d,
    \end{equation}
    where $\mathbf{L}_k$ is the Cholesky factor of $\mathrm{Cov}(v_k) = \mathbf{L}_k \mathbf{L}_k^T$ and $e_j$ is the $j$-th standard unit multi-index. Substituting \eqref{b equation} \eqref {sigma equation} \eqref{v projection} into~\eqref{vector projection}, the numerical solution $v_{k+1}$ is thus entirely expressed as a polynomial chaos expansion in terms of $v_k$. 

    Note that for any two random variables $w_1$ and $w_2$ with polynomial chaos expansions
    \[
    w_1 = \sum_{|\alpha|<\infty} w_{1,\alpha} T_{k,\alpha}(v_k), \qquad 
    w_2 = \sum_{|\alpha|<\infty} w_{2,\alpha} T_{k,\alpha}(v_k),
    \]
    their expectation of the product can be expressed directly in terms of the expansion coefficients:
    \begin{equation}
        \mathbb{E}[w_1 w_2] = \sum_{|\alpha|<\infty} w_{1,\alpha}\circ w_{2,\alpha}.
    \end{equation} 
    Moreover, the polynomial chaos expansion of the product $w_1 w_2$ is obtained via the triple products $\Gamma_{k,\alpha\beta\gamma}$:
    \begin{equation}
        w_1 w_2 = \sum_{|\alpha|<\infty} \left( \sum_{|\beta|,|\gamma|<\infty} w_{1,\beta} \circ w_{2,\gamma} \Gamma_{k,\alpha\beta\gamma} \right) T_{k,\alpha}(v_k).
    \end{equation}
    where the products between $w_{1,\alpha}$ and $w_{1,\beta}$ denote the Hadamard product (\ref{Hadamard product}). Consequently, given $\{T_{k,\alpha}(x)\}_{|\alpha|<\infty}$, $\{\Gamma_{k,\alpha\beta\gamma}\}_{|\alpha|,|\beta|,|\gamma|<\infty}$, and the polynomial chaos expansion of $v_{k+1}$ in terms of $v_k$, we can compute any polynomial expectations of $v_{k+1}$. Since $z_k$ is independent of $v_k$, the terms involving $z_k$ can be handled separately, and the moments of $z_k$ integrated analytically. 
    This enables us to update the orthonormal polynomials $\{T_{k+1,\alpha}(x)\}_{|\alpha|<\infty}$ via the Gram-Schmidt orthogonalization procedure, and subsequently compute the corresponding triple products $\{\Gamma_{k+1,\alpha\beta\gamma}\}_{|\alpha|,|\beta|,|\gamma|<\infty}$. If $v_{k+1}$ is determinate, which we always assume, the orthonormal polynomials $\{T_{k+1,\alpha}(x)\}_{|\alpha|<\infty}$ uniquely determine this distribution. 

    Such a procedure can be applied recursively, starting from the known initial distribution of $v_0$. At each step, the pair $(\{T_{k,\alpha}\}, \{\Gamma_{k,\alpha\beta\gamma}\})$ enables us to compute the polynomial expectations of $v_{k+1}$, which in turn determine $(\{T_{k+1,\alpha}\}, \{\Gamma_{k+1,\alpha\beta\gamma}\})$. Under the assumption that all $v_k$ are determinate, this recursive construction propagates the exact distribution of the numerical solution at each time step. This recursive process is illustrated in Figure \ref{fig:projection}. 
    
    \begin{figure}[H]
        \centering
        \begin{tikzpicture}
            \draw[line width= 0.6 pt] (-0.5,0) -- (9.5,0);
            \foreach \x in {0,4.5,9} {
                \draw[line width=0.6pt] (\x,-0.15) -- (\x,0.15);
            }
            
            \node at (0, -1) {$v_{k-1}$};
            \node at (4.5, -1) {$v_{k}$};
            \node at (9, -1) {$v_{k+1}$};
            
           \node at (0, -0.5) {$(T_{k-1,\alpha},\; \Gamma_{k-1,\alpha\beta\gamma})$};
            \node at (4.5, -0.5) {$(T_{k,\alpha},\; \Gamma_{k,\alpha\beta\gamma})$};
            \node at (9, -0.5) {$(T_{k+1,\alpha},\; \Gamma_{k+1,\alpha\beta\gamma})$};
            
            \draw[->] (0,-1.3) to[out=-20,in=-160] (4.5,-1.3) ;
            \draw[->] (4.5,-1.3) to[out=-20,in=-160] (9,-1.3);
            
            \node at (2,-2) {\scriptsize$ \text{span}\{ T_{k-1,\alpha}(v_{k-1}),  z_{k-1}T_{k-1,\alpha}(v_{k-1})\} $};
            \node at (7,-2) {\scriptsize$ \text{span}\{ T_{k,\alpha}(v_k), z_kT_{k,\alpha}(v_k)\} $};
        \end{tikzpicture}
        \caption{Propagation of Euler-type projection procedure}
        \label{fig:projection}
    \end{figure}

\subsection{Implementation}\label{sec:numericalmethods}
    In this section, we describe the numerical implementation of our RPC method. In practice, the evolved polynomials are truncated to a finite total degree $|\alpha| \leq L$. Assume the orthonormal polynomials at $t_k$ are $\{\hat{T}_{k,\alpha}(x)\}_{|\alpha|\leq L}$ and the corresponding triple products are $\{\hat{\Gamma}_{k,\alpha\beta\gamma}\}_{|\alpha|,|\beta|,|\gamma|\leq L}$. We approximate all expectations from time step $k$ to time step $k+1$ as follows: the product of two polynomial chaos expansions $\hat{w}_1 = \sum_{|\alpha|\leq L} \hat{w}_{1,\alpha} \hat{T}_{k,\alpha}$ and $\hat{w}_2 = \sum_{|\alpha|\leq L} \hat{w}_{2,\alpha} \hat{T}_{k,\alpha}$ can be approximated by projecting onto the basis $\{ \hat{T}_{k,\alpha}(x) \}_{|\alpha| \leq L}$:
        \begin{equation}
        \label{product calculation}
        \begin{aligned}
              w_1w_2\approx \Pi_{k,L}(w_1w_2)  = \sum_{|\alpha| \leq L}\left( \sum_{|\beta|, |\gamma|\leq L} \hat{w}_{1,\beta} \circ \hat{w}_{2,\gamma} \hat{\Gamma}_{k,\alpha\beta\gamma}\right)  \hat{T}_{k,\alpha},
        \end{aligned}
        \end{equation}
    and when computing the expectation of the product with another polynomial $\hat{w}_3 = \sum_{|\alpha|\leq L} \hat{w}_{3,\alpha} \hat{T}_{k,\alpha}$, the expectation can be approximated by
    \begin{equation}
    \label{approximated expectation}
    \mathcal{E}[\hat{w}_1\hat{w}_2\hat{w}_3] := \sum_{|\alpha|, |\beta|, |\gamma|\leq L} \hat{w}_{1,\alpha} \circ \hat{w}_{1,\beta}  \circ \hat{w}_{1,\gamma}  \hat{\Gamma}_{k,\alpha\beta\gamma}.
    \end{equation}
    Here, we use the notation $\mathcal{E}$ to represent the approximating expectation computed by projection coefficients and triple products. 

    In the original equations \eqref{b equation} and \eqref{sigma equation}, the polynomial chaos coefficients are defined via projection expectations with respect to the distribution of the current state $v_k$. Since this distribution is not explicitly available in the numerical implementation of the RPC method, an alternative strategy is required to obtain these coefficients. A natural approach is to seek a distribution that matches, at least approximately, the given orthonormal polynomials $\{\hat{T}_{k,\alpha}(x)\}_{|\alpha|\leq L}$ and the associated triple products $\{\hat{\Gamma}_{k,\alpha\beta\gamma}\}_{|\alpha|,|\beta|,|\gamma|\leq L}$, and then compute the Galerkin projections with respect to this distribution \cite{musco2025sharper,kong2017spectrum}. 
    
    In this paper, we approximate the numerical drift $b_h$ and diffusion $\sigma_h$ using polynomials $p_b$ and $p_\sigma$. This avoids explicit expectation evaluations, reducing the required projection coefficients to finite linear combinations of polynomial products, all of which are computable entirely within the polynomial chaos framework. 
    
    The solution at time step $t_{k+1}$ is then expressed as a linear combination of 
    $\{\hat{T}_{k,\alpha}, z_k^{(j)}\hat{T}_{k,\alpha}\}_{|\alpha|\leq L, 1\leq j\leq m}$. This representation enables us to apply \eqref{product calculation} and \eqref{approximated expectation} to approximate the required polynomial expectations at time $t_{k+1}$. From these, we can construct the approximate orthonormal polynomials $\{\hat{T}_{k+1,\alpha}\}_{|\alpha|\leq L}$ via the Gram-Schmidt procedure and then compute the associated triple products $\{\hat{\Gamma}_{k+1,\alpha\beta\gamma}\}_{|\alpha|,|\beta|,|\gamma|\leq L}$, thereby completing the iteration and advancing to the next time step. Different strategies for organizing and propagating these polynomial expectations lead to different implementations of the RPC method. See Algorithm \ref{algorithm3}, referred to as pRPC, implemented from the orthonormal polynomial perspective, and Algorithm \ref{algorithm}, referred to as mRPC, from the moment perspective, respectively.

\subsubsection{Implementation from the orthogonal polynomial  perspective (pRPC)}
   \label{sec:poly_implementation}

   We first describe the numerical implementation of RPC from the orthogonal polynomial perspective. This approach directly evolves the orthonormal basis and its associated algebraic structures. 

    Given the orthonormal polynomial basis functions $\{T_{k,\alpha}(v_k)\}$ and the triple products $\{\Gamma_{k,\alpha\beta\gamma}\}$, we compute the projection coefficients $b_{k,\alpha}$ and $\sigma_{k,\alpha}$ through equations (\ref{b equation}) and (\ref{sigma equation}).  The key challenge is then to evolve both the orthonormal basis and the triple products dynamically to the next time step. To achieve this, we apply an It\^o-Taylor expansion to the functions ${T}_{k,\alpha}(v_{k+1}) {T}_{k,\beta}(v_{k+1})$ and ${T}_{k,\alpha}(v_{k+1}) {T}_{k,\beta}(v_{k+1}) {T}_{k,\gamma}(v_{k+1})$ along the dynamics~\eqref{Euler type}. Specifically, if we truncate at $\mathcal{O}(h)$ to match the form of the Euler scheme, the inner products can be approximated by
\begin{equation}\label{grammatrix}
  \begin{aligned}
   \mathbb{E} [{T}_{k,\alpha}(v_{k+1}) 
      {T}_{k,\beta}(v_{k+1})]
    &\approx \delta_{\alpha\beta}
      + h \sum_{i=1}^{d}
        \mathbb{E} \left[
          \frac{\partial}{\partial v^{(i)}}
           ({T}_{k,\alpha} {T}_{k,\beta})
            b^{(i)}
        \right] 
      + \frac{h}{2}\sum_{i,l=1}^{d}\sum_{j=1}^{m}
        \mathbb{E} \left[
          \frac{\partial^2}{\partial v^{(i)}\partial v^{(l)}}
           ({T}_{k,\alpha} {T}_{k,\beta})
           \sigma^{(i,j)} \sigma^{(l,j)}
        \right],
  \end{aligned}
\end{equation}
where $\delta_{\alpha\beta}$ is the Kronecker delta arising from the orthonormality of ${T}_{k,\alpha}$, and all polynomial arguments are evaluated at $v_k$. Similarly, the triple products evolve according to
\begin{equation}\label{tripleproduct}
  \begin{aligned}
    &\mathbb{E} [{T}_{k,\alpha}(v_{k+1}) {T}_{k,\beta}(v_{k+1}) {T}_{k,\gamma}(v_{k+1})] \\
    &\approx \mathbb{E} [{T}_{k,\alpha}(v_k) {T}_{k,\beta}(v_k) {T}_{k,\gamma}(v_k)] + h\sum_{i=1}^{d}
        \mathbb{E} \left[
          \frac{\partial}{\partial v^{(i)}}
            ({T}_{k,\alpha} {T}_{k,\beta} 
             {T}_{k,\gamma}) b^{(i)}
        \right] \\
    &\quad + \frac{h}{2}\sum_{i,l=1}^{d}\sum_{j=1}^{m}
        \mathbb{E} \left[
          \frac{\partial^2}{\partial v^{(i)}\partial v^{(l)}}
            ({T}_{k,\alpha} {T}_{k,\beta} 
             {T}_{k,\gamma}) 
          \sigma^{(i,j)} \sigma^{(l,j)}
        \right].
  \end{aligned}
\end{equation}
Here, the symbol $\approx$ accounts exclusively for the It\^o--Taylor truncation at $\mathcal{O}(h)$, and the expectations $\mathbb{E}[\cdot]$ on the right-hand side are exact with respect to the distribution of $v_k$. 
  
In our pRPC algorithm, since the exact expectations and the orthonormal polynomials with respect to $v_k$ are not available, we denote by $\{T^{\pRPC}_{k,\alpha}(x)\}_{|\alpha|\leq L}$ and $\{\Gamma^{\pRPC}_{k,\alpha\beta\gamma}\}_{|\alpha|,|\beta|,|\gamma|\leq L}$ the corresponding orthonormal polynomials and triple products at time step $k$. We approximate the numerical drift $b_h$ and diffusion $\sigma_h$ by polynomials $p_b$ and $p_\sigma$, respectively. The resulting polynomial drift and diffusion terms can then be directly represented in the basis $\{T^{\pRPC}_{k,\alpha}(x)\}_{|\alpha|\leq L}$. Every $\mathbb{E}[\cdot]$ on the right-hand side of~\eqref{grammatrix} and~\eqref{tripleproduct} is replaced by the approximate expectation $\mathcal{E}[\cdot]$ defined in \eqref{approximated expectation}, evaluated via the polynomial chaos arithmetic at step $k$ using
$\{T^{\pRPC}_{k,\alpha}(x)\}_{|\alpha|\leq L}$ and $\{\Gamma^{\pRPC}_{k,\alpha\beta\gamma}\}_{|\alpha|,|\beta|,|\gamma|\leq L}$ and the multiplication rule~\eqref{product calculation}, leading to
\begin{equation}\label{grammatrix_RPC}
  \begin{aligned}
    \Hb^{\pRPC}_{k,\alpha \beta}
    &= \delta_{\alpha\beta} + h \sum_{i=1}^{d}
        \mathcal{E} \left[
          \frac{\partial}{\partial v^{(i)}}
           (T^{\text{pRPC}}_{k,\alpha} T^{\text{pRPC}}_{k,\beta})
           p_b^{(i)}
        \right] + \frac{h}{2}\sum_{i,l=1}^{d}\sum_{j=1}^{m}
        \mathcal{E} \left[
          \frac{\partial^2}{\partial v^{(i)}\partial v^{(l)}}
           (T^{\text{pRPC}}_{k,\alpha} T^{\text{pRPC}}_{k,\beta})
           p_\sigma^{(i,j)}p_\sigma^{(l,j)}
        \right],
  \end{aligned}
\end{equation}
and the approximated triple products evolve according to
\begin{equation}\label{tripleproduct_RPC}
  \begin{aligned}
  \widetilde{\Gamma}^{\pRPC}_{k,\alpha \beta \gamma} &= \Gamma^{\pRPC}
        _{k,\alpha \beta \gamma} 
    + h\sum_{i=1}^{d}
        \mathcal{E} \left[
          \frac{\partial}{\partial v^{(i)}}
           (T^{\text{pRPC}}_{k,\alpha} T^{\text{pRPC}}_{k,\beta} 
            T^{\text{pRPC}}_{k,\gamma}) p_b^{(i)}
        \right] \\&+ \frac{h}{2}\sum_{i,l=1}^{d}\sum_{j=1}^{m}
        \mathcal{E} \left[
          \frac{\partial^2}{\partial v^{(i)}\partial v^{(l)}}
           (T^{\text{pRPC}}_{k,\alpha} T^{\text{pRPC}}_{k,\beta} 
             T^{\text{pRPC}}_{k,\gamma}) 
           p_\sigma^{(i,j)} p_\sigma^{(l,j)}
        \right].
  \end{aligned}
\end{equation}

 After evolution, we apply Gram-Schmidt orthogonalization to $\{T^{\text{pRPC}}_{k,\alpha}(x)\}_{|\alpha| \leq L}$ with respect to the evolved inner product matrix $\Hb^{\pRPC} = (\Hb^{\pRPC}_{k,\alpha \beta})_{|\alpha|,|\beta| \leq L}$ to obtain the approximated orthonormal basis $\{T^{\text{pRPC}}_{k+1,\alpha}(x)\}_{|\alpha| \leq L}$ for $v_{k+1}$.  This procedure is equivalent to the Cholesky decomposition $\boldsymbol{L}_k\Hb^{\pRPC}_k\boldsymbol{L}_k^T = \Ib $. The new orthonormal basis $\{T^{\text{pRPC}}_{k+1,\alpha}\}_{|\alpha| \leq L}$ at step $k+1$ is then defined by
\begin{equation}
\label{new poly}
    T^{\text{pRPC}}_{k+1,\alpha}(x) = \sum_{|\beta| \leq L} \Lb_{k,\alpha\beta} T^{\text{pRPC}}_{k,\beta}(x).
\end{equation}
The evolved triple products are then transformed to the new basis via 
\begin{equation} \label{triple transform}
    \Gamma^{\pRPC}_{k+1,\alpha\beta\gamma} = \sum_{|\mu|,|\nu|,|\eta| \leq L} \Lb_{k,\alpha\mu} \Lb_{k,\beta\nu} \Lb_{k,\gamma\eta}      \widetilde{\Gamma}^{\pRPC}_{k,\mu \nu \eta}.
\end{equation} 
We summarize this implementation in Algorithm \ref{algorithm3}.

\begin{algorithm}[H]
    \caption{pRPC for d-dimensional SDEs using Euler-type scheme}
    \begin{algorithmic}[1]
    \STATE{\textbf{Input:} SDE parameters $(b, \sigma)$, time domain $[0,T] = \cup_{k=0}^{n-1}[t_{k},t_{k+1}]$ with time step $h$ and $t_k=kh$, initial condition $m_{0,\eta}=\mathbb{E}[v_0^\eta]$ for $|\eta| \leq 2L$, polynomial degree $L$, polynomials approximation $p_b$ and $p_\sigma$}
    \IF{$k = 0$}
    \STATE{Construct initial orthogonal polynomials $\{T^{\text{pRPC}}_{0,\alpha}(v_0\}_{|\alpha| \leq L}$ from moments $\{m_{0,\eta}\}_{|\eta| \leq 2L}$ using Gram-Schmidt}
    \STATE{Compute initial triple products $\{\Gamma^{\pRPC}_{0,\alpha\beta\gamma}\}_{|\alpha|,|\beta|,|\gamma|\leq L}$}
    \ENDIF
    \WHILE{$k = 0$ to $n$}
    \STATE{Compute the approximated projection coefficients $b_{k,\alpha}$ and $\sigma_{k,\alpha}$ by representing $p_b(x)$ and $p_\sigma(x)$ under the basis $\{T_{k,\alpha}^{\pRPC}(x)\}_{|\alpha|\leq L}$}
    
    \STATE{Evolve the approximated triple product $\Gamma^{\pRPC}_{k,\alpha \beta \gamma}$  via~\eqref{tripleproduct_RPC}}

    \STATE{Evolve the approximated inner products $\Hb^{\pRPC}_{k,\alpha\beta}$ via~\eqref{grammatrix_RPC}}

    \STATE{Construct new orthogonal polynomials $\{T^{\text{pRPC}}_{k+1,\alpha}(x)\}_{|\alpha| \leq L}$ from $\{T^{\text{pRPC}}_{k,\alpha}(x)\}_{|\alpha| \leq L}$ using Gram-Schmidt}
    
    \STATE{Transform $\widetilde{\Gamma}^\pRPC_{k,\alpha \beta \gamma}$ to $\Gamma^{\pRPC}_{k+1,\alpha\beta\gamma}$ for $|\alpha|, |\beta| , |\gamma| \leq L$ as \eqref{triple transform}}

    \STATE{$t_{k+1} \gets t_k + h$}
    \STATE{$k \gets k + 1$}
    \ENDWHILE
    
    \STATE{\textbf{Output:} Orthogonal polynomial representation of $u_n$ at final time $T=t_n$}
    \end{algorithmic}
    \label{algorithm3}
    \end{algorithm}

\subsubsection{Implementation from the moment perspective}
\label{sec:moment_implementation}

In Algorithm~\ref{algorithm3}, the computational cost of directly evolving $\mathcal{O}(\binom{d+L}{L}^3)$ triple products can be expensive in high dimensions. We now present an alternative implementation that directly evolves moments, which offers significant computational savings.

Unlike in Algorithm \ref{algorithm3}, where we directly track the orthonormal basis and triple products via an It\^o-Taylor expansion, here we reconstruct the orthonormal polynomials $\{T_{k,\alpha}(x)\}_{|\alpha|\leq L}$ and compute the triple products from moments. This reconstruction of $\{T_{k,\alpha}(x)\}_{|\alpha|\leq L}$  proceeds by applying Gram-Schmidt orthogonalization to the monomial basis $\{x^\alpha\}_{|\alpha| \leq L}$, for which we require moments up to degree $2L$. Since each polynomial in $\{T_{k,\alpha}(x)\}_{|\alpha|\leq L}$ has degree at most $L$ and is expressed as a linear combination of monomials, the triple product $\{\Gamma_{k,\alpha\beta\gamma}\}_{|\alpha|,|\beta|,|\gamma|< L}$ can involve monomials of degree up to $3L$. Computing the expectation of triple products thus requires moments $\mathbb{E}[v_k^\gamma]$ for $|\gamma| \leq 3L$. Therefore, we propagate moments up to degree $3L$ of each $v_k$ throughout the time stepping. 

 At each time step, since the Euler-type scheme (\ref{Euler type}) ensures the independence of $z_k$ and $v_k$, we can compute the mixed moments of $v_{k+1}$ using the multinomial expansion theorem. For any multi-index $\gamma = (\gamma_1, \cdots, \gamma_d)$ with $|\gamma| \leq 3L$, the mixed moments $\mathbb{E}[v_{k+1}^{\gamma}]$ are given by 
    \begin{equation}
    \label{moment_cal}
    \begin{aligned}
        \mathbb{E}[v_{k+1}^{\gamma}]
        &= \mathbb{E}\!\left[\left(
            v_k
            + h \sum_{|\alpha|\leq L} b_{k,\alpha}  T_{k,\alpha}(v_k)
            + \sqrt{h}\sum_{j=1}^{m} \sum_{|\alpha|\leq L}
              \sigma_{k,\alpha}^{(j)}  T_{k,\alpha}(v_k)  z_k^{(j)}
          \right)^{\!\gamma}\right] \\[4pt]
        &= \sum_{\substack{\mu+\nu+\sum_{j=1}^{m}\eta^{(j)}=\gamma}}
\frac{\gamma!}{\nu! \mu! \prod_{j=1}^m \eta^{(j)}!}
          h^{|\nu|+\frac{1}{2}\sum_{j=1}^{m}|\eta^{(j)}|}
          \prod_{j=1}^{m}\mathbb{E}\!\left[(z_k^{(j)})^{|\eta^{(j)}|}\right] \\[2pt]
        &\qquad\quad \cdot
          \mathbb{E}\!\left[
            v_k^{\mu}\!
            \left(\sum_{|\alpha|\leq L} b_{k,\alpha}  T_{k,\alpha}(v_k)\right)^{\!\nu}
            \prod_{j=1}^{m}
            \left(\sum_{|\alpha|\leq L}
              \sigma_{k,\alpha}^{(j)}  T_{k,\alpha}(v_k)\right)^{\!\eta^{(j)}}
          \right],
    \end{aligned}
\end{equation}
    where $\sigma_{k,\alpha }^{(j)}= (\sigma_{k,\alpha}^{(1,j)}, \cdots, \sigma_{k,\alpha}^{(d,j)})^T$ denotes the $j$-th column vector of the coefficient matrix $\sigma_{k,\alpha}$, $z_k = (z_k^{(1)},\cdots,z_k^{(m)})$, and $\mu, \nu, \eta^{(1)}, \cdots, \eta^{(m)} \in \mathbb{N}^d$ are multi-indices.

In the mRPC algorithm, we approximate the numerical drift $b_h$ and diffusion $\sigma_h$ by polynomials $p_b$ and $p_\sigma$, respectively. Let $\{m_{k,\gamma}^{\mRPC}\}_{|\gamma|\leq 3L}$ denote the set of propagated moments at time step $k$. From $\{m^{\mRPC}_{k,\gamma}\}_{|\gamma|\leq 3L}$, we reconstruct the orthonormal polynomials $\{T^{\text{mRPC}}_{k,\alpha}(x)\}_{|\alpha|\leq L}$ via the Gram-Schmidt procedure and then compute the corresponding triple products $\{\Gamma^{\mRPC}_{k,\alpha\beta\gamma}\}_{|\alpha|,|\beta|,|\gamma|\leq L}$. We then compute the moments $\{m^{\mRPC}_{k+1,\gamma}\}_{|\gamma|\leq 3L}$ using \eqref{moment_cal}, where $\mathbb{E}[\cdot]$ is replaced by $\mathcal{E}[\cdot]$. The polynomial expansions of $b_h$ and $\sigma_h$ are replaced, respectively, by the expansions of $p_b$ and $p_\sigma$ in the basis $\{T_{k,\alpha}^{\mRPC}\}_{|\alpha|\leq L}$. The evaluation of \eqref{moment_cal} at step $k$ then proceeds via polynomial chaos arithmetic, using $\{T^{\mRPC}_{k,\alpha}(x)\}_{|\alpha|\leq L}$, the triple products $\{\Gamma^{\mRPC}_{k,\alpha\beta\gamma}\}_{|\alpha|,|\beta|,|\gamma|\leq L}$, and the multiplication rule \eqref{product calculation}. This procedure can be applied recursively.

To reduce complexity and to remain consistent with the order of accuracy in \eqref{Euler type}, we neglect higher-order terms of $\mathcal{O}(h^\alpha)$ for $\alpha \geq 2$ in \eqref{moment_cal}. Retaining the terms with $|\nu| + \frac{1}{2}\sum_{j=1}^{m} |\eta^{(j)}| = 0, 1$, we obtain 
\begin{equation}
\label{eqfirst_order}
\begin{aligned}
m_{k+1,\gamma}^{\mRPC}
&= m_{k,\gamma}^{\mRPC} 
 + h \sum_{\substack{i=1 \\ \gamma_i \ge 1}}^{d} \gamma_i \, \mathcal{E}\!\left[ v_k^{\gamma - \mathbf{e}_i} p_b^{(i)} \right]   + h \sum_{j=1}^{m} \sum_{\substack{i,l=1 \\ \gamma_i \ge 1, \gamma_l \ge 1}}^{d} 
   \frac{\gamma_i (\gamma_l - \delta_{il})}{1 + \delta_{il}}\mathcal{E}\left[ v_k^{\gamma - \mathbf{e}_i-\mathbf{e}_l} p_{\sigma}^{(i,j)}p_{\sigma}^{(l,j)}\right] ,
\end{aligned}
\end{equation}
where $\mathbf{e}_i = (0,\dots,1,\dots,0)$ denotes the $i$-th standard basis vector, $\delta_{il}$ is the Kronecker delta satisfying $\delta_{il} = 1$ if $i = l$ and $\delta_{il} = 0$ otherwise, $\gamma - \mathbf{e}_i$ denotes the multi-index obtained by subtracting $1$ from the $i$-th component of $\gamma$,  and more generally $\gamma - e_i - e_l$  subtracts 
 $1$ from the $i$-th and $l$-th component of $\gamma$. This choice is consistent with the temporal order retained in our Euler-type numerical scheme. For extensions of the RPC framework to higher-order schemes, see Section~\ref{higher order}.

We summarize this implementation in Algorithm~\ref{algorithm}.

    \begin{algorithm}
    \caption{mRPC for d-dimensional SDEs using Euler-type discretizations}
    \begin{algorithmic}[1]
    \STATE{\textbf{Input:} SDE parameters $(b, \sigma)$, time domain $[0,T] = \cup_{k=0}^{n-1}[t_{k},t_{k+1}]$ with time step $h$ and $t_k=kh$, initial moments $m_{0,\eta}=\mathbb{E}[v_0^\eta]$ for $|\eta| \leq 3L$, polynomial degree $L$, polynomials approximation $p_b$ and $p_\sigma$}
    \WHILE{$k = 0$ to $n$}
    \STATE{Construct approximate orthonormal polynomials
      $\{T^{\text{mRPC}}_{k,\alpha}(v_k)\}_{|\alpha|\leq L}$ from
      $\{m^{\mRPC}_{k,\gamma}\}_{|\gamma|\leq 3L}$ via Gram--Schmidt}

    \STATE{Compute approximate triple products
      $\{\Gamma^{\mRPC}_{k,\alpha\beta\gamma}\}_{|\alpha|,|\beta|,|\gamma|\leq L}$ from
      $\{m^{\mRPC}_{k,\gamma}\}_{|\gamma|\leq 3L}$}
    \STATE{Compute the approximated projection coefficients $b_{k,\alpha}(x)$ and $\sigma_{k,\alpha}(x)$ by representing $p_b$ and $p_\sigma$ under the basis $\{T_{k,\alpha}^{\mRPC}(x)\}_{|\alpha|\leq L}$}

    \WHILE{$|\eta| \leq 3L$}
    \STATE{Compute approximate moments $m^{\mRPC}_{k+1,\gamma}$ for
      $|\gamma|\leq 3L$ via ~\eqref{eqfirst_order}}

    \STATE{Store moment $\{m^{\mRPC}_{k+1,\gamma}\}_{|\gamma|\leq 3L}$ for next iteration}
    \ENDWHILE
    
    \STATE{$t_{k+1} \gets t_k + h$}
    \STATE{$k \gets k + 1$}
    \ENDWHILE
    
    \STATE{\textbf{Output:} Approximate moments
      $\{m^{\mRPC}_{n,\gamma}\}_{|\gamma|\leq 3L}$ at final time~$T=t_n$,
      or full moment history
      $\{m^{\mRPC}_{k,\gamma}\}_{0\leq k\leq n, |\gamma|\leq 3L}$}
    \end{algorithmic}
    \label{algorithm}
    \end{algorithm}

\begin{Remark}
    Choosing between pRPC (Algorithm~\ref{algorithm3}) and mRPC (Algorithm~\ref{algorithm}) involves a trade-off between numerical stability and efficiency. pRPC is inherently well-conditioned, as it applies Gram-Schmidt to an inner product matrix $\hat{H}_{k,\alpha \beta} = \delta_{\alpha\beta} + \mathcal{O}(h)$ that represents only a small perturbation of the identity. Conversely, mRPC constructs a Hankel matrix from approximated moments, which may become ill-conditioned for large $L$. However, evolving $\mathcal{O}(\binom{d+3L}{3L})$ moments is much cheaper in computations and storage than evolving $\mathcal{O}(\binom{d+L}{L}^3)$ triple products, especially in high dimensions. Since a small $L$ typically ensures sufficient accuracy without severe ill-conditioning, this substantial computational advantage justifies our primary focus on the mRPC in the remainder of the paper.
\end{Remark}

\begin{Remark} \label{rem:nonMarkovian}
    While this paper focuses on the Markovian SDE~\eqref{original SDE}, the projection technique extends naturally to non-Markovian systems such as stochastic delay differential equations (SDDEs)~\cite{It1964ONSS}:
    \begin{equation}
        du(t) = b\big(u(t),  u(t-\tau)\big) dt
             + \sigma\big(u(t),  u(t-\tau)\big) dW(t),
    \end{equation}
    where $u(t)\in\mathbb{R}^d$, and the drift $b:\mathbb{R}^d\times\mathbb{R}^d\to\mathbb{R}^d$ and diffusion $\sigma:\mathbb{R}^d\times\mathbb{R}^d\to\mathbb{R}^{d\times m}$ 
    depend on both the current and delayed states. Euler-type schemes remain applicable to SDDEs~\cite{MAO2003215, Kuchler2000189}. For such systems, let $\ell=\lfloor\tau/h\rfloor$ denote the discretized delay and let $v(t_{k-\ell})$ be the numerical approximation of $u(t_k-\tau)$. The projection framework then involves projecting $v_{k+1}$ onto
    $\mathrm{span}\{\hat{T}_{k,\alpha}(v_k, v(t_{k-\ell})),
    z_k^{(j)}\hat{T}_{k,\alpha}(v_k, v(t_{k-\ell}))
    :|\alpha|\leq \infty,1\leq j\leq m\}$, where $\hat{T}_{k,\alpha}$ are orthonormal polynomials of total degree $|\alpha|$ with respect to the joint distribution of $(v_k, v(t_{k-\ell}))\in\mathbb{R}^{2d}$. This augmentation of the state space to recover a Markovian structure preserves the RPC framework while accounting for memory effects. A detailed analysis of this extension and its associated complexity analysis is left for future work.
\end{Remark}

    \begin{Remark}
        To simplify, the RPC algorithm and its analysis are presented for autonomous SDEs $(\ref{original SDE})$, as is standard in the literature $\cite{MAO2003215, mao2015truncated, higham2002strong}$. RPC naturally extends to non-autonomous SDE systems with time-inhomogeneous drift $b(t, u)$ and diffusion $\sigma(t, u)$ terms in \eqref{b equation} and \eqref{sigma equation}. In this setting, the projection step \eqref{vector projection} decouples temporal and stochastic components.  
    \end{Remark}
    
    \begin{Remark} 
        Standard choices for approximating polynomials $p_b$ and $p_\sigma$ include Chebyshev polynomials \cite{boyd2001chebyshev} for near-optimal approximation, Bernstein polynomials \cite{phillips2003bernstein} for shape preservation, and splines \cite{de1978practical} for local approximation. For high dimen\-sion\-al problems, tensor product constructions or sparse grid techniques can be used to alleviate the curse of dimensionality \cite{Adocksparse1}.
    \end{Remark}

\subsubsection{RPC framework for general higher-order schemes}
\label{higher order}
The proposed framework extends to general numerical schemes via the strong Itô-Taylor expansion \cite{Kloeden1992}. For a truncation order $\gamma \in \{0.5, 1.0, \dots\}$, let $\mathcal{M}_\gamma$ denote the standard hierarchical set of multi-indices $\beta = (j_1, \ldots, j_l)$. The corresponding coefficient functions $f_\beta$ are generated recursively via $f_\beta^{(i)} = \mathcal{L}^{j_1} \cdots \mathcal{L}^{j_l} u^{(i)}$, using the generalized Itô differential operators $\mathcal{L}^0 = \sum b^{(i)}\partial_{i} + \frac{1}{2}\sum \sigma^{(i,j)}\sigma^{(l,j)}\partial^2_{i,l}$ and $\mathcal{L}^j = \sum \sigma^{(i,j)}\partial_{i}$ for $j=1,\ldots,m$. Omitting the remainder term yields the order-$\gamma$ strong scheme:
\begin{equation}\label{Ito-taylor-expansion}
  v_{k+1} = v_k + \sum_{\beta\in\mathcal{M}_\gamma\setminus\{\emptyset\}} f_\beta(v_k) I_\beta^{(k)},
\end{equation}
where $I_\beta^{(k)}$ denotes the standard multiple stochastic integrals over $[t_k, t_{k+1}]$ \cite{Kloeden1992}.
 
To incorporate this scheme into our RPC framework \eqref{vector projection}, we project each coefficient function $f_\beta$ onto the orthonormal basis $\{{T}_{k,\alpha}(v_k)\}_{|\alpha|\leq \infty}$ at time $t_k$:
\begin{equation}
\label{coeff-projection}
  f_\beta^{(i)}(v_k) = \sum_{|\alpha|\leq \infty}
  c_{k,\alpha}^{(\beta,i)} T_{k,\alpha}(v_k),
  \quad i=1,\ldots,d,
\end{equation}
where the projection coefficients are computed by 
$c_{k,\alpha}^{(\beta,i)} = \mathbb{E}[f_\beta^{(i)}(v_k) T_{k,\alpha}(v_k)]$. Substituting \eqref{coeff-projection} into \eqref{Ito-taylor-expansion}, we obtain the discrete scheme
\begin{equation}
\label{discrete-scheme}
  v_{k+1} = v_k + \sum_{\beta\in\mathcal{M}_\gamma\setminus\{\emptyset\}}
  \left(\sum_{|\alpha|\leq \infty}
  c_{k,\alpha}^{(\beta)} T_{k,\alpha}(v_k)\right) I_\beta^{(k)},
\end{equation}
where $c_{k,\alpha}^{(\beta)} =
(c_{k,\alpha}^{(\beta,1)},\ldots,c_{k,\alpha}^{(\beta,d)})^T\in\mathbb{R}^d$.

This formulation shows that the higher-order Itô-Taylor scheme shares the same structure as the Euler-type scheme in equation (\ref{vector projection}), with the coefficients $\{c^{(\beta)}_{k,\alpha}\}_{\beta \in \mathcal{M}_\gamma, |\alpha| \leq \infty}$ generalizing the roles of $b_{k,\alpha}$ and $\sigma_{k,\alpha}$.  In practice, the orthonormal polynomials are truncated at total degree $L$ and replaced by ${\hat{T}_{k,\alpha}}$, and all expectations are replaced by the approximate expectation $\mathcal{E}[\cdot]$ evaluated using the truncated multiplication rule \eqref{product calculation}, as in the Euler-type setting. The two implementations from Section \ref{sec:poly_implementation} and Section \ref{sec:moment_implementation} extend directly. In pRPC, the Itô-Taylor expansion used to evolve the algebraic structures \eqref{grammatrix_RPC}--\eqref{tripleproduct_RPC} is performed with the generator incorporating all coefficient functions $\{f_\beta\}_{\beta \in \mathcal{M}_\gamma}$ at the appropriate truncation order; in mRPC, the multinomial expansion \eqref{moment_cal} is applied to \eqref{discrete-scheme} and truncated at the corresponding order, with the moments of the multiple stochastic integrals $\{I_\beta^{(k)}\}_{\beta \in \mathcal{M}_\gamma}$ computed analytically.

    As the truncation order $\gamma$ increases, the hierarchical structure of $\mathcal{M}_\gamma$ ensures that new terms are added systematically, which provides a natural framework for incorporating It\^{o}-Taylor schemes of arbitrary order into our RPC method.

    \subsubsection{Sparsity of RPC Algorithm}\label{sec:sparsity}

Let $S = \max\{\deg(p_{b}), 2\deg(p_{\sigma})\}$ denote the maximum degree involved in the evolution equation. In this section, we show that it suffices to evolve and store triple products whose indices belong to the restricted index set
\begin{equation}\label{eq:restricted_set}
  \mathcal{J}_{L,S}
  = \bigl\{(\alpha,\beta,\gamma)
      : |\alpha|,|\beta|,|\gamma|\leq L,
          |\alpha|+|\beta|+|\gamma|\leq J\bigr\},
  \qquad J := 2L+S,
\end{equation}
in the sense that all right-hand sides of \eqref{grammatrix_RPC} and \eqref{tripleproduct_RPC} can be evaluated using only triple products from~$\mathcal{J}_{L,S}$, so that the pRPC evolution is algebraically self-contained on this set. This also implies that in the mRPC algorithm, it suffices to evolve only the moments up to order $2L+S$.

In the pRPC, the right-hand sides of the approximated inner product evolution \eqref{grammatrix_RPC} and the approximated triple product evolution \eqref{tripleproduct_RPC} are computed from the triple products $\Gamma^{\pRPC}_{k,\alpha\beta\gamma}$ via the multiplication rule~\eqref{product calculation}. This requires that the collection of stored triple products is rich enough to evaluate all such expectations. If the full tensor is stored for all $|\alpha|,|\beta|,|\gamma|\leq L$, i.e., with total degree up to $3L$, the system is closed by \eqref{approximated expectation}, but the storage and computational cost scale as $\mathcal{O}\!\bigl(\binom{d+L}{L}^{3}\bigr)$. In the mRPC, the situation is analogous: computing $m^{\mRPC}_{k+1,\gamma}$ via the multinomial expansion \eqref{moment_cal} involves products and powers of polynomial chaos expansions that may exceed degree $L$. If moments up to degree $3L$ (and thereby the full triple product tensor up to total degree $3L$) are maintained, closure is guaranteed, but with the same storage cost.

A natural question is therefore whether both algorithms can be closed on a smaller index set while retaining high accuracy. We begin with the inner product evolution \eqref{grammatrix_RPC} in pRPC, which governs the Gram-Schmidt orthogonalization and is thus the most critical quantity for the accuracy of the orthonormal basis.  The most demanding term in \eqref{grammatrix_RPC}
is the diffusion contribution
\[
  \frac{h}{2}\sum_{i,l=1}^{d}\sum_{j=1}^{m}
  \mathcal{E}\!\left[
    \frac{\partial^{2}}{\partial v^{(i)}\partial v^{(l)}}
      \bigl(T^{\pRPC}_{k,\alpha} T^{\pRPC}_{k,\beta}\bigr)
    p_\sigma^{(i,j)} p_\sigma^{(l,j)}
  \right],
  \qquad |\alpha|,|\beta|\leq L.
\]
Introduce the diffusion covariance polynomial
$\Sigma^{(i,l)}:=\sum_{j=1}^{m}p_\sigma^{(i,j)}p_\sigma^{(l,j)}$, which has
degree at most $S$. The integrand
$\frac{\partial^{2}}{\partial v^{(i)}\partial v^{(l)}}
(T^{\pRPC}_{k,\alpha}T^{\pRPC}_{k,\beta})\cdot\Sigma^{(i,l)}$
has total degree at most
\[
  (|\alpha|+|\beta|-2)+S \leq 2L-2+S < 2L+S = J.
\]
 Therefore, the restricted set $\mathcal J_{L,S}$ in \eqref{eq:restricted_set} already contains all the information required to evaluate the inner product evolution exactly within the truncated polynomial arithmetic. This guarantees that the Gram-Schmidt orthogonalization, and hence the orthonormal basis at the next time step, is uniquely determined by the restricted set. 

The analysis for \eqref{tripleproduct_RPC} is more delicate because the target quantity $\Gamma^{\pRPC}_{k+1,\alpha\beta\gamma}$ itself carries three free indices with $|\alpha|+|\beta|+|\gamma|\leq J$. The following lemma shows that these fourth-order expectations can nevertheless be decomposed into bilinear expressions in triple products, all of which remain within~$\mathcal{J}_{L,S}$. See Appendix $\ref{proof of proposition 3.1}$ for a detailed proof.

\begin{Lemma}
\label{prop:closure}
Let $J = 2L+S$. For any $(\alpha,\beta,\gamma)\in\mathcal{J}_{L,S}$, the right-hand side of the triple product evolution~\eqref{tripleproduct_RPC} can be evaluated using only triple products $\{\Gamma^{\pRPC}_{k,\mu\nu\eta}\}_{(\mu,\nu,\eta)\in\mathcal{J}_{L,S}}$. Consequently, the restricted index set~$\mathcal{J}_{L,S}$ is closed under the pRPC evolution.
\end{Lemma}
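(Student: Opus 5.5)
The plan is to reduce every expectation on the right-hand side of \eqref{tripleproduct_RPC} to sums of products of two triple products. This works by splitting each integrand into two factors, each a product of two polynomials, and inserting the truncated Parseval identity $\mathcal{E}[fg]=\sum_{|\eta|\le L} f_\eta g_\eta$. Each factor's coefficients are then single triple products.

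\textbf{Step 1: splitting the integrands.} Expand the derivatives by the Leibniz rule. The drift term produces summands such as $(\partial_i T^{\pRPC}_{k,\alpha})\,T^{\pRPC}_{k,\beta}T^{\pRPC}_{k,\gamma}\,p_b^{(i)}$. The diffusion term produces summands like $(\partial^2_{il}T^{\pRPC}_{k,\alpha})T^{\pRPC}_{k,\beta}T^{\pRPC}_{k,\gamma}\Sigma^{(i,l)}$ and $(\partial_iT^{\pRPC}_{k,\alpha})(\partial_lT^{\pRPC}_{k,\beta})T^{\pRPC}_{k,\gamma}\Sigma^{(i,l)}$, where $\Sigma^{(i,l)}$ is the diffusion covariance polynomial of degree $\le S$ introduced above. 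Every derivative $\partial T^{\pRPC}_{k,\alpha}$ is a polynomial of degree $\le |\alpha|-1$. It is therefore an exact finite combination of $T^{\pRPC}_{k,\nu}$ with $|\nu|\le|\alpha|-1$, with coefficients computable from the monomial representation alone, without any $\Gamma$. Likewise $p_b^{(i)}$ and $\Sigma^{(i,l)}$ are expanded as $\sum_\lambda c_\lambda T^{\pRPC}_{k,\lambda}$. After these expansions, each summand becomes a linear combination of four-fold quantities $\mathcal{E}[T_\mu T_\nu T_\rho T_\lambda]$. Here $\lambda$ carries the degree of $p_b$ or $\Sigma$, and $|\mu|+|\nu|+|\rho|\le|\alpha|+|\beta|+|\gamma|-1$ (respectively $-2$ for the second-order terms).

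\textbf{Step 2: evaluating the four-fold quantities.} Evaluate each one through \eqref{product calculation} as
\[
\mathcal{E}[T_\mu T_\nu T_\rho T_\lambda]=\sum_{|\eta|\le L}\Gamma^{\pRPC}_{k,\mu\nu\eta}\,\Gamma^{\pRPC}_{k,\rho\lambda\eta}.
\]
The pairing is chosen so that the polynomial coefficient index $\lambda$ is grouped with the lowest-degree index among $\mu,\nu,\rho$.

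\textbf{Step 3: degree bookkeeping.} Next show that all triple products which actually contribute lie in $\mathcal{J}_{L,S}$. The tool is the structural vanishing property $\Gamma^{\pRPC}_{k,\mu\nu\eta}=0$ whenever one index degree exceeds the sum of the other two; for instance, $|\eta|>|\mu|+|\nu|$. This holds by degree and orthogonality for exact orthonormal polynomials. For the algorithm, I would establish it by induction on $k$: it holds at $k=0$, and it is preserved because the Gram--Schmidt map \eqref{new poly} is lower triangular in degree, so \eqref{triple transform} preserves degree filtrations. With this property the $\eta$-sum collapses to $|\eta|\le\min\{L,\,|\mu|+|\nu|,\,|\rho|+|\lambda|\}$. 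The remaining task is the inequalities $|\mu|+|\nu|+|\eta|\le J$ and $|\rho|+|\lambda|+|\eta|\le J$. These should follow by bounding $|\eta|$ by the smaller pair sum. Use $|\lambda|\le S$, each index $\le L$, and $|\alpha|+|\beta|+|\gamma|\le J$; the derivative drops the degree by at least one, compensating part of the $S$.

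\textbf{Expected obstacle and fallback.} I expect Step 3 to be the main obstacle, specifically the drift term when $|\alpha|+|\beta|+|\gamma|$ is near $J$ and $\deg p_b=S$. There the crude bound $|\mu|+|\nu|+|\rho|+|\lambda|\le J-1+S$ exceeds $J$, so the argument must exploit both the choice of pairing and the vanishing property in the $\eta$-sum simultaneously. I would first try a case split on which of $|\beta|,|\gamma|$ is smallest, pairing that one with $\lambda$. If a gap remains, the fallback is to show that the offending terms are multiplied by coefficients of $\Gamma$ that vanish by the same degree argument.

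Once every right-hand side of \eqref{tripleproduct_RPC} for $(\alpha,\beta,\gamma)\in\mathcal{J}_{L,S}$ uses only entries in $\mathcal{J}_{L,S}$, closure follows. The inner-product step was already shown above to be determined by $\mathcal{J}_{L,S}$. The transformation \eqref{triple transform} maps the set into itself, because $\Lb_k$ is degree lower triangular, so $\Gamma^{\pRPC}_{k+1,\alpha\beta\gamma}$ involves only $\widetilde\Gamma^{\pRPC}_{k,\mu\nu\eta}$ with $|\mu|\le|\alpha|$, $|\nu|\le|\beta|$ and $|\eta|\le|\gamma|$.
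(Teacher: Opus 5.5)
There is a genuine gap in Step 3. If you keep the intermediate projection at the full level $L$, the degree bookkeeping is false, and neither a clever pairing nor a vanishing argument repairs it.

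Take $L=10$, $S=2$ (so $J=22$), $(|\alpha|,|\beta|,|\gamma|)=(8,7,7)\in\mathcal{J}_{L,S}$ and $\deg p_b^{(i)}=2$. The drift summand $(\partial_iT_{k,\alpha})T_{k,\beta}T_{k,\gamma}p_b^{(i)}$ produces $\mathcal{E}[T_\mu T_\nu T_\rho T_\lambda]$ with $|\mu|=|\nu|=|\rho|=7$ and $|\lambda|=2$. Every pairing is therefore $\sum_{|\eta|\le L}\Gamma_{\mu\nu\eta}\Gamma_{\rho\lambda\eta}$, with two degree-$7$ indices in the first factor. Your vanishing property only cuts the sum to $|\eta|\le 9$. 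For $|\eta|=9$ the entry $\Gamma_{\mu\nu\eta}$ has total degree $23>J$, and it does not vanish even for exact orthonormal polynomials. Its partner $\Gamma_{\rho\lambda\eta}$ captures the top-degree part of $T_\rho T_\lambda$ and is generically nonzero too, so your fallback also fails.

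The diffusion term has the same defect for larger $S$. For example, with $L=20$, $S=6$ and $(16,15,15)$, putting both derivatives on the first factor gives $(14,15,15)$ with $|\lambda|=6$, and every pairing needs an entry of total degree $\ge 49>46$. The structural reason is this: whenever the two indices you project together have degrees summing to more than $L+S$, a level-$L$ projection necessarily reaches outside $\mathcal{J}_{L,S}$.

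The missing idea, which the paper uses, is to make the projection level adaptive rather than to prove that the offending entries vanish.
\begin{itemize}
\item Project the pair $T_{k,\eta}T_{k,\beta}$ onto the full degree-$L$ basis only when $|\eta|+|\beta|\le L+S$.
\item Otherwise, project it onto degrees $\le J-|\eta|-|\beta|$.
\end{itemize}
The first triple product then lies in $\mathcal{J}_{L,S}$ by construction. The second satisfies $|\nu|+|\gamma|+|\mu|\le J-|\eta|-|\beta|+|\gamma|+S<J-L+|\gamma|\le J$. This modifies the truncated evaluation rule, and that is exactly the sense in which the lemma says the right-hand side ``can be evaluated'' on $\mathcal{J}_{L,S}$.

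Separately, your inductive justification of the vanishing property for the algorithmic $\Gamma_k$ does not hold up, for three reasons.
\begin{itemize}
\item Degree-lower-triangularity of $\mathbf{L}_k$ does not transport a condition of the form $\Gamma_{\alpha\beta\gamma}=0$ for $|\gamma|>|\alpha|+|\beta|$, because the sum in \eqref{triple transform} includes low-degree $\eta$.
\item In exact arithmetic the property comes from orthogonality with respect to a single functional represented consistently by both $\mathbf{H}_k$ and $\widetilde\Gamma_k$. The separately truncated updates \eqref{grammatrix_RPC} and \eqref{tripleproduct_RPC} do not guarantee that consistency.
\item $\widetilde\Gamma_k$ itself lacks the property; for instance, the drift contribution $\mathcal{E}[T_\alpha T_\beta(\partial_iT_\gamma)p_b^{(i)}]$ is generally nonzero when $|\gamma|>|\alpha|+|\beta|$.
\end{itemize}
With the adaptive projection you do not need the vanishing property at all.
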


To summarize, the restricted set $\mathcal{J}_{L,S}$ plays two distinct roles in the pRPC evolution. For the inner product evolution \eqref{grammatrix_RPC}, every triple product that arises in the computation has total degree strictly less than $J$, so the restricted set contains all the terms needed to evaluate the inner products exactly within the truncated polynomial arithmetic. This ensures the accuracy of the Gram-Schmidt orthogonalization, and hence the orthonormal basis at the next time step.  For the triple product evolution \eqref{tripleproduct_RPC}, the fourth-order expectations generated by the It\^o--Taylor expansion can be decomposed, via an adaptive projection whose level is chosen according to the index magnitudes, into bilinear expressions involving only triple products from $\mathcal{J}_{L,S}$, thereby allowing the triple products themselves to be propagated forward in time.

The same sparsity structure carries over to the mRPC algorithm by analogous arguments, where we only need to maintain the first $2L+S$ moments and compute the triple products $\{\Gamma^{\mRPC}_{k,\mu\nu\eta}\}_{(\mu,\nu,\eta)\in\mathcal{J}_{L,S}}$. The projection operator $\Pi_{k,L}$ in \eqref{product calculation} is reinterpreted as follows. For $w_1 = \sum_{|\alpha|\leq S} w_{1,\alpha} T^{\mRPC}_{k,\alpha}$ and $ w_2 = \sum_{|\alpha|\leq L} w_{2,\alpha} T^{\mRPC}_{k,\alpha}$, we define
\begin{equation}
    \label{eq:sparse projection operator}
    \Pi_{k,L}(w_1 w_2) = \sum_{\substack{|\alpha|\leq S, |\beta|\leq L, |\eta|\leq L}} w_{1,\alpha} \circ w_{2,\beta} \, \Gamma^{\mRPC}_{k,\alpha\beta\eta} \, T_{k,\eta}^{\mRPC}.
\end{equation}
In this setting, all indices remain within $\mathcal{J}_{L,S}$, and moments of arbitrary order can be computed recursively.

    \subsection{Computational Complexity}
    In this section, we analyze the computational cost of Algorithm~\ref{algorithm}, starting with the full index set $|\alpha|,|\beta|,|\gamma| \leq L$. For the $d$-dimensional case, the number of basis functions with total degree at most $L$ is $C_L = \binom{d+L}{L}$.

    \begin{table}[H]
    \centering
    \begin{tabular}{|l|c|}
    \hline
    \textbf{Operation} & \textbf{Complexity} \\
    \hline
    Orthogonal polynomial construction & $\mathcal{O}(C_L^3)$ \\
    Triple product computation & $\mathcal{O}(C_L^6)$ \\ 
    Moment updates & $\mathcal{O}(dm\cdot L \cdot C_L^3)$ \\
    \hline
    \textbf{Total per time step} & $\mathcal{O}(dm\cdot L \cdot C_L^3 + C_L^6)$ \\
    \hline 
    \end{tabular}
    \caption{Computational complexity of the method presented in Algorithm \ref{algorithm}.}
    \label{complexity}
    \end{table}
    
    The estimates in Table \ref{complexity} are obtained as follows. At each time step, the algorithm first constructs the orthonormal polynomials via the Gram-Schmidt procedure, which incurs a cubic cost relative to the basis size, yielding $\mathcal{O}(C_L^3)$. Next, evaluating the triple products $\Gamma^{\mRPC}_{k+1,\alpha\beta\gamma}$ over the full index set $\{|\alpha|, |\beta|, |\gamma| \leq L\}$ requires $\mathcal{O}(C_L^6)$ operations, since 
    computing the expectation for each of the $C_L^3$ index triples takes $\mathcal{O}(C_L^3)$ steps.  Because each polynomial multiplication costs $\mathcal{O}(C_L^3)$, summing over all indices in \eqref{eqfirst_order} for the moment updates leads to a dominant update complexity of $\mathcal{O}(d m L C_L^3)$. Combining these sequential stages yields the total per-step complexity shown in Table~\ref{complexity}.

In addition, parallel computation strategies can be optionally implemented for further acceleration when needed. By employing pre-computed linear index arrays to enable direct memory access for multi-dimensional moment indices, we eliminate synchronization overhead in parallel environments. This allows us to parallelize the computation of triple products $\widetilde{\Gamma}_{k,\alpha\beta\gamma}$ across different index combinations $(\alpha,\beta,\gamma)$. Moment updates for different orders can be evaluated simultaneously, since each moment evolution depends only on previously computed polynomial coefficients. Since these operations dominate the computational cost, parallelization delivers substantial performance gains scaling with the number of available CPU cores.

When the full index set of $C_L^3$ triples is replaced by the restricted set $\mathcal{J}_{L,S}$, with $N_{L,S}=|\mathcal{J}_{L,S}|$, a completely analogous analysis shows that the per-step complexity of mRPC reduces to  $\mathcal{O}(dm\cdot N_{L,S} \cdot L + N_{L,S}^2)$. If both $L$ and $d$ are large, such an index set leads to a significant reduction in time complexity.

\begin{Remark}
  While the worst-case complexity terms $C_L^3 dmL$ and $C_L^6$ appear significant for large values of $d$ and $m$, several mitigating factors render mRPC highly efficient in practice. When the drift coefficient $b$ and diffusion coefficient $\sigma$ are sufficiently smooth, a moderate truncation order $L$ typically suffices to achieve satisfactory accuracy, which makes the overall complexity tractable. In practice, sparse truncation techniques $\cite{luo2006wiener}$ can be employed to maintain accuracy while substantially reducing computational costs for both moment calculations and evolution processes. Furthermore, from an algorithmic perspective, the computation of triple products can be significantly accelerated via tensor contraction. By decomposing the full tensor summation into sequential matrix multiplications, the computational complexity can be reduced from $\mathcal{O}(C_L^6)$ to $\mathcal{O}(C_L^4)$.
    \end{Remark}

    \subsection{Main Convergence Results} \label{sec:main convergence results}

    We state the main convergence results in this section. We primarily focus on RPC combined with the Euler-type numerical scheme \eqref{Euler type}. To approximate the numerical drift and diffusion terms on a compact interval, we introduce the following Lipschitz truncation operator. For $R\in\mathbb{R}_+$, define a truncation function $ \chi_{R}\in C_{c}^{\infty}(\mathbb{R}^{d};\mathbb{R}^{d}) $ by 
    \begin{equation*}
        \chi_{R}(x) :=
        \left\lbrace 
        \begin{aligned}
            &x,\qquad x\in I_{R/3},\\
            &0,\qquad x\in\mathbb{R}^{d}\backslash I_{R},
        \end{aligned}
        \right.
    \end{equation*}
    where $ I_{R} = [-R,R]^d\subset \mathbb{R}^{d}$ such that $ \chi_{R}(x) $ decays smoothly on $ I_{R/3}^{c} $ and is Lipschitz continuous
    \begin{equation*}
        \vert \chi_{R}(x) - \chi_{R}(y)\vert \leq \vert x - y\vert. 
    \end{equation*}
    The existence of such a function $\chi_R$ is established by a standard mollification procedure applied to each coordinate $x^{(i)}$, $1\leq i\leq d$, using the continuous, piecewise smooth, and odd function $\chi(x)$ defined by $ \chi(x^{(i)}) = R/3 - |x^{(i)}-R/3| $ on $ [0, 2R/3] $ and extended by $ 0 $ outside $ [-2R/3,2R/3] $.

    Assume that we approximate $b_h$ and $\sigma_h$ on the compact set $I_R$ by polynomials $p_b^{R,S}$ and $p_\sigma^{R,S}$ with degrees $\deg(p_b^{R,S}) \le S$ and $2\deg(p_\sigma^{R,S}) \le S$, given by
    \begin{equation}\label{eq:approximate polynomials}
        p^{R,S}_b(x) = \sum_{|\alpha|\leq S} b_\alpha x^\alpha,\quad\big[p^{R,S}_\sigma(x)\big]^2 = \sum_{|\alpha|\leq S} \sigma_\alpha x^\alpha.
    \end{equation}
    We introduce the following numerical scheme \eqref{projection scheme} as a reference solution. For $0\leq k\leq n-1$, let $z_k$ be the same Brownian motion increments from time step $t_k$ to $t_{k+1}$ as in \eqref{Euler type}. Define
    \begin{equation}\label{projection scheme}
        \begin{aligned}
            v^{R,S}_{k+1} = \chi_{R}\left(v^{R,S}_{k} + p_{b}^{R,S}(v^{R,S}_{k})h +  p_{\sigma}^{R,S}(v^{R,S}_{k})z_{k}\sqrt{h}\right),  \quad     v^{R,S}(0) &= \chi_{R}(u_{0}).
        \end{aligned}
    \end{equation}
     
    The main assumptions we impose are summarized as follows.
    
    \begin{Asm}\label{asm:drift diffusion terms}
        (i) Assume that the numerical drift term $b_h$ and the numerical diffusion term $\sigma_h$ in~\eqref{Euler type} satisfy the following conditions: there exists a constant $K > 0$ such that for all $x, y \in \mathbb{R}^d$,
            \begin{equation}
                \begin{aligned}
                    |b_{h}(x)| &\leq K(|x| + 1),\quad \sup_{x \in \mathbb{R}^d} |\sigma_h(x)| < K,\\
                    |b_h(x) - b_h(y)| &\leq K|x - y|, \quad
                    |\sigma_h(x) - \sigma_h(y)| \leq K|x - y|. 
                \end{aligned}
            \end{equation}
            That is, $b_h$ has linear growth, $\sigma_h$ is uniformly bounded, and both $b_h$ and $\sigma_h$ are globally Lipschitz continuous.

            (ii) Assume that there exist a positive number $r$ and a constant $C>0$ such that for every $R>0$ and every integer $S\ge 0$, there exist polynomials $p_b^S$ and $p_\sigma^S$ satisfying $\max\{\deg(p_b^S),\,2\deg(p_\sigma^S)\}\le S$ and
            \begin{equation}
                \max\left\{ \sup_{x\in I_R} \big|b_h(x)-p_b^S(x)\big|,\; \sup_{x\in I_R} \big|\sigma_h(x)-p_\sigma^S(x)\big| \right\} \le C R^{r} S^{-r}.
            \end{equation}

            (iii) For each truncation level $R>R^*$, suppose 
         $L=\lfloor \lambda_1 R^\eta\rfloor, S=\lfloor \lambda_2 R^\eta\rfloor$ for some constant $\lambda_1\geq \lambda_2 >0$ and $ \eta \in (1,2)$ independent of $R$. Let
            \begin{equation}
            \label{Mb}
            M_{R,b} := \sum_{|\alpha|\le S} |b_\alpha|(2L+2S)^{|\alpha|},
            \qquad
            M_{R,\sigma} := \sum_{|\alpha|\le S } |\sigma_\alpha|(2L+2S)^{|\alpha|},
            \end{equation}
            and $M_{R,b,\sigma}:=M_{R,b}+\frac12 M_{R,\sigma}$, where the coefficients $b_\alpha,\sigma_\alpha$ are defined in \eqref{eq:approximate polynomials}.
            We assume that there exist constants $C_{b,\sigma}>0$ and $C_\sigma > 0$, independent of $R$, such that 
            \begin{equation}
                M_{R,b,\sigma}\le C_{b,\sigma} R^ \eta. 
            \end{equation}
    \end{Asm}

    \begin{Asm}\label{asm:initial condition}
        There exists a constant $\delta_0 > 0$ such that the initial condition $u_0$ in \eqref{original SDE} satisfies
        \begin{equation}
            \mathbb{E}\left[\exp\left(\delta_0 |u_0|^2\right)\right] < \infty.
        \end{equation}
    \end{Asm}
    
    \begin{Asm}\label{asm:stability}
        (i) For every $0 \le k \le n$, the  Hankel matrix of the mRPC algorithm $\mathbf{H}^{\RPC}_k \in \mathbb{R}^{C_L \times C_L}$ defined by $        (\mathbf{H}^{\RPC}_k)_{\alpha,\beta} = m_{k,\alpha+\beta}^{\mathrm{RPC}},  |\alpha|, |\beta| \le L$ with $C_L = \binom{L+d}{d}$, is positive definite.

        (ii) There exists a constant $C_\delta$, depending only on $K$ and $T$, such that the moments computed by the mRPC method  satisfy
        \begin{equation}
        \sup_{0 \le k \le n} |m_{k,\gamma}^{\mathrm{RPC}}| \le C_{\delta} \sqrt{|\gamma|!}
        \end{equation}
        for all multi-indices $\gamma$ with $|\gamma| \le 2L + 2S$.
    \end{Asm}

    \begin{Asm}\label{asm:positive measure}
        There exists a constant $h_0>0$ such that for all $h\in(0,h_0)$ and for each $t_k$, the distribution $\mu_k$ of the solution $v_k$ in \eqref{Euler type} is continuous and satisfies the following condition: there exists a constant $R^* > 0$ such that for every polynomial degree $q$, there exists a constant $\lambda(q) > 0$ such that for all $R > R^*$ and for every real coefficient vector $\{c_\alpha\}_{|\alpha|\le q}$ satisfying
        \[
        \sum_{|\alpha|\le q} c_\alpha^2 = 1 \quad \text{and} \quad \sum_{|\alpha|\le q} c_\alpha x^\alpha \ge 0 \ \text{ for all } x\in I_R,
        \]
        we have
        \begin{equation}
            \inf_{\substack{0 \leq k \leq n \\ \sum_{|\alpha| \leq q} c_\alpha^2 = 1}}  \int_{I_R}\sum_{|\alpha| \leq q} c_{\alpha} x^{\alpha}   \mu_{k}(dx) \geq \lambda(q).
        \end{equation}
    \end{Asm}

    \begin{Remark} \rm
        Assumption \ref{asm:drift diffusion terms} imposes regularity conditions on $b$ and $\sigma$. 
        For results on polynomial approximation, see, e.g., \cite{bagby2002multivariate,gautschi2004orthogonal,de2012mathematics}. 
        Whenever $b$ and $\sigma$ extend holomorphically to a complex neighborhood of $[-\lambda R^\eta,\lambda R^\eta]^d$ and have at most linear growth there, the coefficients of the corresponding polynomials decay sufficiently fast that the weighted sums $M_{R,b}$ and $M_{R,\sigma}$ remain of order $\mathcal O(R^\eta)$. 
        
        Assumption~\ref{asm:stability} is a stability assumption concerning the mRPC scheme. (i) guarantees that the Gram--Schmidt procedure on the monomial basis $\{x^\alpha\}_{|\alpha| \le L}$ with respect to the bilinear form $\langle x^\alpha, x^\beta \rangle_k := m_{k,\alpha+\beta}^{\mathrm{RPC}}$ yields a well-defined family of orthonormal polynomials. (ii) is the analogue, at the level of moment sequences, of the exponential square-integrability bound $\mathbb{E}[\exp(\delta |v_k^{R,S}|^2)] \le C_2$ established in Lemma~\ref{determinate lemma2}. While obtaining such estimates directly for some classes of equations \eqref{original SDE} would be desirable, these verifications are left for future work. Such estimates are directly verifiable numerically.       

    \end{Remark}
    \begin{Remark} \rm
         Assumption \ref{asm:positive measure} excludes cases where the distribution $\mu_k$ degenerates, for instance to a Dirac measure, or concentrates on a lower-dimensional manifold on which certain non-trivial polynomials vanish identically. A typical example is the small-noise SDE
        \begin{equation*}
            du(t) = b(u(t))   dt + \varepsilon \sigma(u(t))   dW(t)
        \end{equation*}
        with $\varepsilon \to 0$ and $b(u)=-c u$ for some $c>0$, whose distribution converges to a Dirac measure concentrated at the origin in that limit. The requirement of a uniform positive lower bound for $\lambda(q)$ over all indices $0 \leq k \leq n$ ensures that the sequence of distributions remains sufficiently non-degenerate throughout the entire evolution. 
    \end{Remark}

    Under Assumptions~\ref{asm:drift diffusion terms} - \ref{asm:initial condition}, we prove that the distribution of $v^{R,S}_k$ satisfies the following error estimate.  
    
    \begin{Lemma}\label{convergent thm}
        Under Assumptions \ref{asm:drift diffusion terms} and \ref{asm:initial condition}, there exist constants $ \delta> 0 $, $ C > 0 $, and $h^*>0$, depending on $ K $, $ T $, and $\delta_0$, such that for all $h\in(0,h^*)$,
        \begin{equation}
            \max_{0\leq k\leq n}\left(\mathbb{E}|v_k - v^{R,S}_k|^2\right)^{\frac{1}{2}} \leq C e^{-\delta  R^2}  + C R^r S^{-r},
        \end{equation}
        where $v_k$ is the solution of Euler-type scheme \eqref{Euler type} and $v_k^{R,S}$ is the solution of \eqref{projection scheme}.
    \end{Lemma}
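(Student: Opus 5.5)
We will compare the two recursions directly through the error $e_k := v_k - v^{R,S}_k$, splitting into the event that the Euler-type path stays well inside the cube and its complement. First, set $w_{k+1} := v_k + b_h(v_k)h + \sigma_h(v_k)z_k\sqrt h$. Since $\chi_R(x)=x$ on $I_{R/3}$, on the event $\{w_{k+1}\in I_{R/3}\}$ we have $v_{k+1} = \chi_R(w_{k+1})$. We will then write
\[
v_{k+1}-v^{R,S}_{k+1} = \bigl(v_{k+1}-\chi_R(w_{k+1})\bigr) + \bigl(\chi_R(w_{k+1}) - \chi_R(w^{R,S}_{k+1})\bigr),
\]
with $w^{R,S}_{k+1}$ the argument of $\chi_R$ in \eqref{projection scheme}. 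The first term vanishes off the event $A_{k+1}:=\{w_{k+1}\notin I_{R/3}\}$. The second term is bounded, by the $1$-Lipschitz property of $\chi_R$, by $|w_{k+1}-w^{R,S}_{k+1}|$.

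Second, we expand $w_{k+1}-w^{R,S}_{k+1} = e_k + h(b_h(v_k)-p_b(v^{R,S}_k)) + \sqrt h(\sigma_h(v_k)-p_\sigma(v^{R,S}_k))z_k$. Since $v^{R,S}_k\in I_R$ always, we insert $b_h(v^{R,S}_k)$ and $\sigma_h(v^{R,S}_k)$. Assumption~\ref{asm:drift diffusion terms}(i) then gives Lipschitz differences bounded by $K|e_k|$, and (ii) gives approximation differences bounded by $CR^rS^{-r}$. Squaring and taking expectations uses independence of $z_k$ from $(v_k,v^{R,S}_k)$, which kills the cross term between the $e_k$-part and the $z_k$-part. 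The result is a one-step recursion
\[
\mathbb{E}|e_{k+1}|^2 \le (1+Ch)\,\mathbb{E}|e_k|^2 + Ch\,R^{2r}S^{-2r} + C\,\mathbb{E}\bigl[|v_{k+1}-\chi_R(w_{k+1})|^2\bigr] + \text{cross terms}.
\]
We handle the cross terms with Young's inequality, using $2ab\le h a^2 + h^{-1}b^2$. Discrete Gronwall over $n=T/h$ steps then yields $\max_k \mathbb{E}|e_k|^2 \le C R^{2r}S^{-2r} + C h^{-1}\sum_k \mathbb{E}[|v_{k+1}|^2\mathbf 1_{A_{k+1}}]$. The initial error $\mathbb{E}|u_0-\chi_R(u_0)|^2$ is of the same tail type.

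Third, and this is the main obstacle, we must bound the tail term by $Ce^{-2\delta R^2}$ uniformly in $k$, and absorb the factor $h^{-1}\cdot n$ coming from the summation. For this we prove a uniform exponential-square moment bound $\sup_{k\le n}\mathbb{E}\exp(\delta|v_k|^2)\le C_1$ for some $\delta\in(0,\delta_0)$ and all $h<h^*$; $w_{k+1}$ is handled likewise. The proof conditions on $v_k$ and computes the Gaussian integral in $z_k$ exactly. With $|\sigma_h|<K$ bounded and $b_h$ of linear growth,
\[
\mathbb{E}\bigl[e^{\delta_k|v_{k+1}|^2}\mid v_k\bigr]\le (1-2\delta_k K^2 h)^{-m/2}\exp\Bigl(\tfrac{\delta_k(1+Ch)}{1-2\delta_kK^2h}|v_k|^2 + C\delta_k h\Bigr).
\]
We therefore choose a decreasing sequence $\delta_{k+1}$ with $\delta_{k+1}(1+C'h)\le\delta_k$ and $\delta_0$ given by Assumption~\ref{asm:initial condition}. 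Over $n$ steps this keeps $\delta_n\ge \delta_0 e^{-C'T}>0$ and the constants bounded by $e^{CT}$. Chebyshev's inequality then gives $\mathbb{E}[|v_{k}|^2\mathbf 1_{\{|v_k|\ge R/3\}}]\le C e^{-\delta R^2/18}$. Uniformity in $h$ of the exponential rate is the delicate point, which is why $h^*$ appears.

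To avoid the lossy $h^{-1}$ factor, we will instead note that $A_{k+1}$ has probability $e^{-cR^2}$. Hence summing $n\le T/h$ such terms still gives $C h^{-1}e^{-cR^2}$. If that loss is unacceptable, we can bound the contribution with $\mathbb{E}[\max_k|v_k|^2\mathbf 1_{\{\max_k|v_k|\ge R/3\}}]$ via a maximal exponential bound, or restrict to $R$ large enough that $h^{-1}e^{-cR^2/2}\le 1$ for the admissible $h$ range. Taking square roots and relabelling $\delta$ gives the claimed $Ce^{-\delta R^2}+CR^rS^{-r}$.
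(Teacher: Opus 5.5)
Your one-step comparison is fine: inserting $b_h(v^{R,S}_k)$ and $\sigma_h(v^{R,S}_k)$, using $v^{R,S}_k\in I_R$, and using independence of $z_k$ all work. The gap is in how the truncation error enters, which makes your bound non-uniform in $h$.

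In your recursion the term $v_{k+1}-\chi_R(w_{k+1})$ is re-injected at every step. It depends on $z_k$, so its cross term with $\chi_R(w_{k+1})-\chi_R(w^{R,S}_{k+1})$ is not killed by independence. Gronwall then sums $n=T/h$ such contributions. With Young's inequality this gives $Ch^{-2}e^{-cR^2}$. If you instead bound each injection by $Ce^{-cR^2}$ via Cauchy--Schwarz against $\mathbf{1}_{A_{k+1}}$, you get $Ch^{-1}e^{-cR^2}$. The lemma asserts $\delta$, $C$, $h^*$ independent of $h$, and $h^{-1}e^{-cR^2}\le Ce^{-\delta R^2}$ fails for fixed $R$ as $h\to0$. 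So this loss is not acceptable, and restricting $R$ in terms of $h$ proves a different statement. The same factor $n$ reappears if you estimate $\mathbb{P}(\max_k|v_k|\ge R/3)$ by a union bound over the per-step tails.

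The missing idea is to pay for the truncation only once: localize pathwise, and control the exit probability with a maximal inequality rather than a union bound. Let $\tau=\inf\{k: v_k\notin I_{R/3}\}$ and $x_k=\mathbb{E}[|e_k|^2\mathbf{1}_{\{\tau>k\}}]$.
\begin{itemize}
\item Since $\{\tau>k\}$ is $\mathscr{F}_k$-measurable, it is independent of $z_k$. Since $v_{k+1}=\chi_R(w_{k+1})$ on $\{\tau>k+1\}\subset\{\tau>k\}$, your one-step computation gives $x_{k+1}\le(1+Ch)x_k+ChR^{2r}S^{-2r}$ with $x_0=0$ and no tail term at all.
\item You cannot instead restrict to the complement of $\{\max_k|v_k|\ge R/3\}$. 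That event depends on $z_k,\dots,z_{n-1}$, which destroys the independence you rely on.
\item The remainder satisfies $\mathbb{E}[|e_k|^2\mathbf{1}_{\{\tau\le k\}}]\le \mathbb{P}(\tau\le n)^{1/2}(\mathbb{E}|e_k|^4)^{1/2}$. Here $\mathbb{E}|e_k|^4\le C(1+R^4)$, which is harmless.
\item $\mathbb{P}(\tau\le n)\le Ce^{-cR^2}$, with no factor $n$, follows from Doob's maximal inequality for $M_k=\exp(\delta_k|v_k|^2-Ckh)$. Your own choice $\delta_{k+1}(1+C'h)\le\delta_k$ already makes $M_k$ a nonnegative supermartingale, so you were one step away.
\end{itemize}
This is exactly the paper's argument, organized through the intermediate scheme $v^R_{k+1}=\chi_R(v^R_k+b_h(v^R_k)h+\sigma_h(v^R_k)z_k\sqrt{h})$. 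On $\{\tau>n\}$ one has $v_k=v^R_k$ for every $k$, so $(\mathbb{E}|v_k-v^R_k|^2)^{1/2}\le Ce^{-\delta R^2}$ by Doob. The comparison of $v^R_k$ with $v^{R,S}_k$ is then a clean Lipschitz--Gronwall argument in which $\chi_R$ enters only through its Lipschitz constant.
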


    See Section~\ref{convergenceresults} for the proof.  Our main convergence result compares the moments obtained by the mRPC method (Algorithm \ref{algorithm}) with the corresponding moments of $v_k^{R,S}$. At time step $k$, let $\{ m_{k,\gamma}^{R,S} \}_{|\gamma|\leq J}$ denote the moments of $v_k^{R,S}$ up to order $2L+S$. Let $\{ m_{k,\gamma}^{\mathrm{RPC}} \}_{|\gamma|\leq J}$ be the moments computed via the mRPC method, where $J=2L+S$ is defined in \eqref{eq:restricted_set}. 
    
    We will prove in Lemma~\ref{determinate lemma2} below that $v_k^{R,S}$ is exponentially integrable; that is, there exist constants $\delta>0$ and $C>0$ independent of $k$ such that $\mathbb{E}[e^{\delta|v_k^{R,S}|^2}] \le C$.  To compare two such determinate distributions via their moments, we introduce the following metric. Let $\mu$ and $\nu$ be two continuous probability measures on $\mathbb{R}^d$ for which there exist $\delta_\mu > 0$ and $\delta_\nu > 0$, such that $    
    \int  e^{\delta_{\mu}|x|^2} d\mu(x) < \infty$ and $\int e^{\delta_{\nu}|x|^2} d\nu(x) < \infty$. 
    By Lemma~\ref{dense thm} below, $\mu$ and $\nu$ are determinate (i.e., uniquely determined by their moments). We define the distance between $\mu$ and $\nu$ by 
    \[
    d(\mu,\nu) = \sum_{|\gamma| \ge 0} \frac{|m_\gamma(\mu) - m_\gamma(\nu)|}{|\gamma|!}.
    \]
    The above bounds ensure that the distance is bounded. We now introduce a truncated version of the above metric for any two moment sequences $\{m_{1,\gamma}\}_{|\gamma|\leq J}$ and $\{m_{2,\gamma}\}_{|\gamma|\leq J}$:  
    \begin{equation}\label{metric}
        d\bigl(\{m_{1,\gamma}\}_{|\gamma|\leq J},\{m_{2,\gamma}\}_{|\gamma|\leq J}\bigr)  = \sum_{|\gamma|\leq J} \frac{|m_{1,\gamma}-m_{2,\gamma}|}{|\gamma|!}.
    \end{equation}
    We have the following error estimate for Algorithm~\ref{algorithm}. 
    
        \begin{Thm}\label{thm: moment error}
        Under Assumptions \ref{asm:drift diffusion terms} --
        \ref{asm:stability}, if we set $R  =  \delta^{-1/2}(-2\log h)^{1/2}$, then for every $N\ge0$, there exist constants $C_N>0$ and $h_N>0$, depending only on $N$, $K$, $T$, $d$, $C_{b,\sigma}$, $\lambda_1$, $\lambda_2$, and $\delta_0$, such that for all $h\in(0,h_N)$, we have 
\begin{equation}\label{spectral truncation}
\sup_{0\le k\le n}
d\left( \{ m_{k,\gamma}^{R,S} \}_{|\gamma|\le J},
       \{ m_{k,\gamma}^{\RPC} \}_{|\gamma|\le J} \right)
\le C_N(-\log h)^{-N},
\end{equation}
    where $\{m^{R,S}_{k,\gamma}\}_{|\gamma|\leq J}$ are the moments corresponding to the distribution of the solution $v_{k}^{R,S}$ of scheme \eqref{projection scheme}, and $\{m^{\RPC}_{k,\gamma}\}_{|\gamma|\leq J}$ are the moments computed by the mRPC algorithm.
    \end{Thm}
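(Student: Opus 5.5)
The strategy is a discrete stability-plus-consistency (Lax-type) argument in the weighted moment metric \eqref{metric}. First I would write down the exact one-step moment recursion satisfied by $m^{R,S}_{k,\gamma}$. Without the cutoff, $v^{R,S}_{k+1}$ is a polynomial in $(v^{R,S}_k,z_k)$, so the multinomial identity \eqref{moment_cal} with $p_b^{R,S},p_\sigma^{R,S}$ gives $\E[(v^{R,S}_{k+1})^\gamma]$ exactly in terms of moments of $v^{R,S}_k$ of order at most $|\gamma|+S\cdot(\text{number of factors})$. This recursion splits into the first-order part \eqref{eqfirst_order}, written with exact expectations $\E$ in place of $\mathcal{E}$, and a remainder of order $h^{2}$. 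The mRPC recursion is the same first-order map, but with $\E$ replaced by $\mathcal{E}$, i.e.\ with the products reduced through $\Pi_{k,L}$ in \eqref{eq:sparse projection operator}.

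Writing $e_k:=d(\{m^{R,S}_{k,\gamma}\},\{m^{\RPC}_{k,\gamma}\})$, I would decompose $e_{k+1}-e_k$ into four pieces:
\begin{itemize}
\item[(a)] the cutoff error from $\chi_R$;
\item[(b)] the neglected $\mathcal{O}(h^2)$ multinomial terms;
\item[(c)] the projection (truncation) error committed by $\Pi_{k,L}$ when it acts on the exact distribution of $v^{R,S}_k$;
\item[(d)] the propagation of the previous error $e_k$ through the linear first-order map.
\end{itemize}

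For (d), the map $m\mapsto m+h\,\mathcal{A}m$ defined by \eqref{eqfirst_order} is linear in the moments. Each drift term $\gamma_i\,\E[v^{\gamma-e_i}p_b^{(i)}]$ shifts the moment index by up to $S$ and carries the coefficients $|b_\alpha|$ multiplied by combinatorial factors bounded by $(2L+2S)^{|\alpha|}$; the diffusion terms behave the same way with $\sigma_\alpha$. I therefore expect $\|\mathcal{A}\|\le M_{R,b,\sigma}$ in the $1/|\gamma|!$-weighted $\ell^1$ norm, since the weight $1/|\gamma|!$ absorbs the $\gamma_i$ factors. Assumption~\ref{asm:drift diffusion terms}(iii) then gives $e_{k+1}\le(1+C_{b,\sigma}hR^\eta)e_k+\tau_k$. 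Discrete Gronwall yields
\[
\sup_k e_k\le e^{C_{b,\sigma}TR^\eta}\sum_k\tau_k .
\]
With $R=\delta^{-1/2}(-2\log h)^{1/2}$ and $\eta<2$, the amplification factor is $\exp(C(-\log h)^{\eta/2})$, which is $h^{-\epsilon}$ for every $\epsilon>0$ once $h$ is small. The moment bounds needed along the way come from Assumption~\ref{asm:stability}(ii) for the mRPC side and from Lemma~\ref{determinate lemma2} for the $v^{R,S}$ side.

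For the local errors:
\begin{itemize}
\item[(a)] Exponential integrability (Lemma~\ref{determinate lemma2}) and Assumption~\ref{asm:initial condition} make the probability of leaving $I_{R/3}$ at most $Ce^{-\delta R^2}\lesssim h^{2}$. Combined with the $\sqrt{|\gamma|!}$-type moment bounds via Cauchy--Schwarz, the weighted cutoff error per step is $\mathcal{O}(h^{2-\epsilon})$, which sums to $\mathcal{O}(h^{1-\epsilon})$.
\item[(b)] Using the boundedness of $z_k$ moments and the same bounds for $M_{R,b,\sigma}$, the remainder is $\mathcal{O}(h^2 R^{2\eta})$ per step.
\item[(c)] This is the main obstacle, and it is the only source of the $(-\log h)^{-N}$ rate. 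Projecting a product of a degree-$S$ and a degree-$L$ expansion back onto degree $\le L$ discards the chaos components of degree $L+1,\dots,L+S$. I would bound these components using the orthonormal-polynomial representation built from the positive-definite Hankel matrix of Assumption~\ref{asm:stability}(i). In the weighted metric, their contribution to moments of order $|\gamma|\le J$ is controlled by $\sqrt{(|\gamma|+S)!}/|\gamma|!$-type ratios applied to coefficients of size $\sqrt{\cdot!}$. The goal is a per-unit-time error $\lesssim C_N L^{-N'}$ for any $N'$, i.e.\ superpolynomial decay in $L$ from the factorial weights. Because this error enters every step with a factor $h$, it contributes $T\,C_NL^{-N'}$.
\end{itemize}

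To finish, I would multiply the accumulated local errors by the Gronwall factor. The terms (a) and (b) are harmless: they give $h^{1-2\epsilon}$. For (c), I expect the factorial decay to be of the form $e^{-cL}$ or better, which beats $e^{CR^\eta}$ because $L\sim\lambda_1R^\eta$ with $\lambda_1\ge\lambda_2$. If it only gives $L^{-N'}$, I would instead apply the Gronwall argument to the truncation error directly, restricting the propagation to the top-order moments. Either way the result is $C_N R^{-\eta N'}\sim C_N(-\log h)^{-N}$, which is \eqref{spectral truncation}. The constants depend only on the quantities listed in the statement, since $L,S,R$ are all explicit functions of $h$.
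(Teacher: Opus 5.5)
Your overall architecture matches the paper's split $\Delta_{k+1}\le\Delta^{(1)}_{k+1}+\Delta^{(2)}_{k+1}$: a consistency error plus stability of the mRPC map, closed by discrete Gronwall with amplification $e^{C_{b,\sigma}TR^\eta}$, and a cutoff error $e^{-\delta R^2}=h^2$ per step. However, step (c), which you call the main obstacle, is left open, and the way you propose to close it fails.

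\textbf{The missing idea in (c).} The missing idea is that $\mathcal{E}$ is \emph{exact} on monomials of total degree $\le J=2L+S$ (Lemma~\ref{mRPC bound}). For $|\gamma|\le J$ the monomial splits into two degree-$\le L$ factors and one degree-$\le S$ factor. $\mathcal{E}$ of such a product is a contraction against triple products in $\mathcal{J}_{L,S}$, and these are exact for the law of $v_k^{R,S}$. So the components discarded by $\Pi_{k,L}$ cause no error at all in moments of order $\le J$. The only error comes from terms of \eqref{eqfirst_order} that need monomials of order between $J+1$ and $J+S-1$, which forces $|\gamma|\ge 2L+2$. There the weight $1/(|\gamma|-1)!$ against the growth $\sqrt{(|\gamma|+|\alpha|-1)!}$ leaves $(2L+2S)^{|\alpha|}/\sqrt{(2L+1)!}$. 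This gives a local error of order $C_SC_{b,\sigma}R^\eta h/\sqrt{(2L+1)!}$. Without exactness, your ratios $\sqrt{(|\gamma|+S)!}/|\gamma|!$ are large for small $|\gamma|$ and yield no decay in $L$.

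\textbf{The rate comparison is wrong.} Decay $e^{-cL}=e^{-c\lambda_1R^\eta}$ does not in general beat $e^{C_{b,\sigma}TR^\eta}$. Both are $\exp(\Theta(R^\eta))$, and no assumption relates $c\lambda_1$ to $C_{b,\sigma}T$; the condition $\lambda_1\ge\lambda_2$ is irrelevant here. What closes the argument is the superexponential bound $\sqrt{(2L+1)!}\ge C\exp(cR^\eta\log R-cR^\eta)$ from Stirling, and the extra factor $\log R$ is what wins. Your fallback with only $L^{-N'}$ decay cannot work either. The amplification $\exp(C(-\log h)^{\eta/2})$ grows faster than any power of $-\log h$, and it acts on all moments, not only the top-order ones.

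\textbf{The stability step (d) also needs repair.} The mRPC update is not a linear map on the stored moments $\{m_\gamma\}_{|\gamma|\le J}$. To produce orders $\le J$, \eqref{eqfirst_order} evaluates $\mathcal{E}$ on monomials of order up to $J+S-1$. These are not stored, and they are nonlinear functions of the stored moments through Gram--Schmidt and the triple products. So there is no operator $\mathcal{A}$ with $\|\mathcal{A}\|\le M_{R,b,\sigma}$ acting on the difference of the two moment sequences.

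The paper splits the sum instead:
\begin{itemize}
\item The part of total order $\le J$ is linear and, by exactness on both sides, contributes $C_{b,\sigma}R^\eta h\,\Delta_k$.
\item The high-order part is bounded in absolute value on each side separately. This uses Assumption~\ref{asm:stability}(ii) for the mRPC moments and the Cauchy--Schwarz half of Lemma~\ref{mRPC bound}, applied to both moment functionals.
\end{itemize}
The high-order part again produces a term $C_SC_{b,\sigma}R^\eta h/\sqrt{(2L+1)!}$. This is where Assumption~\ref{asm:stability}(ii) is actually needed, and it relies on the same factorial mechanism as above.
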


    Fix a moment degree $\ell$. By Theorem~\ref{thm: moment error}, the moments of order $|\gamma|\le \ell$ computed by the RPC method can be made sufficiently close to the corresponding moments of $v_k^{R,S}$.

    Under conditions described in detail in Section~\ref{sec:moment problem}, we can recover a probability measure from finitely many moments. For a given moment degree $\ell$, Lemma~\ref{lemma:distribution matching} below shows that there exists $h_{\tm}>0$ such that for all $h\le h_{\tm}$ and for $R = \delta^{-1/2}(-2\log h)^{1/2}$, there exists a probability measure $\mu_{k,l}^{\RPC}$ supported on $I_R = [-R,R]^d$ whose moments of order $|\gamma|\le \ell$ coincide with $\{m_{k,\gamma}^{\RPC}\}_{|\gamma|\le \ell}$. To quantify the difference between $\mu_k$ and $\mu_{k,\ell}^{\RPC}$, we employ the $1$-Wasserstein distance defined as follows:
\begin{equation*}
    W_{1}(\mu, \nu) = \inf_{\pi \in \mathscr{C}(\mu, \nu)} \int_{\mathbb{R}^{d} \times \mathbb{R}^{d}} |x - y| \, \pi(dx, dy),
\end{equation*}
    where $\mathscr{C}(\mu, \nu)$ denotes the set of all couplings of $\mu$ and $\nu$; that is, the set of probability distributions on $\mathbb{R}^d \times \mathbb{R}^d$ with marginals $\mu$ and $\nu$, respectively. We then obtain the following estimate:
    \begin{Thm}\label{W1distance}
    Let $\ell\in\mathbb N$ be a fixed moment-matching order and let $s\in\bigl(0,\tfrac{1}{2}\bigr)$.
    Under Assumptions~\ref{asm:drift diffusion terms}--\ref{asm:positive measure}, if we set $R  =  \delta^{-1/2}(-2\log h)^{1/2}$, then there exist constants $C > 0$ and $h_{\ell}$ that depend on $d$, $K$, $T$, $\lambda_1$, $\lambda_2$, $\delta$, $\eta$, $s$, $r$, such that, for all $h\in (0,h_{\ell})$, we have
        \begin{equation}
    \begin{aligned}
W_1 \bigl(\mu_k, \mu_{k,\ell}^{\mathrm{RPC}}\bigr)
\le C \ell^{-s} + C(-\log h)^{-\frac{r(\eta-1)}{2}},
    \end{aligned}
    \end{equation}
    where $\mu_k$ is the distribution of $v_k$ in \eqref{Euler type} and $\mu_{k,\ell}^{\RPC}$ is the distribution that matches the moments $\{m_{k,\gamma}^{\RPC}\}_{|\gamma|\leq \ell}$.
    \end{Thm}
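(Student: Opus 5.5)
We split the distance by the triangle inequality for $W_1$ through two intermediate measures. Let $\mu_k^{R,S}$ be the law of $v_k^{R,S}$ from \eqref{projection scheme}, and $\mu_{k,\ell}^{R,S}$ a measure on $I_R$ matching the moments $\{m^{R,S}_{k,\gamma}\}_{|\gamma|\le\ell}$. Then
\[
W_1(\mu_k,\mu^{\RPC}_{k,\ell}) \le W_1(\mu_k,\mu_k^{R,S}) + W_1(\mu_k^{R,S},\mu^{\RPC}_{k,\ell}).
\]

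\textbf{Step 1: first term.} Couple $v_k$ and $v_k^{R,S}$ through the common increments $z_k$, so $W_1\le (\mathbb E|v_k-v_k^{R,S}|^2)^{1/2}$. Lemma~\ref{convergent thm} bounds this by $Ce^{-\delta R^2}+CR^rS^{-r}$. With $R=\delta^{-1/2}(-2\log h)^{1/2}$ we get $e^{-\delta R^2}=h^2$. With $S=\lfloor\lambda_2R^\eta\rfloor$ we get $R^rS^{-r}\sim R^{-r(\eta-1)}\sim(-\log h)^{-r(\eta-1)/2}$. This gives the second term of the statement.

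\textbf{Step 2: second term.} Both $\mu_k^{R,S}$ and $\mu^{\RPC}_{k,\ell}$ are supported on $I_R$, since $\chi_R$ maps into $I_R$ and Lemma~\ref{lemma:distribution matching} gives the support of $\mu^{\RPC}_{k,\ell}$. We use Kantorovich--Rubinstein duality: it suffices to bound $\int f\,d(\mu_k^{R,S}-\mu^{\RPC}_{k,\ell})$ uniformly over $1$-Lipschitz $f$ normalized by $f(0)=0$. Approximate $f$ on the rescaled cube by a polynomial $P$ of total degree $\le\ell$ (multivariate Jackson theorem) and write
\[
\int f\,d(\mu-\nu) = \int (f-P)\,d\mu - \int (f-P)\,d\nu + \sum_{|\gamma|\le\ell} P_\gamma\,(m_\gamma(\mu)-m_\gamma(\nu)).
\]
The moment-difference term is controlled by Theorem~\ref{thm: moment error}, since $\mu^{\RPC}_{k,\ell}$ has moments $m^{\RPC}_{k,\gamma}$. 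Its contribution is $\ell!\,C_N(-\log h)^{-N}\sum|P_\gamma|$, and for fixed $\ell$ this is made small by taking $h<h_\ell$ and $N$ large. This is where the dependence of $h_\ell$ on $\ell$ enters.

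\textbf{Step 3: the main obstacle.} The approximation error $\|f-P\|$ must be controlled with a rate $\ell^{-s}$ that is uniform in $R\sim(\log h)^{1/2}$. A naive Jackson estimate on $I_R$ gives $R\ell^{-1}$, which grows with $R$, and the polynomial coefficients blow up like $R^\ell$. The plan is not to approximate on all of $I_R$. Instead, I would use the Gaussian-type tails of $\mu_k^{R,S}$ (Lemma~\ref{determinate lemma2}) and the moment bound of Assumption~\ref{asm:stability}(ii) to localize to a cube of size $\ell^{1/2-s}$. Jackson on that cube gives error $\ell^{1/2-s}\ell^{-1/2}=\ell^{-s}$ (up to the exponent bookkeeping that forces $s<\tfrac12$). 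The mass outside the cube is controlled by tails via Chebyshev with high moments, which are uniformly bounded on both sides. Assumption~\ref{asm:positive measure} is needed in Lemma~\ref{lemma:distribution matching} to guarantee that the matching measure exists and inherits comparable tail control. Combining these pieces gives $C\ell^{-s}$ plus terms absorbed into $C(-\log h)^{-r(\eta-1)/2}$ once $h<h_\ell$.
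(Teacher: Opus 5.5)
Your Steps 1 and 2 match the paper. The coupling through common increments plus Lemma~\ref{convergent thm} gives the $(-\log h)^{-r(\eta-1)/2}$ term. The moment-difference contribution is controlled by $\ell!\,\Delta_k$ and Theorem~\ref{thm: moment error}, which is where $h_\ell$ picks up its $\ell$-dependence. The gap is in Step 3, which is exactly where the $\ell^{-s}$ rate has to come from, and your localization scale $\ell^{1/2-s}$ is on the wrong side of $\sqrt{\ell}$.

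\textbf{Why Step 3 fails as written.} Your Step 2 identity integrates $f-P$ against the full measures on $I_R$. If a degree-$\ell$ polynomial $P$ approximates $f$ only on a cube $I_{R'}$, then outside that cube $|P(x)|$ grows roughly like $R'(2|x|/R')^{\ell}$. So bounding the mass $\nu(I_{R'}^c)$ is not enough. The two measures are also not symmetric. For $\nu=\mu_{k,\ell}^{\RPC}$ you only know its moments of order at most $\ell$, bounded by $C_\delta\sqrt{|\gamma|!}$ through Assumption~\ref{asm:stability}(ii). Assumption~\ref{asm:positive measure} only gives existence of the matching measure, not any tail control. The best available estimate is therefore $\int_{I_{R'}^c}|P|\,d\nu\lesssim C^{\ell}R'\sqrt{\ell!}/(R')^{\ell}$. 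With $R'=\ell^{1/2-s}$ this is $\exp\bigl(s\ell\log\ell+O(\ell)\bigr)$, which diverges. Separately, Jackson's rate for degree $\ell$ on a cube of side $R'$ is $C_dR'/\ell$, not $R'/\sqrt{\ell}$. The restriction $s<\tfrac12$ comes from the tail terms, not from Jackson.

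\textbf{What the paper does.} It conditions both measures on the cube, $\bar\mu(\cdot)=\mu(\cdot\mid I_{R'})$. Lemma~\ref{moment matching} is then applied to measures supported on $I_{R'}$, so the approximating polynomial is never integrated outside it. This costs two explicit terms:
\begin{itemize}
\item the conditioning cost $W_1(\mu,\bar\mu)\lesssim\mathbb{E}|x|^{\ell}/(R')^{\ell-1}\lesssim\sqrt{\ell!}/(R')^{\ell-1}$;
\item the moment shift $|m^{\bar\mu}_\gamma-m^{\mu}_\gamma|\lesssim\sqrt{\ell!}/(R')^{\ell-|\gamma|}$, which enters Lemma~\ref{moment matching} multiplied by $3^{\ell}R'(R')^{-|\gamma|}$.
\end{itemize}
Both are negligible only when $R'\gg\sqrt{\ell}$. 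Taking $R'=\ell^{1-s}$ with $s<\tfrac12$, the Jackson term is $C_dR'/\ell=C\ell^{-s}$. The tail terms become $\exp\bigl(-(\tfrac12-s)\ell\log\ell+O(\ell)\bigr)$. The paper's proof writes this as $R'=\ell^{s}$ with $s\ge\tfrac12$ and obtains $\ell^{s-1}$; it is the same choice in a different parametrization.

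You could instead keep your unconditioned identity and bound the growth of $P$ outside the cube via Chebyshev growth and the order-$\le\ell$ moments, but you hit the same constraint $R'\gg\sqrt{\ell}$. Alternatively, keeping a small cube would force the polynomial degree down to about $(R')^2$, giving only the rate $1/R'$. Either way, you need to replace $\ell^{1/2-s}$ by a scale above $\sqrt{\ell}$ and make the localization mechanism explicit; as written, Step 3 does not yield the stated bound.
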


       In particular, for a prescribed error tolerance $\varepsilon>0$, we set $\ell = \left\lceil \left(\frac{1}{\varepsilon}\right)^{1/s} \right\rceil$, $h \le \exp\left(-\varepsilon^{-2/(r(\eta-1))}\right)$, $R = \delta^{-1/2}\bigl(-2\log h\bigr)^{1/2}$,  
        and take $L = \lfloor \lambda_1 R^\eta \rfloor$ and $S = \lfloor \lambda_2 R^\eta \rfloor$ as in Assumption~\ref{asm:drift diffusion terms}. Then, the $W_1$ distance between the distribution that matches the moments $\{m_{k,\gamma}^{\RPC}\}_{|\gamma|\leq \ell}$ and the distribution of the numerical solution $v_k$ of \eqref{Euler type} can be bounded by the tolerance $\varepsilon$.
        
        Under the proposed assumptions and for the above choice of parameters $(h,R,S,L)$, Theorem~\ref{thm: moment error} shows that the mRPC scheme can accurately compute the moments of $v^{R,S}$. Theorem~\ref{W1distance} then shows that a measure reconstructed from the mRPC moments remains close to the reference projected distribution of $v^{R,S}$ in the Wasserstein distance. 

    The proofs of Theorem~\ref{thm: moment error} and Theorem~\ref{W1distance}, together with the additional technical lemmas and results used in the analysis, are presented in Section \ref{sec:proof of main result}.

    \section{Convergence Analysis}
    \label{sec:proof of main result}
    In this section, we provide a more detailed discussion of the results presented in Section~\ref{sec:main convergence results}. In Section~\ref{sec:moment problem}, we review the moment problem, provide conditions for distribution determinacy, and give estimates for controlling the $W_1$ distance from moment estimates. In Section~\ref{convergenceresults}, we establish the exponential integrability of the solution $v_k^{R,S}$ defined in \eqref{projection scheme} and prove Lemma~\ref{convergent thm}. In Section~\ref{sec:errors from expectation approximation}, we present the details and proofs of Theorem~\ref{thm: moment error} and Theorem~\ref{W1distance}.
     We use $ \vert\cdot\vert $ to denote the Euclidean distance in $ \mathbb{R}^{d} $ and for a matrix $ A\in\mathbb{R}^{d\times m} $, $ |A| = \sqrt{\rm{tr}(AA^{T})} $.

    \subsection{Moment Problem and Orthonormal Polynomials}
    \label{sec:moment problem}
    Our moment-based method requires, as in any dynamical PCE-based approach \cite{Bal2,bal2017300}, the completeness of the orthogonal polynomials constructed at each time step. This is guaranteed under appropriate assumptions on the measures generated by the random variables $v_k$. See, e.g., \cite{akhiezer2020classical,ernst2012convergence,petersen1982relation} for references on the moment problem. We need the following definition.
    \begin{Def}[Hamburger moment problem]\label{moment problem}
        Given an infinite sequence of real numbers $\{m_\alpha\}_{\alpha\in\mathbb{N}_{0}^{d}}$, the moment problem is said to be uniquely solvable for a positive Radon measure $ \mu(dx) $ on $ (\mathbb{R}^{d}, \mathscr{B}(\mathbb{R}^{d})) $, or equivalently, the distribution is called determinate if it is uniquely determined by its moments $ m_{\alpha} = \int_{\mathbb{R}^{d}}x^\alpha\mu(dx),  \alpha \in \mathbb{N}^{d}_0 $.
    \end{Def}

    Many conditions ensure that a distribution $\mu$ is determinate, such as the distribution being compactly supported, or the moments satisfying certain growth conditions \cite{ernst2012convergence, akhiezer2020classical}. For instance, Gaussian and uniform distributions are determinate, whereas the log-normal distribution is a classic example of an indeterminate distribution. The moment problem is intrinsically linked to the density of polynomial spaces. Here, we collect two main results that are sufficient for our purpose. Denote by $L^p(\mathbb{R}^{d}, \mu)$ for $p\in [1,\infty)$ the standard space of $\mu$-measurable functions such that $ \int_{-\infty}^\infty |f(x)|^p  d\mu(x) < \infty $, and by $\mathcal{P}(\mathbb{R}^{d}) $ the linear space of polynomials defined on $ \mathbb{R}^{d} $. We have the following proposition.
    \begin{Prop}\label{dense thm}
        Let $\mu$ be a continuous distribution with finite moments of all orders on $(\mathbb{R}^{d}, \mathscr{B}(\mathbb{R}^{d}))$. Then $\mu$ is determinate if either of the following conditions holds:
        
        \text{(i)} $\mu$ has compact support;
        
        \text{(ii)} There exists $\alpha > 0$ such that 
        \begin{equation}
            \int_{\mathbb{R}^d} e^{\alpha|x|} d\mu(x) < \infty.
        \end{equation}
    \end{Prop}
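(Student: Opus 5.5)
Since $(i)\Rightarrow(ii)$ (if $\mathrm{supp}\,\mu\subset I_R$ then $\int e^{\alpha|x|}d\mu\le e^{\alpha\sqrt d R}<\infty$ for every $\alpha>0$), I will only prove determinacy under (ii). The continuity of $\mu$ plays no role in this argument. The plan is to reduce to one dimension by the Cramér--Wold device, and then to use analyticity of the characteristic function in a strip. Fix a direction $\theta\in\mathbb{R}^d$ with $|\theta|=1$, and let $\nu$ be any positive Radon measure with $\int x^\gamma d\nu=\int x^\gamma d\mu=:m_\gamma$ for all $\gamma$. Denote by $\mu_\theta,\nu_\theta$ the push-forwards under $x\mapsto \theta\cdot x$. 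The one-dimensional moments $\int s^n d\mu_\theta(s)=\sum_{|\gamma|=n}\binom{n}{\gamma}\theta^\gamma m_\gamma$ are polynomial combinations of the $m_\gamma$, so $\mu_\theta$ and $\nu_\theta$ have identical moments $M_n(\theta)$. (The $n=0$ moment also shows $\nu$ is a probability measure.)

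The first key step is to transfer the exponential integrability from $\mu$ to the a priori unknown $\nu$, using only the even moments. For $0\le t<\alpha$, monotone convergence gives
\begin{equation*}
\int \cosh(t\,\theta\cdot x)\,d\nu(x)=\sum_{n\ge0}\frac{t^{2n}M_{2n}(\theta)}{(2n)!}=\int\cosh(t\,\theta\cdot x)\,d\mu(x)\le\int e^{\alpha|x|}d\mu<\infty .
\end{equation*}
All terms here are nonnegative, so the interchange of sum and integral is legitimate. Hence both $\mu_\theta$ and $\nu_\theta$ satisfy $\int e^{t|s|}d(\cdot)<\infty$ for every $t<\alpha$. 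I expect this to be the main obstacle, since one must not assume any a priori integrability for the competing measure $\nu$. The cosh-series trick is what I will use to handle it.

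Next, I will show that the characteristic functions $\varphi_{\mu_\theta}(\xi)=\int e^{i\xi s}d\mu_\theta(s)$ and $\varphi_{\nu_\theta}$ extend holomorphically to the strip $\{|\mathrm{Im}\,\xi|<\alpha\}$. This follows from dominated convergence with dominating function $e^{t|s|}$ for $|\mathrm{Im}\,\xi|<t<\alpha$. Expanding $e^{i\xi s}$ and using $\int e^{t|s|}<\infty$, both functions equal the power series $\sum_n (i\xi)^n M_n(\theta)/n!$ on the disk $|\xi|<\alpha$. They therefore coincide on that disk, and by the identity theorem on the connected strip they coincide on $\mathbb{R}$. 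By Lévy's uniqueness theorem, $\mu_\theta=\nu_\theta$ for every $\theta$.

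Finally, equality of all one-dimensional projections gives $\varphi_\mu(\xi)=\varphi_{\mu_{\xi/|\xi|}}(|\xi|)=\varphi_\nu(\xi)$ for all $\xi\in\mathbb{R}^d$. This is the Cramér--Wold device, and it yields $\mu=\nu$. Hence $\mu$ is determinate.
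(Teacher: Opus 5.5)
Your proof is correct and complete. The paper gives no proof of this proposition; it states it as a classical fact and cites the moment-problem literature (Akhiezer, Ernst et al., Petersen).

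The route implicit in those references has two steps. First, one checks a one-dimensional determinacy criterion (an analytic moment generating function, or Carleman's condition) for each coordinate marginal; (ii) supplies this because $|x_i|\le|x|$. Second, one invokes Petersen's theorem that a measure on $\mathbb{R}^d$ with determinate coordinate marginals is itself determinate.

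You instead use \emph{all} directional projections $\mu_\theta$, so the multivariate step becomes the elementary Cram\'er--Wold device rather than Petersen's theorem. The price is that you must identify $\nu_\theta$ with $\mu_\theta$ directly, and your argument handles this: the $\cosh$-series transfers exponential integrability to the unknown competitor $\nu$, and analytic continuation across the strip finishes it. In effect this is a proof of the one-dimensional criterion.

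So your route gives a self-contained proof of exactly the statement the paper uses. The citation route gives more generality, since Carleman-type growth conditions on the marginals suffice. The same references also give density of polynomials in $L^p(\mu)$, which the paper alludes to just before the proposition. Your strip-analyticity argument would deliver that density with little change: apply it to the finite signed measure $g\,d\mu$, where $g\in L^2(\mu)$ is orthogonal to all polynomials, using $\int e^{t|x|}|g|\,d\mu<\infty$ for $2t\le\alpha$.

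Your side remarks are also correct: (i) is a special case of (ii), and continuity of $\mu$ plays no role.
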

    
    We then consider the truncated moment problem, which concerns the correspondence between a given truncated moment set $\{m_{\gamma}\}_{|\gamma|\le q}$ of order up to $q$ and a distribution. Specifically, the problem asks whether there exists a distribution whose moments of order $\gamma$ coincide with $m_{\gamma}$ for all $|\gamma|\leq q$. For a given $\hat{R}>0$, denote $I_{\hat{R}} = [-\hat R,\hat R]^{d}$. Let $\mathcal{P}_L(I_{\hat R}) =\mathrm{span}\{ x^\gamma : |\gamma|\le L,\; x\in I_{\hat R}\}$ denote the space of real polynomials in $d$ variables of degree at most $L$ defined on $I_{\hat R}$. Define a linear functional $\mathcal{T}_L : \mathcal{P}_L(I_{\hat{R}}) \to \mathbb{R}$ by its action on the basis monomials:
    \begin{equation}
    \label{eq32}
        \mathcal{T}_L(x^\gamma) = m_{\gamma} \quad\text{for all  } |\gamma|\le L.
    \end{equation}
    We have the following proposition; see \cite{petersen1982relation,schmudgen2017moment} for details.
    \begin{Prop}\label{truncated determinant}
        If $\mathcal{T}_L$ satisfies $\mathcal{T}_L(p) \ge 0$ for every polynomial $p \in \mathcal{P}_L(I_{\hat R})$ that is nonnegative on $I_{\hat R}$, then there exists a positive Radon measure $\mu_L$ supported on $I_R$ such that for all $|\gamma| \le L$,
        \begin{equation*}
            m_{\gamma} = \int_{\mathbb{R}^d} x^\gamma  d\mu_L.
        \end{equation*}
    \end{Prop}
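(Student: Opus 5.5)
The proposition is a compact-support version of the Riesz--Haviland theorem. I would prove it by extending $\mathcal{T}_L$ to a positive linear functional on $C(I_{\hat R})$ and then applying the Riesz--Markov representation theorem. The measure obtained is supported on $I_{\hat R}$; the $I_R$ in the statement should be read as $I_{\hat R}$.

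\emph{Step 1: well-definedness.} Set $K=I_{\hat R}$. Since $K$ has nonempty interior, a polynomial vanishing identically on $K$ is the zero polynomial. Hence the map from $\mathcal{P}_L(\mathbb{R}^d)$ to $\mathcal{P}_L(K)$, viewed as a space of functions on $K$, is injective. So $\mathcal{T}_L$, defined on monomials by \eqref{eq32} and extended linearly, is a well-defined linear functional on the subspace $E:=\mathcal{P}_L(K)\subset C(K)$. This subspace contains the constant function $1$, with $\mathcal{T}_L(1)=m_0$. The hypothesis says exactly that $\mathcal{T}_L$ is positive on $E$ with respect to the pointwise order on $K$.

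\emph{Step 2: positive extension (main step).} This is M.~Riesz's extension theorem, which I would prove directly via Hahn--Banach. For $f\in C(K)$ define
\[
p(f)=\inf\{\mathcal{T}_L(e): e\in E,\ e\ge f \text{ on } K\}.
\]
\begin{itemize}
\item The set is nonempty, since $e=\|f\|_\infty\cdot 1\ge f$.
\item $p(f)>-\infty$: if $e\ge f\ge -\|f\|_\infty$, then $e+\|f\|_\infty\ge 0$ on $K$, and positivity gives $\mathcal{T}_L(e)\ge -\|f\|_\infty m_0$.
\item $p$ is sublinear. Positive homogeneity is immediate, and subadditivity follows by adding dominating elements.
\item $p=\mathcal{T}_L$ on $E$: if $e\ge g$ with $g\in E$, positivity gives $\mathcal{T}_L(e)\ge\mathcal{T}_L(g)$, and $e=g$ attains this.
\end{itemize}
Hahn--Banach then gives a linear extension $\widetilde{\mathcal{T}}$ on $C(K)$ with $\widetilde{\mathcal{T}}\le p$. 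For positivity, take $f\ge 0$. Then $-f\le 0$, so $p(-f)\le \mathcal{T}_L(0)=0$, and hence $\widetilde{\mathcal{T}}(f)=-\widetilde{\mathcal{T}}(-f)\ge -p(-f)\ge 0$. I expect this step to need the most care, namely checking that $p$ is finite and sublinear. The key point is that $1\in E$, which makes $E$ cofinal in $C(K)$.

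\emph{Step 3: representation.} A positive linear functional on $C(K)$ with $K$ compact is automatically bounded, with $\|\widetilde{\mathcal{T}}\|=\widetilde{\mathcal{T}}(1)=m_0$. The Riesz--Markov theorem then gives a unique positive Radon measure $\mu_L$ on $K$ such that $\widetilde{\mathcal{T}}(f)=\int_K f\,d\mu_L$ for all $f\in C(K)$. Extending $\mu_L$ by zero outside $K$ and taking $f=x^\gamma$ with $|\gamma|\le L$ yields $m_\gamma=\int_{\mathbb{R}^d}x^\gamma\,d\mu_L$, as claimed. If the $m_\gamma$ arise from a probability normalization, so that $m_0=1$, then $\mu_L$ is a probability measure.
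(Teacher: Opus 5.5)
Your argument is correct. Because $1\in E$, the gauge $p(f)=\inf\{\mathcal{T}_L(e): e\in E,\ e\ge f \text{ on } I_{\hat R}\}$ is finite and sublinear, the dominated Hahn--Banach extension is positive, and Riesz--Markov on $I_{\hat R}$ produces the representing measure; your reading of $I_R$ as $I_{\hat R}$ is also right. The paper gives no proof of its own but cites Schm\"udgen's monograph, whose argument is essentially the same M.~Riesz extension plus Riesz representation route, so your proof follows the intended one.
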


    Now, we analyze the error between two probability distributions given the error in their truncated moments.  We introduce the following lemma, which can be viewed as a d-dimensional generalization of a result in \cite{kong2017spectrum}; the proof, which relies mainly on techniques from \cite{musco2025sharper}, is deferred to Section~\ref{proof of Lemma 4.6}. 
    \begin{Lemma}\label{moment matching}
        Let $\mu$ and $\nu$ be two probability measures supported on $I_{\hat R} = [-\hat R, \hat R]^d$. For a multi-index $\gamma$, the $\gamma$-th moments of $\mu$ and $\nu$ are denoted by $m^\mu_{\gamma}$ and $m^\nu_{\gamma}$, respectively. If all moments up to total degree $q$ are given, then the Wasserstein distance $W_1(\mu, \nu)$ is bounded by:
        \begin{equation}\label{eq:boundW1}
            W_1(\mu, \nu) \leq \frac{C_d  \hat R}{q}
            + g(q)\hat R
              \sqrt{\sum_{|\gamma| \leq q}
              \left( \frac{m^\mu_{\gamma} - m^\nu_{\gamma}}{\hat R^{|\gamma|}} \right)^{2}},
        \end{equation}
        where $g(q) = C_d 3^q$ and $C_d$ depends only on $d$. 
    \end{Lemma}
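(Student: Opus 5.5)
The plan is to use duality: $W_1(\mu,\nu)=\sup\{\int f\,d(\mu-\nu): \mathrm{Lip}(f)\le 1\}$ (Kantorovich--Rubinstein). Both measures live on $I_{\hat R}$, so we may assume $f(0)=0$, which gives $|f|\le \sqrt d\,\hat R$ on $I_{\hat R}$. Rescaling $x=\hat R y$ moves everything to $[-1,1]^d$. Writing $\tilde\mu,\tilde\nu$ for the pushforwards, we get $W_1(\mu,\nu)=\hat R\,W_1(\tilde\mu,\tilde\nu)$, and the moments transform as $\tilde m_\gamma = m_\gamma/\hat R^{|\gamma|}$. This explains the normalization inside the square root of \eqref{eq:boundW1} and the overall factor $\hat R$. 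It therefore suffices to prove the case $\hat R=1$, namely
\[
W_1(\tilde\mu,\tilde\nu)\le \frac{C_d}{q}+C_d3^q\Bigl(\sum_{|\gamma|\le q}(\tilde m^\mu_\gamma-\tilde m^\nu_\gamma)^2\Bigr)^{1/2}.
\]

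\textbf{Step 1 (Jackson approximation).} For a 1-Lipschitz $f$ on $[-1,1]^d$, I will construct a polynomial $p$ of total degree at most $q$ with $\|f-p\|_{L^\infty([-1,1]^d)}\le C_d/q$. The construction follows \cite{musco2025sharper}: substitute $x_i=\cos\theta_i$ and convolve $f(\cos\theta)$ with a tensor product of Jackson kernels of degree about $q/d$ in each variable. This gives a polynomial of total degree at most $q$ with uniform error $O(d/q)$, since $f\circ\cos$ is still Lipschitz. We then obtain
\[
\Bigl|\int f\,d(\tilde\mu-\tilde\nu)\Bigr|\le 2\|f-p\|_\infty+\Bigl|\int p\,d(\tilde\mu-\tilde\nu)\Bigr|,
\]
and the first term produces $C_d/q$.

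\textbf{Step 2 (coefficient bound).} Expand $p=\sum_{|\gamma|\le q}c_\gamma x^\gamma$. Cauchy--Schwarz gives
\[
\Bigl|\int p\,d(\tilde\mu-\tilde\nu)\Bigr|\le \Bigl(\sum_\gamma c_\gamma^2\Bigr)^{1/2}\Bigl(\sum_\gamma(\tilde m^\mu_\gamma-\tilde m^\nu_\gamma)^2\Bigr)^{1/2}.
\]
The key task is to show $(\sum c_\gamma^2)^{1/2}\le C_d3^q$. Since the Jackson convolution is a contraction in the Chebyshev coefficients, $p$ has Chebyshev expansion $p=\sum_{\alpha} a_\alpha\prod_i T_{\alpha_i}(x_i)$ with $|a_\alpha|\le C\|f\|_\infty\le C_d$. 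A known bound on monomial coefficients gives $\sum_j |t_{n,j}|\le(1+\sqrt2)^n\le 3^n$ for the Chebyshev polynomial $T_n$. Taking tensor products, the monomial coefficients of $\prod_i T_{\alpha_i}(x_i)$ have $\ell^1$-norm at most $3^{|\alpha|}$. Summing over the at most $\binom{q+d}{d}$ indices $\alpha$ yields
\[
\|c\|_2\le\|c\|_1\le C_d\binom{q+d}{d}3^q.
\]

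\textbf{Main obstacle.} The combinatorial factor $\binom{q+d}{d}\sim q^d$ must be absorbed. I would first try slack in the base: either use $(1+\sqrt2)^q q^d\le C_d3^q$, which holds since $1+\sqrt2<3$, or bound the Chebyshev coefficients more sharply using their decay. Either route puts the final constant in the form $g(q)=C_d3^q$.

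Finally, take the supremum over $f$ and undo the scaling to obtain \eqref{eq:boundW1}.
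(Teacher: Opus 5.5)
Your proposal is correct and follows essentially the same route as the paper. Both arguments rescale to $[-1,1]^d$, apply Kantorovich--Rubinstein duality, replace $f$ by the multivariate Jackson/Chebyshev approximant of \cite{musco2025sharper} (whose Chebyshev coefficients are dominated by those of $f$), and bound the pairing with $\tilde\mu-\tilde\nu$ by Cauchy--Schwarz against the moment differences, using the $(1+\sqrt2)^{n}$ growth of Chebyshev monomial coefficients. The only difference is bookkeeping: you bound each Chebyshev coefficient by $2^d\|f\|_\infty$ and absorb the $\binom{q+d}{d}$ count into the gap $1+\sqrt2<3$, whereas the paper uses Parseval and a Frobenius-norm bound on the Kronecker change-of-basis matrix; your direct convolution construction also makes the paper's preliminary smoothing of $f$ unnecessary.
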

    
    \subsection{Errors from polynomial approximation}
    \label{convergenceresults}
    This section establishes estimates for the reference solution $v_k^{R,S}$ in \eqref{projection scheme}. We first present an exponential convergence result for the numerical solutions obtained via \eqref{Euler type} and \eqref{projection scheme}. The detailed proof is provided in Appendix $\ref{proof of Lemma 3.3}$.
    \begin{Lemma}\label{determinate lemma2}
        Under Assumptions \ref{asm:drift diffusion terms} and \ref{asm:initial condition}, there exist constants $\hat \delta > 0$, $C> 0$, and $h^* > 0$, depending on $ K $, $ T $, and $\delta_0$, such that for all $h\in(0,h^*)$, we have
        \begin{equation} 
            \sup_{0 \leq k \leq n} \mathbb{E}\left[\exp\left(\hat\delta |v_k|^2\right)\right] \leq C.
        \end{equation}
        Moreover, if $R$ and $S$ satisfy $RS^{-1} \le 1$, then
        \begin{equation} 
            \sup_{0 \leq k \leq n} \mathbb{E}\left[\exp\left(\hat\delta |v^{R,S}_k|^2\right)\right] \leq C.
        \end{equation}
    \end{Lemma}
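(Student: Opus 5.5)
We prove both bounds by a one-step exponential moment recursion: we show that $\phi_k(\delta_k):=\mathbb{E}[\exp(\delta_k|v_k|^2)]$ satisfies $\phi_{k+1}(\delta_{k+1})\le e^{Ch}\phi_k(\delta_k)$ along a slowly decreasing sequence of exponents $\delta_{k+1}\le\delta_k$ with $\delta_n\ge\hat\delta>0$. The starting point is $\delta_0$ from Assumption~\ref{asm:initial condition}, and iterating over $n=T/h$ steps gives $\phi_n\le e^{CT}\phi_0$.

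\textbf{Step 1 (conditioning on $v_k$).} Write $v_{k+1}=a_k+\sqrt h\,\sigma_h(v_k)z_k$ with $a_k=v_k+hb_h(v_k)$, where $z_k\sim N(0,I_m)$ is independent of $v_k$. For a fixed matrix $A$ with $|A|\le K$ and $2\delta h K^2<1$, the Gaussian integral gives
\[
\mathbb{E}_z\exp\bigl(\delta|a+\sqrt h Az|^2\bigr)\le (1-2\delta hK^2)^{-m/2}\exp\Bigl(\frac{\delta|a|^2}{1-2\delta hK^2}\Bigr).
\]
This uses $\sup|\sigma_h|<K$ from Assumption~\ref{asm:drift diffusion terms}(i); boundedness of the diffusion is exactly what makes the Gaussian moment finite. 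Linear growth gives $|a_k|^2\le(1+h)^2|v_k|^2+\ldots\le (1+C h)|v_k|^2+Ch$ after a Young inequality. Combining, we get
\[
\mathbb{E}[e^{\delta|v_{k+1}|^2}\mid v_k]\le e^{C\delta h}\exp\bigl(\delta(1+C'h)|v_k|^2\bigr)
\]
for $h<h^*$.

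\textbf{Step 2 (choice of exponents).} Set $\delta_{k+1}$ so that $\delta_{k+1}(1+C'h)=\delta_k$, i.e.\ $\delta_k=\delta_0(1+C'h)^{-k}\ge\delta_0e^{-C'T}=:\hat\delta$. Then $\phi_{k+1}(\delta_{k+1})\le e^{C\delta_0h}\phi_k(\delta_k)$, and monotonicity in $\delta$ yields the uniform bound with $\hat\delta$ and $C=e^{C\delta_0T}\mathbb{E}e^{\delta_0|u_0|^2}$. The constraint $2\delta hK^2<1$ fixes $h^*$ depending on $K$ and $\delta_0$.

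\textbf{Step 3 (the truncated scheme).} For $v^{R,S}_k$, $\chi_R$ is 1-Lipschitz with $\chi_R(0)=0$, so $|\chi_R(y)|\le|y|$ and the truncation only lowers $|v^{R,S}_{k+1}|$; in particular $|v_0^{R,S}|\le|u_0|$. It then suffices to repeat Steps 1--2 with $p_b^{R,S},p_\sigma^{R,S}$ in place of $b_h,\sigma_h$.

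The main obstacle is that the polynomials are not globally of linear growth or bounded. However, $v_k^{R,S}\in I_R$ always (the image of $\chi_R$ lies in $I_R$ and $\chi_R(u_0)\in I_R$), so only the values on $I_R$ matter. By Assumption~\ref{asm:drift diffusion terms}(ii), on $I_R$ we have $|p_\sigma^{R,S}|\le K+CR^rS^{-r}\le K+C$ and $|p_b^{R,S}(x)|\le K(|x|+1)+C$ when $RS^{-1}\le1$. These are the same structural bounds with $K$ replaced by $K+C$, so the constants stay independent of $R$ and $S$, and the recursion closes with the same $\hat\delta$ after shrinking $h^*$ if necessary.
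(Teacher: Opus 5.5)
Your proposal is correct and follows essentially the paper's argument. The Gaussian moment-generating-function bound, using $\sup|\sigma_h|<K$, gives the one-step inequality $\mathbb{E}[e^{\delta|v_{k+1}|^2}\mid v_k]\le e^{C\delta h}e^{\delta(1+C'h)|v_k|^2}$. This is then iterated with the exponent changed by a factor $(1+C'h)$ per step — inflated backward in the paper, deflated forward in yours — giving $\hat\delta=\delta_0e^{-C'T}$. For $v_k^{R,S}$ the paper likewise relies on $|\chi_R(y)|\le|y|$ and on the approximation bounds on $I_R$ with $R^rS^{-r}\le 1$. Your forward bookkeeping with $\delta_k\le\delta_0$, together with your explicit observation that $v_k^{R,S}\in I_R$, makes the uniformity of the constants a bit more transparent than in the paper's write-up.
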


    We introduce the following truncated Euler-type numerical scheme:
    \begin{equation}\label{turncation scheme}
        \begin{aligned}
            v^{R}_{k+1} =\chi_R\left(v^{R}_{k} + b_{h}(v^{R}_k)h + \sigma_{h}(v^{R}_{k}) z_k\sqrt{h}\right), \quad v^{R}_0 &= \chi_{R}(u_{0}).
        \end{aligned}
    \end{equation}
    We establish the following error estimates for the truncated scheme \eqref{turncation scheme}, whose detailed proof is presented in Appendix $\ref{proof of Lemma 3.4}$.    
    \begin{Lemma}\label{Lemma 1}
        Under Assumptions \ref{asm:drift diffusion terms} and \ref{asm:initial condition}, there exist positive constants $ \delta > 0 $, $ C > 0 $, and $h^*>0$, depending on $ K $, $ T $, and $\delta_0$, such that for all $h\in(0,h^*)$, the solution of the numerical scheme (\ref{turncation scheme}) satisfies
        \begin{equation}
            \max_{0\leq k\leq n}\left(\mathbb{E}|v_k- v^{R}(t_{k})|^{2}\right)^{\frac{1}{2}} \leq Ce^{-\delta R^{2}}.
        \end{equation}
    \end{Lemma}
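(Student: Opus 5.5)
The plan is a standard truncation-versus-tail coupling. We run $v_k$ from \eqref{Euler type} and $v_k^R$ from \eqref{turncation scheme} with the same $u_0$ and the same Gaussian increments $z_k$. The error $e_k := v_k - v_k^R$ is nonzero only because of the cutoff $\chi_R$, and the cutoff is active only when the untruncated one-step update leaves $I_{R/3}$. Write $w_k := v_k^R + b_h(v_k^R)h + \sigma_h(v_k^R)z_k\sqrt h$, so that $v_{k+1}^R = \chi_R(w_k)$. Then
\[
e_{k+1} = \bigl(v_k - v_k^R\bigr) + h\bigl(b_h(v_k)-b_h(v_k^R)\bigr) + \sqrt h\bigl(\sigma_h(v_k)-\sigma_h(v_k^R)\bigr)z_k + \bigl(w_k - \chi_R(w_k)\bigr).
\]
The last term vanishes on $\{w_k\in I_{R/3}\}$ and is bounded by $2|w_k|$ in general, since $|\chi_R(x)|\le|x|$ by the Lipschitz property and $\chi_R(0)=0$.

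First I bound the linear part. Square and take expectations. Independence of $z_k$ from $\mathscr F_{t_k}$ kills the cross term with the martingale increment, and the Lipschitz bounds of Assumption~\ref{asm:drift diffusion terms}(i) then give
\[
\mathbb E|e_k + h(\cdots) + \sqrt h(\cdots)z_k|^2 \le (1+Ch)\,\mathbb E|e_k|^2 .
\]
For the remainder $r_k := w_k-\chi_R(w_k)$, I use $(a+b)^2\le(1+h)a^2+(1+h^{-1})b^2$ to get
\[
\mathbb E|e_{k+1}|^2 \le (1+Ch)\,\mathbb E|e_k|^2 + Ch^{-1}\,\mathbb E|r_k|^2 .
\]
Discrete Gronwall over $n=T/h$ steps, together with the initial bound $\mathbb E|u_0-\chi_R(u_0)|^2 \le 4\,\mathbb E[|u_0|^2\mathbf 1_{|u_0|_\infty>R/3}]$, then yields
\[
\max_k \mathbb E|e_k|^2 \le C\Bigl(\mathbb E\bigl[|u_0|^2\mathbf 1_{\{|u_0|>R/3\}}\bigr] + h^{-2}\max_k \mathbb E\bigl[|w_k|^2\mathbf 1_{\{|w_k|>R/3\}}\bigr]\Bigr).
\]
The main obstacle is bounding the tail term uniformly and absorbing the $h^{-2}$ factor. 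I would first establish a uniform exponential moment $\sup_k\mathbb E[e^{\hat\delta|w_k|^2}]\le C$. This follows from the argument of Lemma~\ref{determinate lemma2}, applied to the truncated chain, since $|\chi_R(x)|\le|x|$ preserves the bound. Cauchy–Schwarz with Chebyshev then gives
\[
\mathbb E\bigl[|w_k|^2\mathbf 1_{\{|w_k|>R/3\}}\bigr] \le \bigl(\mathbb E|w_k|^4\bigr)^{1/2}\,\mathbb P\bigl(|w_k|>R/3\bigr)^{1/2} \le C e^{-\hat\delta R^2/18}.
\]
The same estimate holds for $u_0$ by Assumption~\ref{asm:initial condition}.

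The factor $h^{-2}$ would destroy a bound that is uniform in $h$, so I would avoid the crude splitting. The event that the chain ever exits $I_{R/3}$ matters only once: before exit the two chains coincide exactly, because $\chi_R$ is the identity on $I_{R/3}$, and both $v_k$ and $v_k^R$ are updated by the same Lipschitz map. So I introduce the stopping index $\tau := \min\{k : w_{k}\notin I_{R/3}\text{ or } u_0\notin I_{R/3}\}$. Then $e_k=0$ for $k\le\tau$, and after $\tau$ I use the crude bound $|e_k|\le |v_k|+|v_k^R|$. This gives
\[
\mathbb E|e_k|^2 \le \bigl(\mathbb E(|v_k|+|v_k^R|)^4\bigr)^{1/2}\,\mathbb P(\tau<k)^{1/2}.
\]
The fourth moments are uniformly bounded by Lemma~\ref{determinate lemma2}. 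For the exit probability, a union bound gives
\[
\mathbb P(\tau<n) \le \sum_{k<n}\mathbb P\bigl(|v_k^{\mathrm{untrunc}}|>R/3\bigr) \le nCe^{-\hat\delta R^2/9}.
\]
The polynomial prefactor $n=T/h$ is the delicate point. I would handle it with a maximal inequality instead: apply Doob's inequality to the exponential submartingale built from the interpolated Euler chain, which bounds $\mathbb E\sup_k e^{\delta'|v_k|^2}$ uniformly in $h$ for some smaller $\delta'$. If that fails, I would accept the $h^{-1}$ factor and absorb it by reducing $\delta$, which is adequate in the regime $R\gtrsim(-\log h)^{1/2}$ used later. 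Taking square roots then gives $\max_k(\mathbb E|v_k-v_k^R|^2)^{1/2}\le Ce^{-\delta R^2}$.
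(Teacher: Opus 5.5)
Your architecture matches the paper's. You couple the two chains through the same $u_0$ and $z_k$, note that they coincide up to the first exit of the untruncated Euler chain from $I_{R/3}$, and bound the rest by Cauchy--Schwarz: uniformly bounded fourth moments times $\mathbb{P}(\max_{k\le n}|v_k|>R/3)^{1/2}$. Your preliminary Gronwall computation can simply be dropped.

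The gap is the one step that actually carries the lemma: an $h$-uniform bound on that exit probability. You leave it at ``apply Doob's inequality to the exponential submartingale built from the interpolated Euler chain,'' but you never exhibit such a process, and the natural candidate fails. In general $e^{\theta|v_k|^2}$ is neither a sub- nor a supermartingale. A contracting drift such as $b(x)=-x$ makes it decrease in conditional mean for large $|v_k|$. On the other side, the Gaussian computation behind Lemma~\ref{determinate lemma2} only gives $\mathbb{E}[e^{\theta|v_{k+1}|^2}\mid\mathscr F_k]\le\exp(\theta(1+C_1h)|v_k|^2+C_1h)$, which inflates the exponent at every step.

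Your fallback does not prove the stated result either. The union bound leaves a factor $(T/h)^{1/4}$ in $(\mathbb{E}|v_k-v_k^R|^2)^{1/2}$, which diverges as $h\to0$ for fixed $R$. The lemma claims $Ce^{-\delta R^2}$ with $C$ independent of both $h$ and $R$, so restricting to $R\gtrsim(-\log h)^{1/2}$ proves a different statement.

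The missing idea is to pay for the inflation with a decreasing exponent. Pick $\delta$ with $\delta e^{C_1T}\le\hat\delta$ and set $\Theta_k=\delta(1+C_1h)^{n-k}$. The one-step bound with $\theta=\Theta_{k+1}$ then reads $\mathbb{E}[e^{\Theta_{k+1}|v_{k+1}|^2}\mid\mathscr F_k]\le e^{\Theta_k|v_k|^2+C_1h}$, so $M_k=\exp(\Theta_k|v_k|^2-C_1kh)$ is a nonnegative supermartingale. Since $\Theta_k\ge\delta$ and $C_1kh\le C_1T$, the event $\{\max_{k\le n}|v_k|>R/3\}$ lies inside $\{\max_{k\le n}M_k\ge e^{\delta R^2/9-C_1T}\}$. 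The maximal inequality for nonnegative supermartingales then gives $\mathbb{P}(\max_{k\le n}|v_k|>R/3)\le e^{C_1T}\,\mathbb{E}[e^{\Theta_0|u_0|^2}]\,e^{-\delta R^2/9}$. This is finite and $h$-independent by Assumption~\ref{asm:initial condition}, because $\Theta_0\le\hat\delta\le\delta_0$. This is exactly how the paper closes the argument, and no time interpolation is needed.
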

    From Lemma~\ref{Lemma 1}, we establish the convergence result, namely that the numerical scheme (\ref{projection scheme}) can effectively approximate the solution of the originally proposed numerical scheme (\ref{Euler type}).
    \begin{proof}[Proof of Lemma~\ref{convergent thm}]
        Let $\hat{v}^{R}_{k} =v^{R}_{k} + b_{h}(v^{R}_{k})h$ and $\hat{v}^{R,S}_{k} =v^{R,S}_{k} + b_{h}(v^{R,S}_{k})h$. From the formulation of the numerical schemes (\ref{turncation scheme}) and (\ref{projection scheme}), and using the Lipschitz property of $\chi_R$, we obtain
        \begin{equation}\label{thm2 eq2}
            \begin{aligned}
                &\mathbb{E}|v_{k+1}^{R} - v^{R,S}_{k+1}|^{2} = \mathbb{E}|\chi_{R}\left( \hat{v}^{R}_{k} + \sigma_{h}(v_{k}^{R}) z_{k}\sqrt{h} \right) - \chi_{R}\left( \hat{v}^{R,S}_{k} + p_{\sigma}^{R,S}(v_{k}^{R,S}) z_{k}\sqrt{h} \right) |^{2}\\
                \leq & \mathbb{E}|\hat{v}^{R}_{k} - \hat{v}^{R,S}_{k} + (\sigma_{h}(v_{k}^{R})- p_{\sigma}^{R,S}(v_{k}^{R,S}) z_{k}\sqrt{h}|^{2}\\
                =  & \mathbb{E}|v^{R}_{k} - v^{R,S}_{k} + b_{h}(v_{k}^{R})h - p_{b}^{R,S}(v_{k}^{R,S})h|^{2} + h\mathbb{E}|\sigma_{h}(v_{k}^{R})- p_{\sigma}^{R,S}(v_{k}^{R,S}) |^{2}.
            \end{aligned}
        \end{equation} 
        By the triangle inequality, the Lipschitz properties, and Assumption $\ref{asm:drift diffusion terms}$, the first expectation on the last line of equation \eqref{thm2 eq2} can be bounded by
                \begin{equation}\label{thm2 eq1}
            \begin{aligned}
                &\left[\left(\mathbb{E}|v^{R}_{k} - v^{R,S}_{k} + b_{h}(v_{k}^{R})h - b_{h}(v_{k}^{R,S})h|^{2}\right)^{\frac{1}{2}} + \left(\mathbb{E}|b_{h}(v_{k}^{R,S}) - p_{b}^{R,S}(v_{k}^{R,S})h|^{2}\right)^{\frac{1}{2}}\right]^{2}\\
                \leq &\left[(1+Kh)\left(\mathbb{E}|v_{k}^{R} - v_{k}^{R,S}|^{2}\right)^{\frac{1}{2}} + hCRS^{-1}\right]^{2}\leq (1+h)(1+Kh)^{2}\mathbb{E}|v_{k}^{R} - v_{k}^{R,S}|^{2} + Ch(1+h)R^{r}S^{-r}.
            \end{aligned} 
        \end{equation}
        Here, we use the following type of inequality $ (\sqrt{a} + h\sqrt{b} )^{2} \leq a + 2h\sqrt{ab} + h^{2}b\leq a + h(a+b) + h^{2}b =  (1+h)a + h(1+h)b $ for the last estimate in \eqref{thm2 eq1}. The second expectation on the last line of equation (\ref{thm2 eq2}) can be bounded by
        \begin{equation}\label{thm2 eq4}
            \begin{aligned}
                &2\big[\mathbb{E}|\sigma_{h}(v_{k}^{R})- \sigma_{h}(v_{k}^{R,S})|^{2}+\mathbb{E}|\sigma_{h}(v_{k}^{R,S})-p_{\sigma}^{R,S}(v_{k}^{R,S})|^{2}\big]
            \leq  2K^{2}\mathbb{E}|v_{k}^{R} - v_{k}^{R,S}|^{2} + 2CR^{r}S^{-r}.
            \end{aligned}
        \end{equation}
        Combining equations (\ref{thm2 eq2}), (\ref{thm2 eq1}), (\ref{thm2 eq4}), there exists a constant $ C $, depending on $ K $ and $ T $, such that
        \begin{equation*}
            \mathbb{E}|v_{k+1}^{R} - v^{R,S}_{k+1}|^{2}\leq (1+Ch)\mathbb{E}|v_{k}^{R} - v_{k}^{R,S}|^{2} + ChR^{r}S^{-r}.
        \end{equation*}
        By iteration, we obtain
        \begin{equation*}
            \begin{aligned}
                \mathbb{E}|v_{k}^{R} - v^{R,S}_{k}|^{2}< e^{CT}CR^{r}S^{-r} \cdot\frac{1+CT}{C}.
            \end{aligned}
        \end{equation*}
        Then, from Lemma \ref{Lemma 1}, we obtain the final estimate via the triangle inequality.
    \end{proof}

    \begin{Remark} \rm
        With an appropriate contraction-type Lipschitz condition on the drift and diffusion terms, Lemma~\ref{convergent thm} can be generalized to the long-time regime, so that the error bound holds uniformly for all $k \geq 0$. We refer to \cite{yuan2004stability,weng2019invariant,bal2017300} for the corresponding assumptions and techniques.
    \end{Remark}

    \subsection{Errors from expectation approximation}\label{sec:errors from expectation approximation}
    In this section, we present an error analysis of the mRPC method implemented in Algorithm~\ref{algorithm} and prove our main results, Theorem $\ref{thm: moment error}$ and Theorem $\ref{W1distance}$.
    
    At time step $k$, let $\{ m_{k,\gamma}^{R,S} \}_{|\gamma|\leq J}$ denote the moments of $v_k^{R,S}$ up to order $2L+S$. Let $\{ m_{k,\gamma}^{\mathrm{RPC}} \}_{|\gamma|\leq J}$ be the moments computed via the mRPC method. We denote by $\{T^{R,S}_\alpha(x)\}_{|\alpha| \leq L}$ the resulting family of orthonormal polynomials and $\{ \Gamma^{R,S}_{\alpha\beta\gamma} \}_{\multiset}$ the triple products computed from $\{ m_{k,\gamma}^{R,S} \}_{|\gamma|\leq J}$. Denote by $\{T^{\mathrm{RPC}}_{k,\alpha}\}_{|\alpha|\leq L}$ and $\{\Gamma^{\mathrm{RPC}}_{k,\alpha\beta\gamma}\}_{(\alpha,\beta,\gamma)\in\ILS}$ the corresponding orthonormal polynomials and triple products computed by mRPC moments.
    
    We introduce an intermediate mRPC procedure that starts from the moments $\{ m_{\gamma}^{R,S} \}_{|\gamma|\leq J}$ and applies one step of the mRPC evolution with all expectations approximated by the $\mathcal{E}$ operator using $\{T^{R,S}_{k,\alpha}\}_{|\alpha|\leq L}$ and $\{\Gamma^{R,S}_{k,\alpha\beta\gamma}\}_{(\alpha,\beta,\gamma)\in\ILS}$.  This yields the intermediate moments $\{ m^{\m}_{k+1,\gamma} \}_{|\gamma|\leq J}$. At time step $t_{k+1}$, we thus have three distinct sets of moments. Their relationships are summarized as follows:
    \begin{equation*}\label{evolution relation}
        \begin{aligned}
            \{ m_{k,\gamma}^{R,S} \}_{|\gamma|\le J} 
            &\xrightarrow{\eqref{projection scheme}} 
            \{ m_{k+1,\gamma}^{R,S} \}_{|\gamma|\le J}, \\
            \{ m_{k,\gamma}^{\RPC} \}_{|\gamma|\le J} 
            &\xrightarrow{\mRPC} 
            \{ m_{k+1,\gamma}^{\RPC} \}_{|\gamma|\le J}, \\
            \{ m_{k,\gamma}^{R,S} \}_{|\gamma|\le J} 
            &\xrightarrow{\mRPC} 
            \{ m_{k+1,\gamma}^{\m} \}_{|\gamma|\le J}.
        \end{aligned}
    \end{equation*}   
    Using the distance $d(\cdot,\cdot)$ defined in \eqref{metric}, we quantify the approximation errors as follows:
    \begin{equation}\label{defined errors}
        \begin{aligned}
            \Delta_{k} &= d\left( \{ m_{k,\gamma}^{R,S} \}_{|\gamma|\le J}, \{ m_{k,\gamma}^{\RPC} \}_{|\gamma|\le J} \right), \\
            \Delta^{(1)}_{k+1} &= d\left( \{m_{k+1,\gamma}^{R,S} \}_{|\gamma|\le J}, \{ m_{k+1,\gamma}^{\m} \}_{|\gamma|\le J} \right), \\
            \Delta^{(2)}_{k+1} &= d\left( \{ m_{k+1,\gamma}^{\RPC} \}_{|\gamma|\le J}, \{ m_{k+1,\gamma}^{\m} \}_{|\gamma|\le J} \right).
        \end{aligned}
    \end{equation}
    Here, $\Delta_k$ measures the total error accumulated up to step $k$. $\Delta^{(1)}_{k+1}$ is the one-step RPC error, which bounds how the error at step $k$ (the difference between $\{ m_{k,\gamma}^{R,S} \}_{|\gamma|\le J}$ and $\{ m_{k,\gamma}^{\RPC} \}_{|\gamma|\le J}$) propagates under the same mRPC update rule. $\Delta^{(2)}_{k+1}$ is the accumulation error, which bounds the discrepancy introduced by performing one mRPC evolution instead of the exact numerical scheme \eqref{projection scheme}, starting from the same moment set $\{ m_{k,\gamma}^{R,S} \}_{|\gamma|\le J}$. By the triangle inequality, $\Delta_{k+1}\leq \Delta_{k+1}^{(1)} + \Delta_{k+1}^{(2)}$.

    In the following proof, we assume that $R$ and $S$ are chosen such that $R^rS^{-r}$ is sufficiently small in \eqref{asm:drift diffusion terms} (in particular, $R^rS^{-r}\le 1$ and Lemma~\ref{determinate lemma2} holds), so that the solution $v_k^{R,S}$ of the projected scheme \eqref{projection scheme} remains close to the exact solution $v_k$ of the Euler-type scheme \eqref{Euler type} by Theorem~\ref{convergent thm}.

    We first prove the following two lemmas on moment bounds.
    \begin{Lemma}\label{scaled_moment_bound}
        Under Assumptions \ref{asm:drift diffusion terms} and \ref{asm:initial condition}, there exist constants $C_{\delta}$ and  $h^*>0 $,  depending only on $K$, $T$, and $\delta_0$, such that for all multi-indices $\gamma$ and $h\in(0,h^*)$,
        \begin{equation}
        \label{lemma310}
            \sup_{0\leq k\leq n}|\mathbb{E}[(v_k^{R,S})^\gamma]| \le \sup_{0\leq k\leq n}\left(|\mathbb{E}[|v_k^{R,S}|^{2|\gamma|}]|\right)^{\frac{1}{2}}\leq C_{\delta} \sqrt{|\gamma|!}.
        \end{equation}
    \end{Lemma}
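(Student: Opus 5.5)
The first inequality in \eqref{lemma310} follows from Cauchy--Schwarz. Using $|x^\gamma| \le |x|^{|\gamma|}$ (each $|x_i|\le |x|$), we get
\[
|\mathbb{E}[(v_k^{R,S})^\gamma]| \le \mathbb{E}|v_k^{R,S}|^{|\gamma|} \le \bigl(\mathbb{E}|v_k^{R,S}|^{2|\gamma|}\bigr)^{1/2}.
\]
The substance is therefore the second inequality, a Gaussian-type moment bound. I would derive it from the exponential square-integrability established in Lemma~\ref{determinate lemma2}, namely $\sup_k \mathbb{E}[\exp(\hat\delta |v_k^{R,S}|^2)] \le C$. This holds for $h<h^*$ under the standing choice $R^rS^{-r}\le 1$ made just above, which in particular covers $RS^{-1}\le 1$ as required by that lemma.

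The key step is an elementary comparison of polynomial and Gaussian moments. Write $p=|\gamma|$. Expanding the exponential and keeping only the $j=p$ term of the series gives
\[
\frac{\hat\delta^{p}\,|x|^{2p}}{p!} \le \sum_{j\ge0}\frac{\hat\delta^{j}|x|^{2j}}{j!} = e^{\hat\delta |x|^2}.
\]
Hence
\[
\mathbb{E}|v_k^{R,S}|^{2p} \le p!\,\hat\delta^{-p}\,\mathbb{E}[e^{\hat\delta|v_k^{R,S}|^2}] \le C\,p!\,\hat\delta^{-p}.
\]
Taking square roots yields $\bigl(\mathbb{E}|v_k^{R,S}|^{2p}\bigr)^{1/2} \le \sqrt{C}\,\hat\delta^{-p/2}\sqrt{p!}$. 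This holds uniformly in $k$, because the bound of Lemma~\ref{determinate lemma2} is a supremum over $0\le k\le n$.

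The main obstacle is the factor $\hat\delta^{-p/2}$, which must be absorbed into a constant $C_\delta$ that does not depend on $|\gamma|$. If $\hat\delta\ge1$ this is immediate, since then $\hat\delta^{-p/2}\le 1$. Otherwise I would rescale the exponential bound. Lemma~\ref{determinate lemma2} gives some $\hat\delta$, and what we need is integrability of $e^{|x|^2}$, or a normalization of the variable. One option is to argue that the proof of Lemma~\ref{determinate lemma2} can be run with $\hat\delta\ge 1$ when $\delta_0$ is large, but that is not generally available. The honest alternative is to read the constant as depending on $\hat\delta$ only, with the convention that $C_\delta$ absorbs $\hat\delta^{-|\gamma|/2}$ through the scaling $v\mapsto \hat\delta^{1/2}v$ of moments. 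Concretely, I would state the bound as $C_\delta\,\hat\delta^{-|\gamma|/2}\sqrt{|\gamma|!}$. I would then note that in the metric \eqref{metric} and in Assumption~\ref{asm:stability}(ii) this geometric factor is harmless, because $c^{p}\sqrt{p!}/p!$ is still summable. I would flag this as the point needing care, and otherwise assume, as the statement asserts, that $\hat\delta$ can be normalized to $1$ so that the bound reads exactly $C_\delta\sqrt{|\gamma|!}$.
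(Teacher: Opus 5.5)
Your argument is the paper's argument: $|x^\gamma|\le|x|^{|\gamma|}$ and Cauchy--Schwarz give the first inequality. Lemma~\ref{determinate lemma2} together with $e^{\hat\delta|x|^2}\ge\hat\delta^{p}|x|^{2p}/p!$ then gives $(\mathbb{E}|v_k^{R,S}|^{2p})^{1/2}\le\sqrt{C}\,\hat\delta^{-p/2}\sqrt{p!}$ with $p=|\gamma|$. You are right that this is not the stated bound. The proof of Lemma~\ref{determinate lemma2} takes $\hat\delta<1$ (indeed $\hat\delta\le\min\{\delta_0e^{-CT},1\}$), so $\hat\delta^{-p/2}$ grows geometrically in $p$. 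The paper closes the step by remarking that $\sqrt{|\gamma|!}$ outgrows any $\delta^{-|\gamma|}$, but that does not follow. It only gives $\hat\delta^{-p/2}\le C_\epsilon(p!)^{\epsilon}$ for each $\epsilon>0$, i.e.\ a bound $C_\epsilon(p!)^{1/2+\epsilon}$, not $C\sqrt{p!}$. So the difficulty you flagged is also present, unresolved, in the paper's proof.

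Your fallback of ``normalizing $\hat\delta$ to $1$'' does not repair it. The scaling $v\mapsto\hat\delta^{1/2}v$ bounds moments of a different random variable, and nothing in the hypotheses permits the assumption. In fact no argument can close this step, because the statement as written is false. Take $d=1$, any admissible $b_h,\sigma_h$, and $u_0\sim N(0,1)$, which satisfies Assumption~\ref{asm:initial condition} for every $\delta_0<1/2$. At $k=0$, $v_0^{R,S}=\chi_R(u_0)$ equals $u_0$ on $\{|u_0|\le R/3\}$. Hence $\mathbb{E}|v_0^{R,S}|^{2p}\ge\mathbb{E}[|u_0|^{2p}\mathbf{1}_{\{|u_0|\le R/3\}}]\to(2p-1)!!$ as $R\to\infty$, whereas
\[
\frac{(2p-1)!!}{p!}=\frac{1}{2^{p}}\binom{2p}{p}\ge\frac{2^{p}}{2p+1}\longrightarrow\infty .
\]
The statement requires $C_\delta$ to depend only on $K,T,\delta_0$ and not on $R$, and $R=\delta^{-1/2}(-2\log h)^{1/2}\to\infty$ in the main theorems. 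So no admissible constant exists.

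The correct lemma is the one you actually proved, $\sup_k(\mathbb{E}|v_k^{R,S}|^{2|\gamma|})^{1/2}\le C\,\hat\delta^{-|\gamma|/2}\sqrt{|\gamma|!}$. State it in that form rather than assume the normalization. It is also the form the paper itself uses in Lemma~\ref{moment turncation error} and in the higher-order term of Lemma~\ref{lem:local_error1}. It suffices downstream: for $|\gamma|\le 2L+2S$ the extra factor is at most $\hat\delta^{-(L+S)}\le e^{cR^\eta}$. That factor is absorbed by $\sqrt{(2L+1)!}\ge C\exp(cR^\eta\log R-cR^\eta)$ in the proof of Theorem~\ref{thm: moment error}.
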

    \begin{proof}
        By Lemma~\ref{determinate lemma2}, there exist constants $\delta>0$ and $C>0$ such that
        \begin{equation}
            \sup_{0\le k\le n} \mathbb{E}\left[\exp\left(\delta |v_k^{R,S}|^2\right)\right] \le C.
        \end{equation}
        For any multi-index $\gamma$, using the inequality $\exp(\delta |v|^2) \ge \frac{(\delta |v|^2)^{|\gamma|}}{|\gamma|!}$ and the Cauchy-Schwarz inequality, we obtain
        \begin{equation}
            \label{eq:moment bdd est}
            \big|\mathbb{E}[(v_k^{R,S})^\gamma]\big| \le \big(\mathbb{E}[|v_k^{R,S}|^{2|\gamma|}]\big)^{1/2} \le \big(|\gamma|! \delta^{-|\gamma|} \mathbb{E}[e^{\delta |v_k^{R,S}|^2}]\big)^{1/2} \le \sqrt{C |\gamma|!} \delta^{-|\gamma|/2}.
        \end{equation}
        Since $\sqrt{|\gamma|!}$ grows faster than any exponential function of the form $\delta^{-|\gamma|}$ for any fixed $\delta>0$, estimate \eqref{eq:moment bdd est} implies \eqref{lemma310}.
    \end{proof}
    
     \begin{Lemma}\label{mRPC bound}
        For all $0\le k\le n-1$, the approximate expectation $\mathcal{E}$ satisfies the following. If $|\gamma| \le 2L+S$, then
        \begin{equation*}
            \Ecal[(v_k^{R,S})^\gamma] = \mathbb{E}[(v_k^{R,S})^\gamma];
        \end{equation*}
        if $2L+S < |\gamma| \le 2L+2S$, then there exists a decomposition $\gamma = \gamma_1 + \gamma_2$ with $|\gamma_1|, |\gamma_2| \le 2L+S$ such that
        \begin{equation*}
            \big|\Ecal[(v_k^{R,S})^\gamma]\big| \le \bigl(\mathbb{E}[(v_k^{R,S})^{2\gamma_1}]\bigr)^{1/2} \bigl(\mathbb{E}[(v_k^{R,S})^{2\gamma_2}]\bigr)^{1/2}.
        \end{equation*}
        Here, the approximation $\mathcal{E}$ of the expectation is computed using $\{T^{R,S}_\alpha(x)\}_{|\alpha| \leq L}$ and $\{ \Gamma^{R,S}_{\alpha\beta\gamma} \}_{\multiset}$.
    \end{Lemma}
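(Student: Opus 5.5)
The plan is to read off how $\mathcal{E}$ acts on a monomial of $v_k^{R,S}$ when it is computed from the orthonormal polynomials $\{T^{R,S}_\alpha\}_{|\alpha|\le L}$ and the triple products $\{\Gamma^{R,S}_{\alpha\beta\gamma}\}_{(\alpha,\beta,\gamma)\in\ILS}$. Both families are built from the exact moments $\{m^{R,S}_{k,\gamma}\}_{|\gamma|\le J}$ with $J=2L+S$. Hence every stored triple product is an exact expectation, $\Gamma^{R,S}_{\alpha\beta\gamma}=\mathbb{E}[T^{R,S}_\alpha T^{R,S}_\beta T^{R,S}_\gamma(v_k^{R,S})]$. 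Each $\Gamma$ with $(\alpha,\beta,\gamma)\in\ILS$ involves only monomials of total degree at most $|\alpha|+|\beta|+|\gamma|\le J$, so it is computed exactly from the available moments.

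\emph{First case, $|\gamma|\le 2L+S$.} I would factor $x^\gamma=x^{\gamma_a}x^{\gamma_b}x^{\gamma_c}$ with $|\gamma_a|\le S$ and $|\gamma_b|,|\gamma_c|\le L$. This is possible since $|\gamma|\le S+2L$: fill coordinates greedily. Each factor expands exactly in the basis, $x^{\gamma_b}=\sum_{|\beta|\le L}c_\beta T^{R,S}_\beta$ and $x^{\gamma_c}=\sum_{|\eta|\le L}c'_\eta T^{R,S}_\eta$. The degree-$\le S$ factor is handled through the sparse projection \eqref{eq:sparse projection operator}, possibly after splitting $S$-part into pieces of degree $\le L$, which is fine since $\lambda_1\ge\lambda_2$ gives $S\le L$. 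Then $\mathcal{E}[x^\gamma]$ is a trilinear combination $\sum c_\alpha c_\beta c'_\eta\,\Gamma^{R,S}_{\alpha\beta\eta}$ in which only indices with nonzero coefficients appear. Those satisfy $|\alpha|\le|\gamma_a|$, $|\beta|\le|\gamma_b|$ and $|\eta|\le|\gamma_c|$ by the triangular structure of Gram--Schmidt, so $(\alpha,\beta,\eta)\in\ILS$. Substituting the exact $\Gamma$ and using linearity of $\mathbb{E}$ gives $\mathbb{E}[(v_k^{R,S})^\gamma]$ exactly.

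\emph{Second case, $2L+S<|\gamma|\le 2L+2S$.} Here $\mathcal{E}$ is evaluated recursively. The product rule \eqref{product calculation} projects a product onto $\mathrm{span}\{T^{R,S}_\eta\}_{|\eta|\le L}$, with coefficients that are exact inner products by the first case. Concretely, I would split $\gamma=\gamma_1+\gamma_2$ with $|\gamma_1|,|\gamma_2|\le 2L+S$ (possible since $|\gamma|\le 2(2L+S)$). I would then show that the computation amounts to $\mathcal{E}[x^\gamma]=\mathbb{E}[\Pi(x^{\gamma_1})\,\Pi'(x^{\gamma_2})]$. Here $\Pi,\Pi'$ are the $L^2(\mu_k^{R,S})$-orthogonal projections onto polynomial subspaces, computed exactly since the relevant degrees are $\le J$. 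Cauchy--Schwarz then gives $|\mathcal{E}[x^\gamma]|\le\|\Pi x^{\gamma_1}\|_2\|\Pi' x^{\gamma_2}\|_2$. Since orthogonal projections are contractions, this is at most $(\mathbb{E}[(v_k^{R,S})^{2\gamma_1}])^{1/2}(\mathbb{E}[(v_k^{R,S})^{2\gamma_2}])^{1/2}$.

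The main obstacle is the second case: pinning down precisely which projected products the recursive $\mathcal{E}$ arithmetic forms when the degree exceeds $J$. I must verify that it really is a single inner product of two exact orthogonal projections. Nested truncations could compose non-orthogonal operations. I would handle this by ordering the multiplications so that the last step is one inner product $\sum_\eta w_{1,\eta}w_{2,\eta}$, where $w_1,w_2$ are exact projection coefficients of $x^{\gamma_1}$ and $x^{\gamma_2}$. Each of these is obtained from the first case, since $\langle x^{\gamma_i},T_\eta\rangle$ has degree $\le 2L+S$. With that ordering, Bessel's inequality finishes the bound.
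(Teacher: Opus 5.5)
Your argument follows the paper's proof. In the first case you split $\gamma$ into pieces of degree $\le S$, $\le L$, $\le L$, so every triple product used by $\mathcal{E}$ lies in $\mathcal{J}_{L,S}$ and is exact. The paper organizes the same trilinear sum as projecting $x^{\gamma_1}x^{\gamma_2}$ and then pairing with $x^{\gamma_3}$. In the second case you write $\mathcal{E}[x^\gamma]=\sum_{|\eta|\le L}w_{1,\eta}w_{2,\eta}$ and finish with Cauchy--Schwarz and Bessel. Your remark about splitting the $S$-part is unnecessary: since $S\le L$, $x^{\gamma_a}$ expands directly in $\{T^{R,S}_\alpha\}_{|\alpha|\le L}$.

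There is one degree count in the second case that is wrong as written. You allow any split with $|\gamma_1|,|\gamma_2|\le 2L+S$ and then assert that $\langle x^{\gamma_i},T^{R,S}_\eta\rangle$ has degree $\le 2L+S$. This fails for unbalanced splits: with $|\gamma_1|=2L+S$ and $|\eta|=L$, the degree is $3L+S$. That coefficient is then not covered by the first case. Inside the algorithm it would have to be produced through nested truncations such as $\Pi_{k,L}\bigl(\Pi_{k,L}(x^{\gamma_c}x^{\gamma_a})\,x^{\gamma_b}\bigr)$, which is not the orthogonal projection of $x^{\gamma_1}$. The Bessel step, which needs exact projection coefficients, then breaks.

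The fix is to use $|\gamma|\le 2L+2S$ to take a balanced split, exactly as the paper does: $\gamma_1=\gamma_a+\gamma_c$ and $\gamma_2=\gamma_b+\gamma_d$ with $|\gamma_a|,|\gamma_b|\le L$ and $|\gamma_c|,|\gamma_d|\le S$. Then $|\gamma_i|+|\eta|\le 2L+S$. For example, $w_{1,\eta}=\sum_{\alpha,\beta}p_{\gamma_c\alpha}\,p_{\gamma_a\beta}\,\Gamma^{R,S}_{\alpha\beta\eta}$, where $x^{\gamma_c}=\sum_\alpha p_{\gamma_c\alpha}T^{R,S}_\alpha$ and $x^{\gamma_a}=\sum_\beta p_{\gamma_a\beta}T^{R,S}_\beta$. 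Every contributing triple satisfies $(\alpha,\beta,\eta)\in\mathcal{J}_{L,S}$, so $w_{1,\eta}=\mathbb{E}[(v_k^{R,S})^{\gamma_1}T^{R,S}_\eta(v_k^{R,S})]$ exactly, and similarly for $w_{2,\eta}$. The resulting bound still meets the statement's constraint, since $L+S\le 2L+S$.
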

    \begin{proof}
        First, if $|\gamma| \leq 2L + S$, let $\gamma = \gamma_1 + \gamma_2 + \gamma_3$ with $|\gamma_1|, |\gamma_2| \leq L$ and $ |\gamma_3| \leq S$. Assume that the expansions of $(v_k^{R,S})^{\gamma_1}$ and $(v_k^{R,S})^{\gamma_2}$ in the orthonormal polynomials are given respectively for $ i = 1, 2 ,3 $ by $(v_k^{R,S})^{\gamma_i} = \sum_{|\alpha| \leq L} p_{\gamma_i\alpha} T^{R,S}_\alpha(v_k^{R,S})$.
        
        In the computation of $\mathcal{E}^{R,S}$, only the triple products $\{ \Gamma^{R,S}_{\alpha\beta\gamma} \}_{\multiset}$ are used. Denote the projection of the multiplication $(v_k^{R,S})^{\gamma_1}(v_k^{R,S})^{\gamma_2}$ into the orthonormal basis by
        \begin{equation}
            (v_k^{R,S})^{\gamma_1}(v_k^{R,S})^{\gamma_2}=\sum_{|\eta|\leq 2L}q_{\eta}T_{\eta}^{R,S}(v_k^{R,S}).
            \end{equation}
        Then, since $ p_{\gamma_{3}\eta} = 0 $ for $ |\eta| > S $
        \begin{equation*}
            \begin{aligned}
                &\mathcal{E}\big[(v_k^{R,S})^\gamma\big] 
                = \mathcal{E}\left[(v_k^{R,S})^{\gamma_1}(v_k^{R,S})^{\gamma_2}(v_k^{R,S})^{\gamma_3}\right]\\
                = &\mathcal{E}\big[\sum_{|\eta|\leq S}q_{\eta}T_{\eta}^{R,S}(v^{R,S}_{k})\sum_{|\eta|\leq S}p_{\gamma_3\eta}T_{\eta}^{R,S}(v^{R,S}_{k})\big]
                =\sum_{\substack{|\eta|\leq S}} q_{\eta}p_{\gamma_{3}\eta} =\sum_{\substack{|\eta|\leq 2L}} q_{\eta}p_{\gamma_{3}\eta} \\=& \E\big[\sum_{|\eta|\leq 2L}q_\eta T_{\eta}^{R,S}(v^{R,S}_{k})\sum_{|\eta|\leq S}p_{\gamma_3\eta}T_{\eta}^{R,S}(v^{R,S}_{k})\big]                 =\E\big[(v_k^{R,S})^{\gamma_1}(v_k^{R,S})^{\gamma_2}(v_k^{R,S})^{\gamma_3}\big] = \E\big[(v_k^{R,S})^\gamma\big].
            \end{aligned}
        \end{equation*}
    
        When $ 2L+S +1 \leq |\gamma| \leq 2L+2S $, let $\gamma = \gamma_1 + \gamma_2 + \gamma_3 + \gamma_{4}$ with $|\gamma_1|, |\gamma_2| \leq L$ and $|\gamma_3|,|\gamma_{4}| \leq S$. Denote 
$(v_k^{R,S})^{\gamma_1}(v_k^{R,S})^{\gamma_3}=\sum_{|\eta|\leq L+S}q^{(1)}_{\eta}T_{\eta}^{R,S}(v_k^{R,S})$ and $(v_k^{R,S})^{\gamma_2}(v_k^{R,S})^{\gamma_4}=\sum_{|\eta|\leq L+S}q^{(2)}_{\eta}T_{\eta}^{R,S}(v_k^{R,S})$. We have  
        \begin{equation*}
            \begin{aligned}
                &|\mathcal{E}\big[(v_k^{R,S})^\gamma\big]| 
                =|\mathcal{E}\left[(v_k^{R,S})^{\gamma_1}(v_k^{R,S})^{\gamma_2}(v_k^{R,S})^{\gamma_3}(v_k^{R,S})^{\gamma_4}\right]| \\=&|\mathcal{E}\big[\sum_{|\eta|\leq L}q^{(1)}_\eta T_{\eta}^{R,S}(v^{R,S}_{k})\sum_{|\eta|\leq L}q^{(2)}_\eta T_{\eta}^{R,S}(v^{R,S}_{k})\big]|
                =\sum_{|\eta|\leq L}q^{(1)}_\eta q^{(2)}_\eta 
                \leq \big[\sum_{|\eta|\leq L}\big(q^{(1)}_\eta\big)^{2}\big]^{\frac12}\big[\sum_{|\eta|\leq L}\big(q^{(2)}_\eta\big)^{2}\big]^{\frac12} 
                \\\leq& \big[\E\big[\big(\sum_{|\eta|\leq L+S}q^{(1)}_\eta T_{\eta}^{R,S}(v^{R,S}_{k})\big)^{2}\big] \big]^{\frac{1}{2}}\big[\E\big[\big(\sum_{|\eta|\leq L+S}q^{(2)}_\eta T_{\eta}^{R,S}(v^{R,S}_{k})\big)^{2}\big] \big]^{\frac{1}{2}}\\
                =& \big[\E\big[(v_k^{R,S})^{2\gamma_1+2\gamma_{3}}\big]\big]^{1/2}\big[\E\big[(v_k^{R,S})^{2\gamma_2+2\gamma_{4}}\big]\big]^{1/2},
            \end{aligned}
        \end{equation*}
        which yields the desired estimate.
        \end{proof}

    A direct corollary of Lemma~\ref{scaled_moment_bound} and Lemma~\ref{mRPC bound} is that, for all multi-indices with $|\gamma|\leq 2L+2S$,
    \begin{equation*}
        \begin{aligned}
            |\Ecal[(v_k^{R,S})^\gamma]| &\le \bigl(\mathbb{E}^{R,S}[|v_k^{R,S}|^{2|\gamma_1|}]\bigr)^{1/2} \bigl(\mathbb{E}[|v_k^{R,S}|^{2|\gamma_2|}]\bigr)^{1/2}\\& \leq C\left( |\gamma_1|!\delta^{-|\gamma_1|}\right)^{1/2}\left(|\gamma_2|!\delta^{-|\gamma_2|}\right)^{1/2}
            \leq C\delta^{-(|\gamma_1| +|\gamma_2|)/2}\sqrt{|\gamma|!}.
        \end{aligned}
    \end{equation*}
    We still write this as $\sup_{0\leq k\leq n}|\Ecal[(v_k^{R,S})^\gamma]|\le C_{\delta} \sqrt{|\gamma|!}$.

    In accordance with \eqref{projection scheme}, let $v_{k+1}^{S} = v_k^{R,S} + h  p_b(v_k^{R,S}) + \sqrt{h}  p_\sigma(v_k^{R,S})  z_k$ denote one step of the exact numerical scheme without truncation operator $\chi_R$. Assume that the corresponding moment set of $v_k^S$ is $\{m_{k,\gamma}^{S}\}_{|\gamma|\leq J}$. We prove the following bound.

    \begin{Lemma}\label{moment turncation error}
        Under Assumptions \ref{asm:drift diffusion terms} and \ref{asm:initial condition}, there exist constants $C,\delta>0$, and $h^*>0$,  depending on $K$ and $T$, such that for all $h\in(0,h^*)$,
        \begin{equation}\label{polynomial truncation error}
           \sup_{0\leq k\leq n-1} d\bigl(\{ m_{k+1,\gamma}^{R,S} \}_{|\gamma|\le J},  \{ m_{k+1,\gamma}^{S} \}_{|\gamma|\le J}\bigr)\leq Ce^{-\delta R^2}. 
        \end{equation}
    \end{Lemma}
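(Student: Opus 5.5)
We write $w := v_k^{R,S} + h\,p_b(v_k^{R,S}) + \sqrt{h}\,p_\sigma(v_k^{R,S})z_k$, so that $v^{S}_{k+1}=w$ and $v^{R,S}_{k+1}=\chi_R(w)$. The plan is to bound each moment difference $|\mathbb{E}[\chi_R(w)^\gamma]-\mathbb{E}[w^\gamma]|$ by the contribution of the event where $\chi_R$ is not the identity, and then sum with the $1/|\gamma|!$ weights.

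\textbf{Step 1: localize the difference.} Since $\chi_R(x)=x$ on $I_{R/3}$, the two random variables coincide on $\{w\in I_{R/3}\}$. Hence
\[
|m^{R,S}_{k+1,\gamma}-m^{S}_{k+1,\gamma}|\le \mathbb{E}\bigl[(|\chi_R(w)^\gamma|+|w^\gamma|)\mathbf 1_{\{|w|_\infty>R/3\}}\bigr].
\]
By construction of $\chi_R$ we have $|\chi_R(w)|\le |w|$ coordinatewise, so this is at most $2\,\mathbb{E}[|w|^{|\gamma|}\mathbf 1_{\{|w|>R/3\}}]$. Applying Cauchy--Schwarz gives
\[
\le 2\bigl(\mathbb{E}|w|^{2|\gamma|}\bigr)^{1/2}\,\mathbb{P}(|w|>R/3)^{1/2}.
\]

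\textbf{Step 2: exponential integrability of $w$.} We need $\mathbb{E}[e^{\delta'|w|^2}]\le C$ uniformly in $k$ for some $\delta'>0$. This is the one-step part of the proof of Lemma~\ref{determinate lemma2}, which applies because $R S^{-1}\le 1$.

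\textbf{Main obstacle.} The bound in Step 2 is not automatic, since $p_b$ is a polynomial of degree up to $S$ rather than linear growth. I would handle it on the support $I_R$ of $v_k^{R,S}$ by writing $p_b=b_h+(p_b-b_h)$ and $p_\sigma=\sigma_h+(p_\sigma-\sigma_h)$. On $I_R$, Assumption~\ref{asm:drift diffusion terms}(ii) makes the perturbation at most $CR^rS^{-r}\le C$, so $|p_b(x)|\le K(|x|+1)+C$ and $|p_\sigma|\le K+C$ there. Given these bounds, the Gaussian computation $\mathbb{E}[e^{\delta'|x+hb+\sqrt h\sigma z|^2}]\le e^{\delta'(1+Ch)|x|^2+Ch}$ for small $h$, combined with the uniform bound of Lemma~\ref{determinate lemma2} on $v_k^{R,S}$, gives the claim.

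\textbf{Step 3: tail and moment bounds.} From Step 2, Markov's inequality gives $\mathbb{P}(|w|>R/3)\le Ce^{-\delta' R^2/9}$. Arguing exactly as in Lemma~\ref{scaled_moment_bound}, we also get $\mathbb{E}|w|^{2|\gamma|}\le C\,|2\gamma|!\,\delta'^{-|\gamma|}$ in the $|\gamma|$-dependence, so that $(\mathbb{E}|w|^{2|\gamma|})^{1/2}\le C\sqrt{(2|\gamma|)!}\,\delta'^{-|\gamma|/2}$.

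\textbf{Step 4: summation.} Plugging Steps 1 and 3 into the truncated metric \eqref{metric}, the $d$-distance is bounded by
\[
Ce^{-\delta R^2}\sum_{|\gamma|\le J}\frac{\sqrt{(2|\gamma|)!}\,\delta'^{-|\gamma|/2}}{|\gamma|!}, \qquad \delta=\delta'/18.
\]
Since $\sqrt{(2n)!}\le 2^n n!$, each term is at most $(2/\sqrt{\delta'})^{n}$ with $n=|\gamma|$. The number of multi-indices of order $n$ is polynomial in $n$, so the sum may grow like $c^{J}$ with $J=2L+S\sim R^\eta$.

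This growth is the delicate point, since $\eta<2$ only makes it harmless if it is absorbed. To avoid it, I would instead use the sharper moment bound $\mathbb{E}|w|^{2n}\le C\,n!\,\delta'^{-n}$, which follows from $e^{\delta'|w|^2}\ge(\delta'|w|^2)^n/n!$. This gives terms $\sqrt{n!}\,\delta'^{-n/2}/n!=\delta'^{-n/2}/\sqrt{n!}$, which are summable over all $n$ even after including the polynomial count of multi-indices. The series then converges to a constant independent of $J$, and we obtain $Ce^{-\delta R^2}$ uniformly in $k$, which is \eqref{polynomial truncation error}.
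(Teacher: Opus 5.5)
Your proposal is correct and follows essentially the same route as the paper. Both arguments establish exponential integrability of the untruncated one-step variable $v^S_{k+1}$ through the Gaussian moment-generating-function computation on the compact support $I_R$, use Cauchy--Schwarz to split into a tail probability times $(\mathbb{E}|w|^{2|\gamma|})^{1/2}$, apply $\mathbb{E}|w|^{2n}\le C\,n!\,\delta'^{-n}$, and sum the convergent series $\sum_n \delta'^{-n/2}/\sqrt{n!}$. Your version is slightly more careful in three places: you keep the factor $2$ from $|\chi_R(w)^\gamma|\le|w|^{|\gamma|}$ on the exceptional event (this follows from $\chi_R$ being $1$-Lipschitz with $\chi_R(0)=0$), you correctly get $e^{-\delta' R^2/18}$ after the square root, and you make explicit how the polynomial coefficients are controlled on $I_R$ via Assumption~\ref{asm:drift diffusion terms}(ii).
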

    \begin{proof}
        We first show that the distribution of $v_{k+1}^{S}$ remains exponentially integrable. Since $v_k^{R,S}$ is compactly supported on $I_R$, by a similar argument as in \eqref{exp bdd eq1}, we have
        \begin{equation*}
            \E\left[ e^{\delta |v_{k+1}^{S}|^2} \right] \leq \exp\left( \delta(1+ Ch) |v^{R,S}_k|^2 + Ch \right),
        \end{equation*}
        where the constant $C$ depends on $K$ and $T$ and may vary from line to line. Using the exponential integrability of $v_k^{R,S}$ from Lemma~\ref{determinate lemma2} yields $\E\left[ e^{\delta |v_{k+1}^{S}|^2} \right] \leq C$.

        We then estimate the difference between the moments. Note that $v_{k+1}^{R,S}=v_{k+1}^{S}$ on the set $\{ |v_{k+1}^{S}| \le R/3 \}$. By Lemma~\ref{scaled_moment_bound}, we have
        \begin{equation*}
            \begin{aligned}
                |m^{S}_{k+1,\gamma} - m^{R,S}_{k+1,\gamma}| &= | \E[ (v_{k+1}^{S})^\gamma ] - \E[(v_{k+1}^{R,S})^\gamma]| \leq | \E[|v_{k+1}^{S}|^{|\gamma|}\mathds{1}_{\{v^S_{k+1}\in I^c_{R/3}\}}]|\\
                &\leq \left(\mathbb{E}|\mathds{1}_{\{v^{S}_{k+1}\in I^{c}_{R/3}\}}|^{2}\right)^{\frac{1}{2}}\left(\mathbb{E}|v_{k+1}^{S}|^{2|\gamma|}\right)^{\frac{1}{2}}\\
                &\leq \left[\mathbb{E}\left(\frac{\exp(\delta|v^{S}_{k+1}|^{2})}{\exp(\delta R^{2}/9)}\cdot \mathds{1}_{\{v^{S}_{k+1}\in I^{c}_{R/3}\}}\right)\right]^{\frac{1}{2}}\cdot \left(C\delta^{-|\gamma|}|\gamma|!\right)^{\frac{1}{2}} \\
                &\leq C^{\frac{3}{2}}e^{-\delta R^2 / 9} \delta^{-|\gamma|/2}\sqrt{|\gamma|!}.
            \end{aligned}
        \end{equation*}
        Thus, $d\bigl(\{ m_{k+1,\gamma}^{R,S} \}_{|\gamma|\le J},  \{ m_{k+1,\gamma}^{S} \}_{|\gamma|\le J}\bigr)$ is bounded by 
        $Ce^{-\delta R^{2}}\sum_{|\gamma|\leq J} \frac{\delta^{-|\gamma|/2}}{\sqrt{|\gamma|!}}\leq Ce^{-\delta R^{2}}$.
    \end{proof}
    
    Next, we obtain the following lemma for the RPC error $ \Delta_{k}^{(1)} $.
    
    \begin{Lemma} \label{lem:local_error1}
        For all $0 \le k \le n-1$, let $\Delta_k^{(1)}$ be defined in \eqref{defined errors}. Under Assumptions \ref{asm:drift diffusion terms} and \ref{asm:initial condition}, there exist constants $C,\delta>0$ and $h^*>0$, depending on $K$ and $T$, such that for all $h\in(0,h^*)$, 
        \begin{equation}\label{eq:local_error1}
           \Delta_{k+1}^{(1)}\leq \frac{2C_{\delta} C_{S}C_{b,\sigma}R^\eta h}{\sqrt{(2L+1)!}}  + Ce^{-\delta R^2} + Ch^2,
        \end{equation}
        where $C_{S} =\binom{S+d}{S}$.
    \end{Lemma}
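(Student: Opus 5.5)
We compare, term by term, the exact one-step moments of $v_{k+1}^{R,S}$ with the intermediate mRPC moments $m^{\m}_{k+1,\gamma}$ obtained from the same starting moments $\{m^{R,S}_{k,\gamma}\}$. The triangle inequality gives
\[
\Delta^{(1)}_{k+1}\le d\bigl(\{m^{R,S}_{k+1,\gamma}\},\{m^{S}_{k+1,\gamma}\}\bigr)+d\bigl(\{m^{S}_{k+1,\gamma}\},\{m^{\m}_{k+1,\gamma}\}\bigr).
\]
By Lemma~\ref{moment turncation error}, the first term is at most $Ce^{-\delta R^2}$. This leaves the comparison between the untruncated one-step scheme $v^S_{k+1}=v_k^{R,S}+hp_b(v_k^{R,S})+\sqrt h\,p_\sigma(v_k^{R,S})z_k$ and the mRPC update \eqref{eqfirst_order}.

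For the second term, we expand $\mathbb{E}[(v^S_{k+1})^\gamma]$ exactly via the multinomial formula \eqref{moment_cal}, with $p_b,p_\sigma$ in place of the projected drift and diffusion. Since $v^{R,S}_k$ is independent of $z_k$, the odd moments of $z_k$ vanish. The terms of order $h^0$ and $h^1$ coincide with the three groups in \eqref{eqfirst_order}, except that $\mathcal{E}$ replaces $\mathbb{E}$. We therefore split
\[
m^S_{k+1,\gamma}-m^{\m}_{k+1,\gamma}=A_\gamma+B_\gamma .
\]
Here $A_\gamma$ collects the differences $\mathbb{E}-\mathcal{E}$ in the $O(h)$ terms, and $B_\gamma$ is the Taylor remainder of order $h^2$ and higher.

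For $A_\gamma$, we write $v^{\gamma-e_i}p_b^{(i)}$ and $v^{\gamma-e_i-e_l}\Sigma^{(i,l)}$ as linear combinations, with coefficients $b_\alpha,\sigma_\alpha$, of monomials of degree at most $|\gamma|+S\le 2L+2S$.
\begin{itemize}
\item If $|\gamma|+S\le 2L+S$, the first part of Lemma~\ref{mRPC bound} gives $\mathcal{E}=\mathbb{E}$ exactly, so the contribution is zero.
\item If $|\gamma|>2L$, the same lemma (second part) together with Lemma~\ref{scaled_moment_bound} and the corollary after it bounds both $|\mathcal{E}[\cdot]|$ and $|\mathbb{E}[\cdot]|$ by $C_\delta\sqrt{(|\gamma|+|\alpha|)!}$.
\end{itemize}
The combinatorial factors $\gamma_i$ and $\gamma_i\gamma_l$ are at most $(2L+2S)^{|\alpha|}$-type weights; together with the coefficient sums they yield at most $h\,M_{R,b,\sigma}C_\delta(\cdots)$. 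Assumption~\ref{asm:drift diffusion terms}(iii) then bounds this by $hC_\delta C_{b,\sigma}R^\eta$. Dividing by $|\gamma|!$ and summing over the at most $C_S$ (times combinatorial) indices with $2L<|\gamma|\le 2L+S$, we use $\sqrt{(|\gamma|+S)!}/|\gamma|!\lesssim 1/\sqrt{(2L+1)!}$ up to absorbed factors. The factor $2$ accounts for the $\mathcal{E}$ and $\mathbb{E}$ contributions together. This produces the first term of \eqref{eq:local_error1}.

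For $B_\gamma$, every remaining term carries $h^{|\nu|+\frac12\sum|\eta^{(j)}|}$ with exponent at least $2$. We bound it using the exponential integrability of $v_k^{R,S}$ (Lemma~\ref{determinate lemma2}), the Gaussian moments of $z_k$, and the fact that $p_b,p_\sigma$ grow at most polynomially on $I_R$. The bound is $Ch^2\sqrt{(|\gamma|+\cdot)!}$, and the weights $1/|\gamma|!$ make the sum over $|\gamma|\le J$ converge, giving $Ch^2$.

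The main obstacle is the bookkeeping in the $A_\gamma$ estimate. One must check that the factorial growth from Lemma~\ref{scaled_moment_bound}, at degrees up to $2L+2S$, is beaten by the $1/|\gamma|!$ weight, leaving exactly the factor $1/\sqrt{(2L+1)!}$. One must also check that the polynomial-coefficient sums are precisely what $M_{R,b,\sigma}$ in \eqref{Mb} controls. A similar concern arises for $B_\gamma$: the constant must stay uniform in $R$. I would first try to make it so by noting that $h$-powers times $R$-polynomial growth remain $O(h^2)$ once $R\sim(\log h^{-1})^{1/2}$. If not, I would absorb the growth into $C$ by shrinking $h^*$.
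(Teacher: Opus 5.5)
Your decomposition is the paper's. The triangle inequality through $\{m^{S}_{k+1,\gamma}\}$ together with Lemma~\ref{moment turncation error} gives the $Ce^{-\delta R^2}$ term, a step the paper leaves implicit. Your treatment of the $O(h)$ mismatch $A_\gamma$ via Lemma~\ref{mRPC bound}, Lemma~\ref{scaled_moment_bound} and $M_{R,b,\sigma}\le C_{b,\sigma}R^\eta$ is also the paper's argument.

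The genuine gap is in the remainder $B_\gamma$, where you already sensed trouble. Knowing only that $p_b,p_\sigma$ grow polynomially on $I_R$, or using the coefficient sums $M_{R,b},M_{R,\sigma}$, cannot give a constant independent of $R$. For instance, $\sup_{I_R}|p_b|\le M_{R,b}\lesssim R^\eta$ and $\sup_{I_R}|p_\sigma|^2\le M_{R,\sigma}\lesssim R^\eta$ make the $1/|\gamma|!$-weighted remainder of size about $h^2\,\mathbb{E}\exp\bigl(|v_k^{R,S}|+CR^\eta+CR^{\eta/2}|z_k|\bigr)\approx h^2e^{CR^\eta}$. Neither fallback repairs this:
\begin{itemize}
\item In the lemma $R$ is a free parameter. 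Even with $R\sim(\log h^{-1})^{1/2}$, the ratio $h^2\,\mathrm{poly}(R)/h^2$ tends to infinity, so the bound is $h^{2-o(1)}$, not $O(h^2)$.
\item Since $R(h)\to\infty$ as $h\to 0$, shrinking $h^*$ enlarges the offending factor rather than absorbing it into a $C=C(K,T)$.
\end{itemize}
Such a bound would still be harmless after iteration in Theorem~\ref{thm: moment error}, but it does not prove the lemma as stated.

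The missing ingredient is that on $I_R$ the approximating polynomials inherit the linear growth and boundedness of $b_h,\sigma_h$, with constants independent of $R$ and $S$. By Assumption~\ref{asm:drift diffusion terms}(i)--(ii) and $R^rS^{-r}\le 1$, you have $|p_b(x)|\le K(|x|+1)+C$ and $|p_\sigma(x)|\le K+C$ for $x\in I_R$. Moreover $v_k^{R,S}$ lies in $I_R$ because it is an output of $\chi_R$. Growth is thus controlled by the random variable $|v_k^{R,S}|$, whose exponential moments are uniformly bounded, rather than by the deterministic $\sup_{I_R}$. Every term of the multinomial expansion with $h$-exponent at least $2$ carries a factor at most $h^2$ for $h\le 1$. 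Hence the whole remainder is bounded by
\[
h^{2}\,\mathbb{E}\bigl(|v_k^{R,S}|+|p_b(v_k^{R,S})|+|p_\sigma(v_k^{R,S})|\,|z_k|\bigr)^{|\gamma|}
\le h^{2}(3C)^{|\gamma|}\max\bigl\{\mathbb{E}|v_k^{R,S}|^{|\gamma|},\,1,\,\mathbb{E}|z_k|^{|\gamma|}\bigr\}.
\]
Lemma~\ref{scaled_moment_bound} and $\mathbb{E}|z_k|^{|\gamma|}\le C_z^{|\gamma|}\sqrt{|\gamma|!}$ then make the $1/|\gamma|!$-weighted sum over $\gamma$ converge to a constant independent of $R$, $S$, $L$ and $h$.

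A minor point in $A_\gamma$: to obtain exactly $1/\sqrt{(2L+1)!}$, absorb $\gamma_i$ and $\gamma_i(\gamma_l-\delta_{il})$ into the factorial. Then use that only $|\gamma|\ge 2L+2$ (drift) and $|\gamma|\ge 2L+3$ (diffusion) contribute. Your cutoff $|\gamma|>2L$, taken literally, only yields $1/\sqrt{(2L)!}$ for the drift.
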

    
    \begin{proof}
        For clarity, we present the proof in the one-dimensional case; the extension to higher dimensions follows by analogous arguments using standard multi-index notation. We first expand $(v_{k+1}^{S})^\gamma = (v_k^{R,S} + h p_b(v_k^{R,S}) + \sqrt{h} p_\sigma(v_k^{R,S}) z_k)^\gamma$ via the binomial theorem up to order $\mathcal{O}(h)$. Taking expectation and summing over $\gamma \le 2L+S$ with weights $\gamma!$ gives
        \begin{equation}\label{delta1 eq1}
            \begin{aligned}
                \sum_{\gamma\leq 2L+S}\dfrac{\E[(v_{k+1}^{S})^\gamma]}{\gamma !} &= \sum_{\gamma\leq 2L+S}\dfrac{\E[(v_{k}^{R,S})^\gamma]}{\gamma !} + h \sum_{\substack{\gamma\leq 2L+S\\ \alpha\leq S}} b_\alpha \frac{\E[(v_k^{R,S})^{\gamma+\alpha-1}]}{(\gamma-1)!} +\\
                &\quad+ \dfrac{h}{2}\sum_{\substack{\gamma\leq 2L+S\\ \alpha\leq S}} \sigma_\alpha   \dfrac{\E[(v_k^{R,S})^{\gamma+\alpha-2}]}{(\gamma-2 )!}
                + \mathcal{O}(h^{2}).
            \end{aligned}
        \end{equation}

        The intermediate mRPC moments $\sum_{\gamma\leq 2L+S}({m_{k+1,\gamma}^{\m}}/{\gamma !})$ satisfy the same expansion as \eqref{delta1 eq1} with $\E$ replaced by $\Ecal^{R,S}$. Subtracting these two expressions and using Lemma~\ref{mRPC bound}, which ensures that $\mathcal{E}$ agrees with the exact expectation for moments of order no larger than $2L+S$, we only need to consider terms for which the moment order exceeds $2L+S$, i.e., those with $\gamma+\alpha-1 > 2L+S$ or $\gamma+\alpha-2 > 2L+S$. In this case, by Lemma~\ref{mRPC bound}, the approximate moments are bounded by the exact ones, and by Lemma~\ref{scaled_moment_bound}, we obtain
        \begin{equation*}
            \begin{aligned}
                &\left|\sum_{\gamma\leq 2L+S}\dfrac{\E[(v_{k+1}^{S})^\gamma] - m_{k+1,\gamma}^{\m}}{\gamma!} \right| \\\leq  & h\;\sum_{\mathclap{\substack{\gamma\leq 2L+S,\alpha\leq S\\ \gamma + \alpha - 1 > 2L+S }}} |b_\alpha| \dfrac{|\E-\Ecal^{R,S}|[(v_k^{R,S})^{\gamma+\alpha-1}]}{(\gamma -1)!} + \dfrac{h}{2}\quad\sum_{\mathclap{\substack{\gamma\leq 2L+S,\alpha\leq S\\ \gamma + \alpha - 1 > 2L+S }} }|\sigma_\alpha|   \dfrac{|\E-\Ecal^{R,S}|[(v_k^{R,S})^{\gamma+\alpha-2}]}{(\gamma-2)!}
                + \mathcal{O}(h^{2})\\
                \leq & 2C_{\delta}h \sum_{\mathclap{\substack{\alpha\leq S\\ 2L+2\leq \gamma\leq 2L+S }}} |b_\alpha| \dfrac{\sqrt{(\gamma + \alpha - 1)!}}{(\gamma -1)!} + C_{\delta} h\sum_{\mathclap{\substack{\alpha\leq S\\ 2L+3\leq \gamma\leq 2L+S }}} |\sigma_\alpha|   \dfrac{\sqrt{(\gamma + \alpha - 2)!}}{(\gamma-2)!} + \mathcal{O}(h^{2})\\
                \leq & \dfrac{2C_{\delta} SM_{R,b}h}{\sqrt{\left[(2L+1)!\right]}} + \dfrac{C_{\delta} SM_{R,\sigma}h}{\sqrt{\left[(2L+1)!\right]}}  +\mathcal{O}(h^{2}).
            \end{aligned}
        \end{equation*}
        The last inequality follows from the bounds $\sqrt{\frac{(\gamma+\alpha-1)!}{(\gamma-1)!}}, \sqrt{\frac{(\gamma+\alpha-2)!}{(\gamma-2)!}} \le (2L+2S)^\alpha$ for $\gamma\le 2L+S$, $\alpha\le S$, and the definitions of $M_{R,b}$ and $M_{R,\sigma}$ in \eqref{Mb}. The factor $S$ in the last bound arises from summing over the indices $\gamma$ satisfying $2L+2 \le \gamma \le 2L+S$ (or $2L+3 \le \gamma \le 2L+S$), as there are at most $S$ such values in one dimension. In higher dimensions, this counting factor becomes the number of multi-indices in the corresponding range, which is bounded by $\binom{S+d}{S}$.

        We now consider high-order terms. If we neglect terms of order higher than $\mathcal{O}(h)$ in the one-step mRPC evolution, then the higher-order terms arise only from the expectation in the expansion of $(v_k^{R,S} + h p_b(v_k^{R,S}) + \sqrt{h} p_\sigma(v_k^{R,S}) z_k)^\gamma$. The absolute values of these terms are bounded by:
        \begin{equation*}
            h^2\E\left(|v^{R,S}_k| + |p_b^{R,S}(v_k^{R,S})| + |p^{R,S}_\sigma(v_k^{R,S})|z_k\right)^{\gamma}.
        \end{equation*}
        By the Lipschitz condition of the drift term and the uniform boundedness of the diffusion term, there exists a constant $C>0$ depending only on $K$ such that the expectation is further bounded by
        \begin{equation*}
            C^{\gamma}\E\left(|v^{R,S}_k| + 1 +z_k\right)^{\gamma}\leq (3C)^{\gamma}\max\big\{  \mathbb{E}|v_k^{R,S}|^{\gamma},1,\mathbb{E}(z_k)^\gamma  \big\}.
        \end{equation*}
        Therefore, the higher-order term in $h$ is bounded by

        \begin{equation}\label{high order bound}
    \begin{aligned}
        &h^2\sum_{\gamma \leq 2L+S}\frac{(3C)^{\gamma}\max\{ \mathbb{E}|v_k^{R,S}|^{\gamma},1,\mathbb{E}|z_k|^\gamma \}}{\gamma !} \\
        &\leq h^2 (\sum_{\gamma \leq 2L+S} \frac{(3C)^\gamma}{\gamma!} \mathbb{E}|v_k^{R,S}|^{\gamma} +  \sum_{\gamma \leq 2L+S} \frac{(3C)^\gamma}{\gamma!} + \sum_{\gamma \leq 2L+S} \frac{(3C)^\gamma}{\gamma!} \mathbb{E}|z_k|^\gamma).
    \end{aligned}
\end{equation} 
        The summation in \eqref{high order bound} is uniformly bounded by a constant depending on $K$ and $T$. Specifically, the first sum converges by Lemma~\ref{scaled_moment_bound}, since $\mathbb{E}|v_k^{R,S}|^{\gamma} \leq C_{\delta} \delta^{-\gamma/2} \sqrt{\gamma!}$, which yields a convergent series of the form $\sum x^\gamma / \sqrt{\gamma!}$. The second sum is easily bounded by $h^2 e^{3C}$. The third sum converges similarly to the first one, since the Gaussian moments satisfy $\mathbb{E}|z_k|^\gamma \leq C_z^\gamma \sqrt{\gamma!}$. Combining all these estimates and Assumption $\ref{asm:drift diffusion terms}$ yields \eqref{eq:local_error1}.
    \end{proof}

    The following lemma estimates the RPC error $\Delta_k^{(2)}$.
    
    \begin{Lemma}\label{lem:local_error2}
    For all $0 \le k \le n-1$, let $\Delta_{k}^{(2)}$ be defined in
    \eqref{defined errors}. Under Assumptions ~\ref{asm:drift diffusion terms}, \ref{asm:stability}, and \ref{asm:initial condition}, there exists $h^{*} >0$, such that for all $h\in(0,h^*)$, we have
    \begin{equation}\label{eq:local_error2_new}
        \Delta_{k+1}^{(2)}
        \le \left( 1 + C_{b,\sigma}R^\eta h \right)\Delta_{k}
        + \frac{3C_{S}C_{\delta}C_{b,\sigma}R^\eta h}{\sqrt{(2L+1)!}},
    \end{equation}
    where $C_{S} = \binom{S+d}{S}$.
\end{Lemma}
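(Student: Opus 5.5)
We compare two one-step mRPC updates that share the same update rule \eqref{eqfirst_order}: one starts from $\{m^{\RPC}_{k,\gamma}\}_{|\gamma|\le J}$ and uses $\mathcal{E}$ built from $\{T^{\RPC}_{k,\alpha}\},\{\Gamma^{\RPC}_{k,\alpha\beta\gamma}\}$, the other starts from $\{m^{R,S}_{k,\gamma}\}_{|\gamma|\le J}$ and uses $\mathcal{E}$ built from $\{T^{R,S}_{k,\alpha}\},\{\Gamma^{R,S}_{k,\alpha\beta\gamma}\}$. As in Lemma~\ref{lem:local_error1}, we work in one dimension and replace counting factors by $C_S=\binom{S+d}{S}$ at the end. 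Subtracting the two instances of \eqref{eqfirst_order}, dividing by $\gamma!$ and summing over $|\gamma|\le J$, the zeroth-order terms give exactly $\Delta_k$. The remaining terms are $h$ times differences of the form
\[
\sum_{\alpha\le S} b_\alpha\,\frac{\mathcal{E}^{\RPC}[v^{\gamma+\alpha-1}]-\mathcal{E}^{R,S}[v^{\gamma+\alpha-1}]}{(\gamma-1)!},
\]
together with the analogous $\sigma_\alpha$-terms with $\gamma+\alpha-2$ and $(\gamma-2)!$ and the factor $\tfrac12$. This uses the expansion of $p_b$ and $p_\sigma^2$ in monomials \eqref{eq:approximate polynomials}, so the drift and diffusion contributions become linear combinations of approximate monomial expectations $\mathcal{E}[v^{\mu}]$ with $|\mu|\le 2L+2S$.

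\textbf{Splitting by order.} We split the differences according to whether the order $|\mu|$ exceeds $J=2L+S$.

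\emph{Low orders, $|\mu|\le J$.} The key claim is that $\mathcal{E}$ computed from a moment set reproduces exactly the moments of that set. This is the algebraic content of Lemma~\ref{mRPC bound}: Gram--Schmidt followed by the triple-product rule on $\mathcal{J}_{L,S}$ returns the input moments whenever the Hankel matrix is positive definite. Assumption~\ref{asm:stability}(i) supplies positive definiteness for the mRPC moments; the reference moments have it because they are moments of a genuine measure. The argument of Lemma~\ref{mRPC bound} uses only orthonormality and the triple-product identities, not the origin of the moments, so it transfers to the mRPC set. Hence $\mathcal{E}^{\RPC}[v^\mu]-\mathcal{E}^{R,S}[v^\mu]=m^{\RPC}_{k,\mu}-m^{R,S}_{k,\mu}$ for these terms.

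We then bound
\[
\frac{|b_\alpha|}{(\gamma-1)!}\,|m^{\RPC}_{k,\mu}-m^{R,S}_{k,\mu}|\le |b_\alpha|\frac{\mu!}{(\gamma-1)!}\cdot\frac{|m^{\RPC}_{k,\mu}-m^{R,S}_{k,\mu}|}{\mu!},
\qquad \mu=\gamma+\alpha-1 .
\]
The ratio satisfies $\mu!/(\gamma-1)!\le (2L+2S)^{\alpha}$. Summing over $\alpha$ and using \eqref{Mb} gives $M_{R,b}$. Reindexing by $\mu$, each fixed $\mu$ appears once for every $\alpha$ with the corresponding $\gamma$, so the total is at most $M_{R,b}\,\Delta_k$; the same argument gives $\tfrac12 M_{R,\sigma}\Delta_k$. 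By Assumption~\ref{asm:drift diffusion terms}(iii) this contributes at most $C_{b,\sigma}R^\eta h\,\Delta_k$.

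\emph{High orders, $J<|\mu|\le 2L+2S$.} We do not difference these terms; we bound each approximate expectation separately.
\begin{itemize}
\item For the reference side, Lemma~\ref{mRPC bound} together with Lemma~\ref{scaled_moment_bound} (the corollary after Lemma~\ref{mRPC bound}) gives $|\mathcal{E}^{R,S}[v^\mu]|\le C_\delta\sqrt{|\mu|!}$.
\item For the mRPC side, we reproduce the Cauchy--Schwarz step of Lemma~\ref{mRPC bound} with mRPC coefficients. This gives $|\mathcal{E}^{\RPC}[v^\mu]|\le (m^{\RPC}_{2\mu_1})^{1/2}(m^{\RPC}_{2\mu_2})^{1/2}$ with $|2\mu_i|\le 2L+2S$. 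Assumption~\ref{asm:stability}(ii) bounds this by $C_\delta\sqrt{|\mu|!}$, up to the same absorption of $\delta^{-|\mu|}$ factors as in that corollary.
\end{itemize}
The computation at the end of Lemma~\ref{lem:local_error1} then applies verbatim. It uses $\sqrt{\mu!}/(\gamma-1)!\le (2L+2S)^\alpha/\sqrt{(2L+1)!}$ and the count of at most $C_S$ values of $\gamma$. The two sides give $2C_\delta C_S M_{R,b}h/\sqrt{(2L+1)!}$ plus the $\sigma$ analogue, which is at most $3C_SC_\delta C_{b,\sigma}R^\eta h/\sqrt{(2L+1)!}$. Adding this to the low-order bound yields \eqref{eq:local_error2_new}.

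\textbf{Main obstacle.} The delicate step is the exact-reproduction claim for $\mathcal{E}^{\RPC}$ at orders $|\mu|\le J$. I expect it to follow from the fact that $\Gamma^{\RPC}$ and $T^{\RPC}$ are computed from the very moments $m^{\RPC}_{k,\cdot}$, so the identities are purely algebraic. Positive definiteness of the Hankel matrix, from Assumption~\ref{asm:stability}(i), is essential for the construction to be well defined. A secondary point is the combinatorial reindexing of the low-order terms, which must produce the constant $C_{b,\sigma}R^\eta$ with no additional factor of $S$.
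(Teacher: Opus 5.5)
Your structure is the paper's. The zeroth-order terms of \eqref{eqfirst_order} give exactly $\Delta_k$. The terms of order at most $J$ are differenced and, via $\mu!/(\gamma-1)!\le(2L+2S)^{\alpha}$ and reindexing, give $h\bigl(M_{R,b}+\tfrac12 M_{R,\sigma}\bigr)\Delta_k$. The terms of order above $J$ are bounded one side at a time, which produces the $1/\sqrt{(2L+1)!}$ term. Splitting by $\mu=\gamma+\alpha-1$ rather than by $\gamma\le 2L+1$ is harmless, since $|\mu|>J$ forces $\gamma\ge 2L+2$. Your argument that $\mathcal{E}^{\RPC}[v^{\mu}]=m^{\RPC}_{k,\mu}$ for $|\mu|\le J$ is correct and purely algebraic: it needs only Assumption~\ref{asm:stability}(i) and the fact that $\Gamma^{\RPC}$ is computed from the same moments. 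The paper uses this fact without stating it.

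The step that does not go through as written is your bound on $|\mathcal{E}^{\RPC}[v^{\mu}]|$ for $J<|\mu|\le 2L+2S$. After Cauchy--Schwarz, the proof of Lemma~\ref{mRPC bound} uses Bessel's inequality $\sum_{|\eta|\le L}(q^{(1)}_\eta)^2\le \E[(v^{R,S}_k)^{2\mu_1}]$. That holds there because $\mathcal{E}^{R,S}$ comes from a genuine probability measure.

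Now let $\mathcal{T}$ be the linear functional defined by the mRPC moments. For $\mathcal{T}$, the same inequality is equivalent to $\mathcal{T}\bigl((x^{\mu_1}-\sum_{|\eta|\le L}q^{(1)}_\eta T^{\RPC}_{k,\eta})^2\bigr)\ge 0$, where $x^{\mu_1}$ has degree up to $L+S$. That is a positivity statement about the Hankel form built from moments of order up to $2L+2S$. Assumption~\ref{asm:stability}(i) only gives positive definiteness of the degree-$L$ Hankel matrix, which involves moments of order at most $2L$. So you are using an extra, unverified hypothesis. Without it, the only handle on $q^{(1)}_\eta=\sum_\beta t_{\eta\beta}\, m^{\RPC}_{k,\mu_1+\beta}$ is through the monomial coefficients $t_{\eta\beta}$ of $T^{\RPC}_{k,\eta}$, and (i) does not bound these.

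The paper avoids this entirely. It applies Assumption~\ref{asm:stability}(ii) directly to the mRPC-side quantities of order up to $2L+2S$, which is presumably why (ii) is imposed up to order $2L+2S$ rather than $J$. That gives $|\mathcal{E}^{\RPC}[v^{\mu}]|\le C_\delta\sqrt{|\mu|!}$ immediately. Replace your Cauchy--Schwarz detour with that direct citation and the argument closes with the stated constant.
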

    
    \begin{proof} 
         We again present the proof in the one-dimensional case. Using a similar expansion as in the proof of Lemma \ref{lem:local_error1}, we have
        \begin{equation*}
            \begin{aligned}
                &\Delta_{k+1}^{(2)} =   \sum_{\gamma\leq 2L+S}\dfrac{|m_{k+1,\gamma}^{\RPC}- m_{k+1,\gamma}^{\m}|}{\gamma!} \leq \sum_{\gamma\leq 2L+S}\dfrac{\left|\Ecal[(v_{k}^{R,S})^\gamma] - \Ecal[(v_{k}^{\RPC})^\gamma]\right|}{\gamma!} \\&+ h \sum_{\mathclap{\substack{\gamma\leq 2L+S\\ \alpha\leq S}}}|b_\alpha| \dfrac{\left|\Ecal[(v_k^{\RPC})^{\gamma+\alpha-1}] - \Ecal[(v_k^{R,S})^{\gamma+\alpha-1}]\right|}{(\gamma-1)!} + \dfrac{h}{2}\sum_{\substack{\gamma\leq 2L+S\\ \alpha\leq S}} |\sigma_\alpha|   \dfrac{\left|\Ecal[(v_k^{\RPC})^{\gamma+\alpha-2}] - \Ecal[(v_k^{R,S})^{\gamma+\alpha-2}]\right|}{(\gamma-2)!}.
            \end{aligned}
        \end{equation*}
        We first estimate the term involving $b_\alpha$. Splitting the inner sum over $\gamma$ into two parts according to whether $\gamma\le 2L+1$ or $\gamma\ge 2L+2$, we obtain 
        \begin{equation*}
            \begin{aligned}
                &\sum_{\substack{\gamma\leq 2L+S\\ \alpha\leq S}} |b_\alpha| \dfrac{\left|\Ecal[(v_k^{\RPC})^{\gamma+\alpha-1}] - \Ecal[(v_k^{R,S})^{\gamma+\alpha-1}]\right|}{(\gamma-1)!}\\
           \leq & \sum_{\substack{\alpha\leq S}} b_\alpha\left( \sum_{\gamma\leq 2L+1} + \qquad\sum_{\mathclap{2L+2\leq\gamma\leq 2L+S}}\qquad\right) \dfrac{(\gamma+\alpha-1)!}{(\gamma - 1)!}\dfrac{\left|\Ecal[(v_k^{\RPC})^{\gamma+\alpha-1}] - \Ecal[(v_k^{R,S})^{\gamma+\alpha-1}]\right|}{(\gamma+\alpha-1)!}.
            \end{aligned}
        \end{equation*}
        For the low-order part ($\gamma\le 2L+1$), we use the bound $(\gamma+\alpha-1)!/(\gamma-1)! \le (2L+S)^\alpha$ (since $\gamma+\alpha-1 \le 2L+S$). This yields  
        \begin{equation*}
            \begin{aligned}
                \sum_{\alpha\le S} |b_\alpha| (2L+S)^\alpha \sum_{\gamma\le 2L+1} \frac{|\mathcal{E}[(v_k^{\RPC})^{\gamma+\alpha-1}] - \mathcal{E}[(v_k^{R,S})^{\gamma+\alpha-1}]|}{(\gamma+\alpha-1)!}\leq M_{R,b}\Delta_{k}.
            \end{aligned}
        \end{equation*}
        For the high-order part, by Lemma~\ref{mRPC bound}, Lemma~\ref{scaled_moment_bound}, and Assumption~\ref{asm:stability}, it is bounded by
\begin{equation*}
    \begin{aligned}
        &\sum_{\substack{2L+2\le \gamma\le 2L+S\\ \alpha\le S}}
        |b_\alpha|(2L+2S)^\alpha
        \frac{2\max\left\{
        |\Ecal^{R,S}[(v_k^{R,S})^{\gamma+\alpha-1}]|,
        |\Ecal[(v_k^{\RPC})^{\gamma+\alpha-1}]|
        \right\}}{(\gamma+\alpha-1)!}\\
        &\qquad\le
        2C_{\delta}\sum_{\substack{2L+2\le \gamma\le 2L+S\\ \alpha\le S}}
        |b_\alpha|(2L+2S)^\alpha \frac{1}{\sqrt{(\gamma+\alpha-1)!}}
        \le
        \frac{2SC_{\delta}M_{R,b}}{\sqrt{(2L+1)!}}.
    \end{aligned}
\end{equation*}
        The factor $S$ again arises from summing over the indices. A completely analogous estimate for the $\sigma^{(2)}$ term yields
        \begin{equation*}\label{eq:sigmaterm}
\begin{aligned}
    \sum_{\substack{\gamma\le 2L+S\\ \alpha\le S}}
    |\sigma_\alpha|
    \frac{\left|\Ecal[(v_k^{\RPC})^{\gamma+\alpha-2}]
    - \Ecal^{R,S}[(v_k^{R,S})^{\gamma+\alpha-2}]\right|}{(\gamma-2)!} 
    \le
    M_{R,\sigma}\Delta_k
    +\frac{2SC_{\delta}M_{R,\sigma}}{\sqrt{(2L+1)!}}.
\end{aligned}
\end{equation*}
        Combining the above estimates and Assumption $\ref{asm:drift diffusion terms}$ yields the desired bound. 
    \end{proof}

    Now, we give the following error estimate of $\Delta_k$.
    
    \begin{proof}[Proof of Theorem $\ref{thm: moment error}$]
    Throughout this proof, $C,c>0$ denote generic constants that may vary from line to line and depend only on $K$, $T$, $d$, $C_{b,\sigma}$, $\lambda_1$, $\lambda_2$, and $\delta$. Combining Lemmas~\ref{lem:local_error1} and~\ref{lem:local_error2}, we obtain
\begin{equation}\label{eq:delta_recursion}
    \Delta_{k+1}
    \leq \Delta_{k+1}^{(1)}+\Delta_{k+1}^{(2)}
    \leq \bigl(1+C_{b,\sigma}R^\eta h\bigr)\,\Delta_k
    + \frac{C\,R^{\eta(d+1)}}{\sqrt{(2L+1)!}}
    + Ce^{-\delta R^2}
    + Ch^2,
\end{equation}
where we have used $C_S=\binom{S+d}{d}\le C R^{\eta d}$ (since $S=[\lambda_2 R^\eta]$)
to absorb $C_\delta C_S$ into the constant. Iterating \eqref{eq:delta_recursion} over $k=0,\dots,n-1$ and using $(1+C_{b,\sigma}R^\eta h)^n\le e^{C_{b,\sigma}R^\eta T}\le e^{cR^\eta}$, we obtain 
\begin{equation}\label{eq:delta_iterated}
    \Delta_k
    \leq \frac{C\,R^{\eta (d+1)}\,e^{cR^\eta}}{\sqrt{(2L+1)!}}
    + C(R^\eta h)^{-1}e^{cR^\eta-\delta R^2}
    + CR^{-\eta}e^{cR^\eta}\,h.
\end{equation}
It remains to estimate the first term. Since $L=[\lambda_1 R^\eta]$, Stirling's formula gives, for sufficiently large $R$, we have $\sqrt{(2L+1)!}
    \ge C\exp (cR^\eta \log R - cR^\eta \bigr)$. 
Hence
\[
    \frac{R^{\eta(d+1)}\,e^{cR^\eta}}{\sqrt{(2L+1)!}}
    \le CR^{\eta d}\,\exp (cR^\eta - cR^\eta \log R\bigr)
    = C\,R^{\eta(d+1)}\,e^{cR^{\eta}-c R^\eta\log R},
\]
which, together with \eqref{eq:delta_iterated}, gives
        \begin{equation*}
        \begin{aligned}
            \sup_{0\le k\le n}d &\left( \{ m_{k,\gamma}^{R,S} \}_{|\gamma|\le J}, \{ m_{k,\gamma}^{\RPC} \}_{|\gamma|\le J} \right)
            \le  C R^{\eta d} e^{\,cR^\eta-cR^\eta\log R}
            + C(R^\eta h)^{-1} e^{\,cR^\eta-\delta R^2}
            + C R^{-\eta}e^{cR^{\eta}}h.
        \end{aligned} 
    \end{equation*}
    Take $R  =  \delta^{-1/2}(-2\log h)^{1/2}$. Renaming all constants and retaining only the leading terms yields the final result. 
\end{proof}
    
    By Theorem $\ref{thm: moment error}$ and the definition of $d(\cdot,\cdot)$,  for any $\ell \leq 2L+S$, the unweighted moment error up to order $\ell$ satisfies
    \begin{equation}\label{low order moment}
        \sum_{|\gamma|\leq \ell} |m_{k,\gamma}^{R,S} - m_{k,\gamma}^{\mathrm{RPC}}| \leq \ell! \Delta_k ,
    \end{equation}
    which is small for a fixed order $\ell$. 
    
    We next prove the following lemma ensuring the existence of a probability measure $\mu_{k,\ell}^{\RPC}$ whose first $\ell$ moments coincide with those computed by the RPC algorithm.
    \begin{Lemma}\label{lemma:distribution matching}
        Let $\ell\in \mathbb{N}$ be a fixed moment order and set $R = \delta^{-1/2}(-2\log h)^{1/2}$. Under Assumptions \ref{asm:initial condition}--\ref{asm:positive measure}, there exists a constant $h_{\tm} > 0$ such that for all $h \in (0, h_{\tm})$, there exists a probability measure $\mu_{k,\ell}^{\RPC}$ supported on $I_R$ satisfying
        \begin{equation}
            m^{\RPC}_{k,\gamma} = \int_{I_R} x^{\gamma} \, d\mu_{k,\ell}^{\RPC}, \qquad \text{for all } |\gamma| \le \ell.
        \end{equation}
    \end{Lemma}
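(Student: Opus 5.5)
We will apply Proposition~\ref{truncated determinant} with $\hat R=R$ and $L=\ell$ to the functional $\mathcal{T}_\ell(x^\gamma)=m^{\RPC}_{k,\gamma}$, $|\gamma|\le\ell$. The task is then to show that $\mathcal{T}_\ell(p)\ge 0$ for every polynomial $p$ of degree at most $\ell$ that is nonnegative on $I_R$. Since $m^{\RPC}_{k,0}=1$ is preserved by the update \eqref{eqfirst_order}, the resulting measure is a probability measure. By homogeneity we may normalize $p=\sum_{|\alpha|\le\ell}c_\alpha x^\alpha$ so that $\sum c_\alpha^2=1$.

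The idea is to compare $\mathcal{T}_\ell(p)$ with $\int_{I_R} p\,d\mu_k$, which Assumption~\ref{asm:positive measure} bounds below by $\lambda(\ell)>0$ uniformly in $k$, once $R>R^*$. Since $R=\delta^{-1/2}(-2\log h)^{1/2}$, this holds for $h$ small. We split
\[
\mathcal{T}_\ell(p)=\int_{I_R}p\,d\mu_k+\Bigl(\sum_\alpha c_\alpha m_{k,\alpha}-\int_{I_R}p\,d\mu_k\Bigr)+\sum_\alpha c_\alpha\bigl(m^{R,S}_{k,\alpha}-m_{k,\alpha}\bigr)+\sum_\alpha c_\alpha\bigl(m^{\RPC}_{k,\alpha}-m^{R,S}_{k,\alpha}\bigr),
\]
where $m_{k,\alpha}$ are the moments of $v_k$. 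By Cauchy--Schwarz in the coefficients, each error term is bounded by the $\ell^2$-norm of the corresponding moment differences over $|\alpha|\le\ell$. It therefore suffices to show that each of these three differences tends to zero uniformly in $k$ as $h\to 0$, with $\ell$ fixed.

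We bound the three differences in turn.

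\begin{itemize}
\item The tail term $\int_{I_R^c}|x|^{|\alpha|}d\mu_k$ is handled as in Lemma~\ref{moment turncation error}. Cauchy--Schwarz together with the exponential integrability of $v_k$ from Lemma~\ref{determinate lemma2} gives a bound $C_\ell e^{-\hat\delta R^2/2}$.

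\item For $|m^{R,S}_{k,\alpha}-m_{k,\alpha}|$ we write $x^\alpha-y^\alpha$ as a telescoping sum of terms linear in $x-y$ times monomials of degree $|\alpha|-1$. Then Cauchy--Schwarz with Lemma~\ref{convergent thm} and the moment bounds of Lemmas~\ref{determinate lemma2} and \ref{scaled_moment_bound} gives a bound $C_\ell\bigl(e^{-\delta R^2}+R^rS^{-r}\bigr)$. Since $S\sim\lambda_2R^\eta$ with $\eta>1$, we have $R^rS^{-r}\lesssim R^{-r(\eta-1)}\to0$.

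\item For the last term we use \eqref{low order moment} and Theorem~\ref{thm: moment error}, which give $\sum_{|\gamma|\le\ell}|m^{R,S}_{k,\gamma}-m^{\RPC}_{k,\gamma}|\le \ell!\,C_N(-\log h)^{-N}$. This requires $\ell\le 2L+S$, which holds for $h$ small since $L\to\infty$.
\end{itemize}

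Summing the three bounds, the total error is at most $C_\ell\,\epsilon(h)$ with $\epsilon(h)\to 0$. We choose $h_{\tm}$ so that $C_\ell\,\epsilon(h)<\lambda(\ell)$ and $R>R^*$ for all $h<h_{\tm}$. Then $\mathcal{T}_\ell(p)>0$, and Proposition~\ref{truncated determinant} produces $\mu^{\RPC}_{k,\ell}$ supported on $I_R$.

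The main obstacle is uniformity. Assumption~\ref{asm:positive measure} gives a lower bound over the family of polynomials nonnegative on $I_R$, and this family changes with $R$, hence with $h$. We must use the fact that $\lambda(\ell)$ is independent of $R>R^*$, so that the comparison above does not degrade as $R$ grows. We must also make sure that none of the constants in the three error bounds grows with $R$: all of them depend only on $\ell$, $K$, $T$ and $\delta_0$, since the monomials have fixed degree and only unrestricted moments of $v_k$ and $v_k^{R,S}$ enter. The polynomial $p$ is evaluated against $\mu_k$ only on $I_R$, where it is nonnegative. The tail of $\mu_k$ outside $I_R$ is absorbed into the first error term.
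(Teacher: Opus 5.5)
Your proposal is correct and follows essentially the same route as the paper. Both reduce, via Proposition~\ref{truncated determinant}, to positivity of $\mathcal{T}_\ell$ on normalized polynomials that are nonnegative on $I_R$, and both then compare with the $R$-uniform lower bound $\lambda(\ell)$ of Assumption~\ref{asm:positive measure} through the chain $\int_{I_R}x^\gamma d\mu_k \to m_{k,\gamma}\to m^{R,S}_{k,\gamma}\to m^{\RPC}_{k,\gamma}$, using a tail estimate, the telescoping/Cauchy--Schwarz argument with Lemma~\ref{convergent thm}, and \eqref{low order moment} with Theorem~\ref{thm: moment error}. The differences are cosmetic: you apply Cauchy--Schwarz in the coefficients rather than $C_\ell\max_\gamma$, and you use an exponential rather than polynomial tail bound. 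You also explicitly check that $m^{\RPC}_{k,0}=1$ is preserved by \eqref{eqfirst_order}, which the paper leaves implicit when it calls the resulting measure a probability measure.
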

    \begin{proof}
        By Lemma~\ref{truncated determinant}, it suffices to prove that the linear functional $\mathcal T_\ell$ on $\mathcal{P}_\ell(I_R)$ defined by
        \[ \mathcal{T}_{\ell}(x^\gamma) = m^{\RPC}_{k,\gamma},\quad \text{for all } |\gamma|\leq \ell, \]
        is positive for every polynomial $p(x) = \sum_{|\gamma|\leq \ell} c_\gamma x^\gamma$ that is nonnegative on $I_R$ and satisfies $\sum_{|\gamma|\leq \ell} |c_\gamma|^2 = 1$. Denote $m^{R}_{k,\gamma} = \int_{I_R} x^\gamma d\mu_k$. By Assumption~\ref{asm:positive measure}, for $R \ge R^*$, there exists $h_0 > 0$ such that for all $h \in (0, h_0)$, there exists a constant $\lambda(\ell) > 0$ independent of $R$ satisfying
        \begin{equation}\label{eq:positive Euler}
        \sum_{|\gamma| \leq \ell} c_{\gamma}m^{R}_{k,\gamma} \geq \lambda(\ell).
        \end{equation}
        By Lemma~\ref{determinate lemma2}, there exist constants $h^*>0$ and $C_\delta>0$ such that for all $h\le h^*$ and all $|\gamma|\le\ell$,
        \[ |m_{k,\gamma} - m^{R}_{k,\gamma}| = \Bigl|\int_{I_R^c}x^\gamma d\mu_{k}\Bigr| \leq \mathbb{E}\bigl[|v_k|^{|\gamma|}\mathbf{1}_{\{|v_k|\geq R\}}\bigr] \leq R^{|\gamma|-2\ell}\mathbb{E}[|v_k|^{2\ell}] \leq C_\delta R^{-\ell}\ell!.\]
        Consider the telescoping difference series:
        \begin{equation}
            \begin{aligned}
            &(v_k^{(1)})^{\gamma_1} (v_k^{(2)})^{\gamma_2} \cdots (v_k^{(d)})^{\gamma_d} - (v_k^{R,S(1)})^{\gamma_1} (v_k^{(2)})^{\gamma_2} \cdots (v_k^{(d)})^{\gamma_d}\\
            &+ (v_k^{R,S(1)})^{\gamma_1} (v_k^{(2)})^{\gamma_2} \cdots (v_k^{(d)})^{\gamma_d} - \cdots - (v_k^{R,S(1)})^{\gamma_1} (v_k^{R,S(2)})^{\gamma_2} \cdots (v_k^{R,S(d)})^{\gamma_d}.
            \end{aligned}
        \end{equation}
        The difference between consecutive terms can be bounded using the Cauchy-Schwarz inequality. For the $j$-th difference, we have:
        \begin{equation}\label{eq:moment error of RS}
            \begin{aligned}
                &\quad|\mathbb{E}(v^{R,S(1)}_{k})^{\gamma_{1}}\cdots (v_{k}^{R,S(j)})^{\gamma_{j}}\cdots (v_{k}^{(d)})^{\gamma^{d}} - \mathbb{E}(v^{R,S(1)}_{k})^{\gamma_{1}}\cdots (v_{k}^{(j)})^{\gamma_{j}}\cdots (v_{k}^{(d)})^{\gamma^{d}}| \\
                &\leq \sum_{s\leq \gamma_j-1} |\mathbb{E}\left[ (v^{R,S(1)}_{k})^{\gamma_{1}}\cdots (v^{R,S(j)}_{k})^{\gamma_{j}-s-1}(v^{(j)}_{k})^{s}\cdots (v^{(d)}_{k})^{\gamma_{d}}\left(v^{R,S(j)}_{k}- v^{(j)}_{k}\right)\right]|\\
                &\leq \sum_{s\leq \gamma_j-1}\left[ \mathbb{E}\left((v^{R(1)}_{k})^{\gamma_{1}}\cdots (v^{R,S(j)}_{k})^{\gamma_{j}-s-1}(v^{(j)}_{k})^{s}\cdots (v^{(d)}_{k})^{\gamma_{d}}\right)^{2} \right]^{\frac{1}{2}}\left[ \mathbb{E}|v^{R,S(j)}_{k}- v^{(j)}_{k}| ^{2} \right]^{\frac{1}{2}}.
            \end{aligned}
        \end{equation} 
        The first expectation on the last line of equation \eqref{eq:moment error of RS} is bounded by
        \begin{equation*}
            \begin{aligned}
                \mathbb{E}(|v_{k}^{R,S}|^{|\gamma|}\cdot|v_{k}|^{|\gamma|})\leq \left[\mathbb{E}|v_{k}^{R,S}|^{2|\gamma|}\mathbb{E}|v_{k}|^{2|\gamma|}\right]^{1/2}
                &\leq C_\delta \ell!.
            \end{aligned}
        \end{equation*}
        For the second expectation, Lemma~\ref{convergent thm} gives $|m_{k,\gamma} - m^{R,S}_{k,\gamma}| \leq CC_{\delta} (\ell+1)!(e^{-\delta  R^2}  +R^rS^{-r} )$.
        Therefore,
        \[ |m^{R}_{k,\gamma} - m^{R,S}_{k,\gamma}|\leq |m^{R}_{k,\gamma} - m_{k,\gamma}| + |m_{k,\gamma} - m^{R,S}_{k,\gamma}|\leq C_\delta  R^{-\ell}\ell! +  CC_{\delta} (\ell+1)!\left(e^{-\delta  R^2}  + R^rS^{-r} \right).\]
        Combining this with \eqref{low order moment} and the relation between $S$ and $R$, we obtain, for $C>0$ depending on $K$, $T$, $\lambda_1$:
        \begin{equation}\label{eq:moment bdd}
            |m^{R}_{k,\gamma} - m^{\RPC}_{k,\gamma}|\leq C_\delta  R^{-\ell}\ell! +  CC_{\delta} (\ell+1)!\left(e^{-\delta  R^2}  + R^{r(1-\eta)} \right) + \ell! \Delta_k .
        \end{equation}
        For fixed $\ell$, we choose $R(h) = \delta^{-1/2}(-2\log h )^{1/2}$ so that $\lim_{h\to 0} R(h) = \infty$. By Theorem~\ref{thm: moment error}, there exists a  constant $N>0$ such that the error $\Delta_k$ satisfies the estimate 
        \[
        \Delta_k \lesssim (-\log h)^{-N},
        \]
        which tends to $0$ as $h\to 0$. Consequently, there exists $\hat h>0$ such that for all $h\in(0,\hat h)$, the right-hand side of \eqref{eq:moment bdd} is smaller than $\frac{\lambda(\ell)}{2C_{\ell}}$, where $C_{\ell} = \binom{\ell+d}{d}$. Then, using \eqref{eq:positive Euler} and \eqref{eq:moment bdd}, we obtain
        \[ \sum_{|\gamma| \leq \ell} c_{\gamma}m^{\RPC}_{k,\gamma}= \sum_{|\gamma| \leq \ell} c_{\gamma}m^{R}_{k,\gamma} + \sum_{|\gamma| \leq \ell} c_{\gamma}\left( m_{k,\gamma}^{\RPC} - m^{R}_{k,\gamma}\right)\geq\lambda(\ell) -C_{\ell}\max_{|\gamma|\leq \ell}|m_{k,\gamma}^{\RPC} - m^{R}_{k,\gamma}|\geq \lambda(\ell)/2, \]
        which is positive. Set $h_{\tm} = \min\{h_0, h^*, \hat h\}$. Then, for all $h \le h_{\tm}$, there exists a probability measure $\mu_{k,\ell}^{\RPC}$ supported on $I_R$ whose moments $\{m^{\RPC}_{k,\gamma}\}_{|\gamma|\leq \ell}$ coincide with those computed by the RPC algorithm.
    \end{proof}

    Let $\mu_k^{R,S}$ denote the distribution of $v_k^{R,S}$ and $\mu_k$ denote the distribution of $v_k$. Then, using the moment error bound \eqref{low order moment}, we establish Theorem \ref{W1distance} on the $W_1$ distance between $\mu_k$ and $\mu_{k,\ell}^{RPC}$.

\begin{proof}[Proof of Theorem \ref{W1distance}]
Throughout this proof, $C,c>0$ denote generic constants that may change from line to line and depend only on $d$, $K$, $T$, $\lambda_1$, $\lambda_2$, $\delta$, $\eta$, and $s$.

Set $R^{'} = \ell^s \leq R$ with $s\ge\tfrac{1}{2}$, where $\ell$ is a fixed moment-matching order. Let $I_{R^{'}} = [-R^*,R^*]^d$ and define $\bar\mu_k^{R,S}(\cdot) = \mu_k^{R,S}(\cdot \mid I_{R^{'}})$ and $\bar\mu_{k,\ell}^{\RPC}(\cdot) = \mu_{k,\ell}^{\RPC}(\cdot \mid I_{R^{'}})$. By the triangle inequality,
\begin{equation}\label{eq:W1triangle}
    W_1(\mu_k^{R,S}, \mu_{k,\ell}^{\RPC}) 
    \le W_1(\mu_k^{R,S}, \bar\mu_k^{R,S}) 
    + W_1(\bar\mu_k^{R,S}, \bar\mu_{k,\ell}^{\RPC}) 
    + W_1(\mu_{k,\ell}^{\RPC}, \bar\mu_{k,\ell}^{\RPC}).
\end{equation}
We bound the first and third terms using the same procedure. We present the argument for $\mu_k^{R,S}$. By the Kantorovich-Rubinstein duality, for any $1$-Lipschitz function $f$ with $f(0)$ \- $=0$,
\begin{equation}
\label{eqdual}
    \begin{aligned}
        \int f\, d\mu_k^{R,S} - \int f\, d\bar\mu_k^{R,S}
        &= \int_{I_{R^*}^c} f\, d\mu_k^{R,S} - \mu_k^{R,S}(I_{R^*}^c)\int_{I_{R^*}} f\, d\bar\mu_k^{R,S}.
    \end{aligned}
\end{equation}

Since $|f(x)|\le |x|$ and $|f(x)|\le R^{'}\sqrt d$ on $I_{R^{'}}$, we obtain
\begin{equation}
\label{eqW_11}
    \begin{aligned}
        W_1(\mu_k^{R,S}, \bar\mu_k^{R,S})
        \le \mathbb{E}\!\left[|v_k^{R,S}|\,\mathds{1}_{I_{R^{'}}^c}(v_k^{R,S})\right] + R^{'}\sqrt{d}\,\mu_k^{R,S}(I_{R^{'}}^c).
    \end{aligned}
\end{equation}
On the set $I_{R^{'}}^c$, since $\ell\ge 2$, we have $|x| \le \frac{|x|^\ell}{(R^{'})^{\ell-1}}$ and $R^{'}
\le
\frac{|x|^\ell}{(R^{'})^{\ell-1}}$. Hence by Lemma \ref{scaled_moment_bound} we get
\begin{equation*}\label{eq:tail_RS}
W_1(\mu_k^{R,S},\bar\mu_k^{R,S})
\le
\frac{2\,\E|v_k^{R,S}|^\ell}{(R^{'})^{\ell-1}}
\le
\frac{C \sqrt{\ell!}}{(R^{'})^{\ell-1}}.
\end{equation*} 
The same bound holds for $W_1(\mu_{k,\ell}^{\RPC}, \bar\mu_{k,\ell}^{\RPC})$ by Assumption~\ref{asm:stability}.

For the conditioning error on moments,  similar to \eqref{eqdual} \eqref{eqW_11}, we have for any $|\gamma| \le \ell$,
\begin{equation*}\label{eq:cond_moment}
    |m_\gamma^{\bar\mu_k^{R,S}} - m_{k,\gamma}^{R,S}| \le \E\!\left[|v_k^{R,S}|^{|\gamma|}\mathds{1}_{I_{R^{'}}^c}(v_k^{R,S})\right]
+
(R^{'})^{|\gamma|}\mu_k^{R,S}(I_{R^{'}}^c)
    \le \frac{CC_\delta\sqrt{\ell!}}{(R^{'})^{\ell-|\gamma|}},
\end{equation*}
and the same holds for $|m_\gamma^{\bar\mu_{k,\ell}^{\RPC}} - m_{k,\gamma}^{\RPC}|$ by Assumption~\ref{asm:stability}. Combining via the triangle inequality,
\begin{equation}\label{eq:moment_cond_tri}
|m_\gamma^{\bar\mu_k^{R,S}}-m_\gamma^{\bar\mu_{k,\ell}^{\RPC}}|
\le
|m_{k,\gamma}^{R,S}-m_{k,\gamma}^{\RPC}|
+
\frac{C \sqrt{\ell!}}{(R^{'})^{\ell-|\gamma|}}.
\end{equation} 
Since $\bar\mu_k^{R,S}$ and $\bar\mu_{k,\ell}^{\RPC}$ are supported on $I_{R^{'}}$, Lemma~\ref{moment matching} yields
\begin{equation*}\label{eq:W1interior}
    W_1(\bar\mu_k^{R,S}, \bar\mu_{k,\ell}^{\RPC}) 
    \le \frac{C_d R^{'}}{\ell} + C_d\, 3^\ell\, R^{'}
    \sqrt{\sum_{|\gamma| \le \ell} \left(\frac{m_\gamma^{\bar\mu_k^{R,S}} - m_\gamma^{\bar\mu_{k,\ell}^{\RPC}}}{(R^{'})^{|\gamma|}}\right)^{\!2}},
\end{equation*}
where $C_d$ depends only on $d$. Substituting~\eqref{eq:moment_cond_tri}, using $(R^{'})^{|\gamma|} \ge R^{'}$ for $|\gamma| \ge 1$, and applying $\sqrt{\sum a_i^2} \le \sum |a_i|$, we obtain
\begin{equation*}
\begin{aligned}
W_1(\bar\mu_k^{R,S},\bar\mu_{k,\ell}^{\RPC})
&\le
\frac{C_dR^{'}}{\ell}
+
C_d\,3^\ell\,R^{'}
\sum_{|\gamma|\le \ell}\frac{|m_{k,\gamma}^{R,S}-m_{k,\gamma}^{\RPC}|}{(R^{'})^{|\gamma|}}
+
C\,3^\ell\,R^{'}\,\sqrt{C_\ell}\,\frac{\sqrt{\ell!}}{(R^{'})^\ell},
\end{aligned}
\end{equation*} 
where $C_\ell = \binom{\ell+d}{\ell}$. Combining all contributions with~\eqref{eq:W1triangle}, the total bound is 
\begin{equation}\label{eq:W1raw}
W_1(\mu_k^{R,S},\mu_{k,\ell}^{\RPC})
\le
\frac{CR^{'}}{\ell}
+
C\,3^\ell\sqrt{C_\ell}\,\frac{\sqrt{\ell!}}{(R^{'})^{\ell-1}}
+
C\,3^\ell\,\ell!\,\Delta_k.
\end{equation}

Note $\binom{\ell+d}{d}\le C \ell^{d}$ for fixed~$d$. By Stirling's formula, $\sqrt{\ell!}\le C \ell^{1/4}(\ell/e)^{\ell/2}$ and $\ell!\le C \sqrt{\ell} (\ell/e)^{\ell}$.
Substituting into~\eqref{eq:W1raw}, the second term satisfies
\[
  3^{\ell}\sqrt{C_\ell} \frac{\sqrt{\ell!}}{\ell^{s(\ell-1)}}
  \;\le\;
  C \ell^{\frac{d}{2}+\frac{1}{4}+s} 
  \exp\big(\ell [c
    +(\tfrac{1}{2}-s)\log\ell
    ]\big),
\]
and the third term satisfies $
  C 3^{\ell} \ell!\;\Delta_k
  \;\le\;
  C \sqrt{\ell}\;
  \exp (\ell\log\ell )\;\Delta_k.$
Taking the leading term of $\Delta_k$, rearranging the constant, and substituting all the above estimates into \eqref{eq:W1raw}, we obtain that 
\begin{equation*}
                \begin{aligned}
                     W_1 \bigl(\mu_k^{R,S}, \mu_{k,\ell}^{\RPC}\bigr)
        &\le 
        C \ell^{ s-1}
        + 
        C \ell^{ \frac{d}{2}+\frac{1}{4}+s} 
        e^{c \ell + \bigl(\frac{1}{2}-s\bigr) \ell\log\ell}+ 
        C \ell^{\frac12} e^{\ell \log \ell} (-\log h)^{-m}.
    \end{aligned}
 \end{equation*}
Since the joint law of $(v_k,v_k^{R,S})$ provides a coupling between $\mu_k$ and
$\mu_k^{R,S}$,
\begin{equation}
\label{W_1vk}
\begin{aligned}
W_1(\mu_k,\mu_k^{R,S})
\le \mathbb E|v_k-v_k^{R,S}| \leq \bigl(\mathbb E|v_k-v_k^{R,S}|^2\bigr)^{1/2}.
\end{aligned}
\end{equation}
In particular, for each fixed $\ell\in\mathbb N$, we can choose $h_\ell=h_\ell(d,K,T,\delta,\ell)\in(0,h_{\tm}]$ such that, for all $0<h\le h_\ell$, by retaining only the leading term, we have $W_1 \bigl(\mu_k^{R,S}, \mu_{k,\ell}^{\RPC}\bigr)\le C \ell^{-s} + C(-\log h)^{-\frac{r(1-\eta)}{2}}$.
\end{proof}

\section{Numerical Results}
\label{sec:numericalres}

In this section, we validate the RPC framework through numerical experiments focusing on several long-time stochastic dynamics, including coupled nonlinear SDEs, systems with random parameters and random initial conditions, intermittency, and non-Gaussian invariant measures. We demonstrate the superior computational efficiency of RPC compared to both MC and gPC methods, while maintaining high accuracy. Moreover, our approach handles systems where traditional model reduction methods (e.g., gPC) often fail to capture the long-time evolution, and where MC fails to converge even with extremely large sample sizes. We also provide comparisons with DgPC \cite{Bal2}, which enables long-time simulations, including convergence to invariant measures of SDEs via appropriate restarts of gPC-based simulations. Using a representative example, we show that the frequent restarts and projections in our RPC method are computationally more advantageous.

All numerical results are benchmarked against exact analytical solutions or, where these are unavailable, high-precision MC simulations. All MC and gPC methods considered in this work utilize parallel computing techniques to significantly accelerate computations and reduce computational time.

\paragraph{Example 4.1.} First, we consider a 2D additive noise test model proposed in \cite{intermtestmodel1}
\begin{equation*}
    \label{intereq1}
    \begin{aligned}
        du(t) &= -(b_u + a_u v(t))u(t)   dt + \sigma_u   dW_u(t), \\
        dv(t) &= -(b_v + a_v u(t)) v(t)   dt + \sigma_v   dW_v(t),
    \end{aligned}
\end{equation*}
where $a_u, a_v \geq 0$, $b_u, b_v > 0$ are damping parameters, drift parameters $ \sigma_u, \sigma_v > 0$ are diffusion constants, and  $W_u, W_v$ are two independent Wiener processes. This model serves as a framework for studying filtering in turbulent signals with intermittent instabilities. Despite its relatively simple structure, the model exhibits rich dynamical behavior, and has therefore been adopted as a test model in \cite{intermitemodel2,Bal2}.

When $a_v=0$, this model becomes conditionally linear, and the mean and variance of $u(t)$ can be computed analytically. Although $v(t)$ follows Gaussian dynamics independent of $u(t)$, the quadratic nonlinearity in the $u$-equation transforms $u(t)$ into highly non-Gaussian, intermittent dynamics. In fact, this two-dimensional coupled model can exhibit rich dynamical behavior for appropriate parameter choices. Interested readers may refer to \cite{intermitemodel2} for a more comprehensive exploration.

We set $a_u = 1$, $a_v = 0$, $b_u = 1.2$, $b_v = 0.5$, $\sigma_u = 0.5$, $\sigma_v = 0.5$ with the independent random initial condition $u_0 = N(1, \sigma_u^2/8b_u)$, $ v_0 = N(0, \sigma_v^2/8b_v), T=12$ and $\Delta t = 0.012$. In this parameter regime, the non-Gaussian dynamics of $u(t)$ are characterized by intermittent bursts of large-amplitude, transient instabilities followed by quiescent phases. This behavior is characteristic of turbulent modes in the dissipative range.

We first utilize the gPC method to compute the evolution of $u,v$. Given the gPC expansion 
\begin{equation*}
u(t,\xi) \approx u_{K,N}(t,\xi_1,\cdots,\xi_K) := \sum_{|\alpha|\leq N} u_{\alpha}(t)\prod_{i=1}^{K} H_{\alpha_i}(\xi_i),
\end{equation*}
we present numerical results for gPC of $u(t)$ and $v(t)$ with $N=2,3$ and for $K=4,10,20$.

      \begin{figure}
        \centering
        \includegraphics[width=15cm]{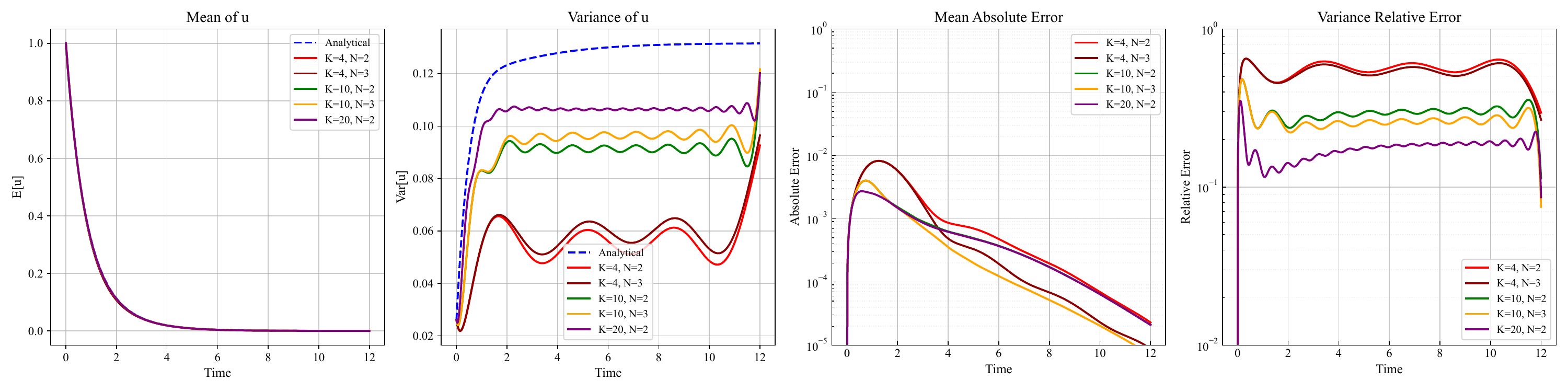}
        \caption{The mean and variance solved by gPC compared with the analytical solution}
        \label{gpc1}
      \end{figure}
As shown in Figure \ref{gpc1}, standard gPC achieves reasonable accuracy only for short-term mean predictions. Its variance, however, exhibits persistent non-physical oscillations that fail to capture the smooth analytical dynamics. These artifacts stem from the global polynomial basis's inability to resolve the system's sharp intermittent bursts and strong nonlinear coupling ($a_u=1, b_u=1.2$). Any apparent variance convergence at the final time is misleading, as the overall evolution remains dominated by large oscillatory errors. Furthermore, increasing the basis size (larger $K,N$) provides only marginal improvements and fundamentally fails to eliminate these oscillations.

Then we employ the DgPC method \cite{Bal2} with $L=2$ and $50$ restarts to solve this problem as a reference for comparison with RPC using $L=2$.
       \begin{figure}
        \centering
        \includegraphics[width=15cm]{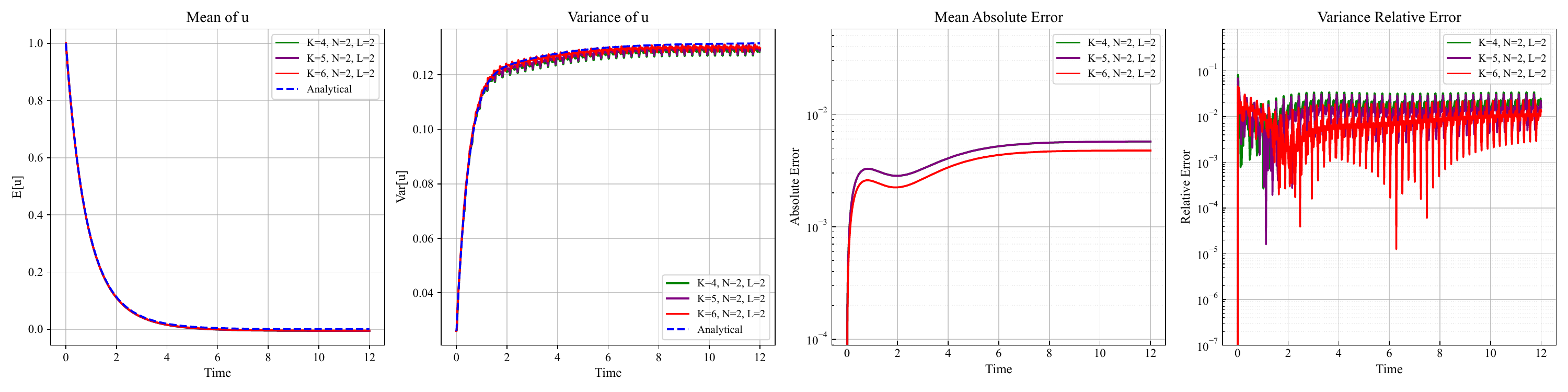}
        \caption{The mean and variance obtained by DgPC compared with the analytical solution}
        \label{dgpc1}
      \end{figure}
We can see from Figure $\ref{dgpc1}$ that DgPC performs significantly better than standard gPC, although small oscillations persist due to the restart procedure. The DgPC method achieves a relative error of approximately $10^{-2}$ at the final time. Further improvements in accuracy can be achieved by increasing $L$ and $K$, but to ensure a fair comparison of computational costs at a fixed accuracy, we do not pursue this here.

In contrast, Figure \ref{couple1} presents the evolution of $u(t)$ computed by our RPC method with $L=2,3$ and $S=2$. RPC achieves excellent agreement with the analytical mean and variance over the entire time interval. It accurately captures the convergence to the invariant measure and eliminates the non-physical oscillations observed in both gPC and DgPC.

       \begin{figure}
        \centering
        \includegraphics[width=15cm]{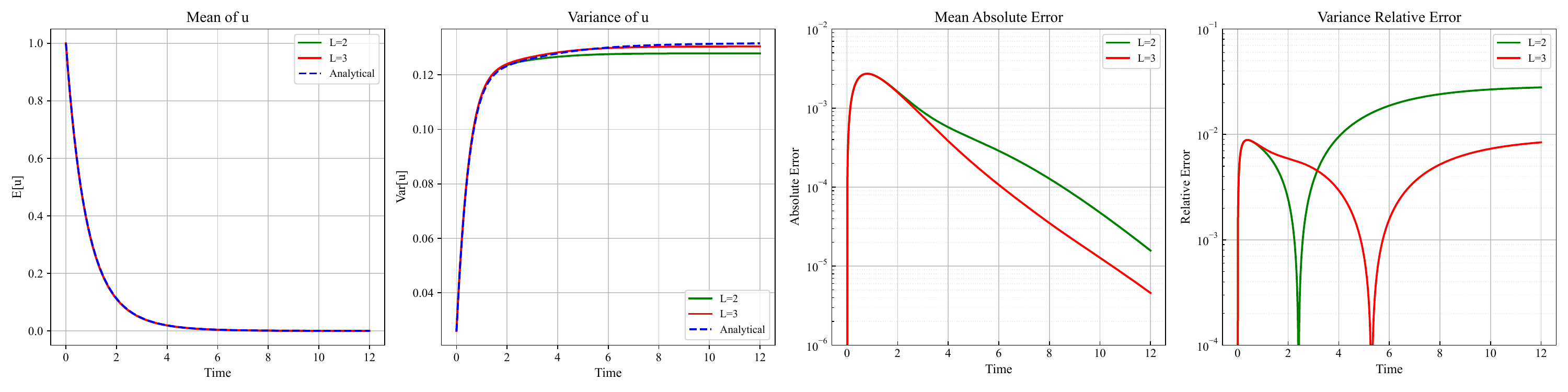}
        \caption{The mean and variance solved by RPC compared with the analytical solution}
        \label{couple1}
      \end{figure}

Moreover, our superior accuracy comes with substantial computational advantages. Even the best-performing gPC configuration ($K=20,N=2$) cannot match our method's effectiveness under the basic setting $L=2$, while requiring $1193.12$s compared to our $13.15$s-more than $100$ times slower. Similarly, when both methods use the same polynomial order $L=2$, our accuracy matches that of DgPC with $K=6,N=2$, yet DgPC requires $280.13$s, whereas our computation is significantly faster.

Finally, we present the evolution of the non-Gaussian statistics and memory effects of the component $u$ computed by RPC in Figure \ref{fig:nongaussian}. As shown in the first panel, the covariance $\text{Cov}(u(t), u_0)$ decays exponentially over time, illustrating the system's fading memory of its initial state, which is consistent with convergence to an invariant measure independent of the initial condition. Even though our algorithm employs a restarting procedure, it still effectively preserves the underlying correlation structure, thereby accurately capturing this long-term memory effect. Concurrently, the excess kurtosis of $u$ increases significantly from zero, stabilizing at values characteristic of a heavy-tailed distribution. The eigenvalues of the numerical Hankel matrix remain uniformly bounded with $\lambda_{\min} > 10^{-6}$ and $\lambda_{\max} < 10^2$ throughout the simulation, demonstrating the numerical stability in constructing the orthogonal polynomial basis.

\begin{figure}
    \centering
    \includegraphics[width=15cm]{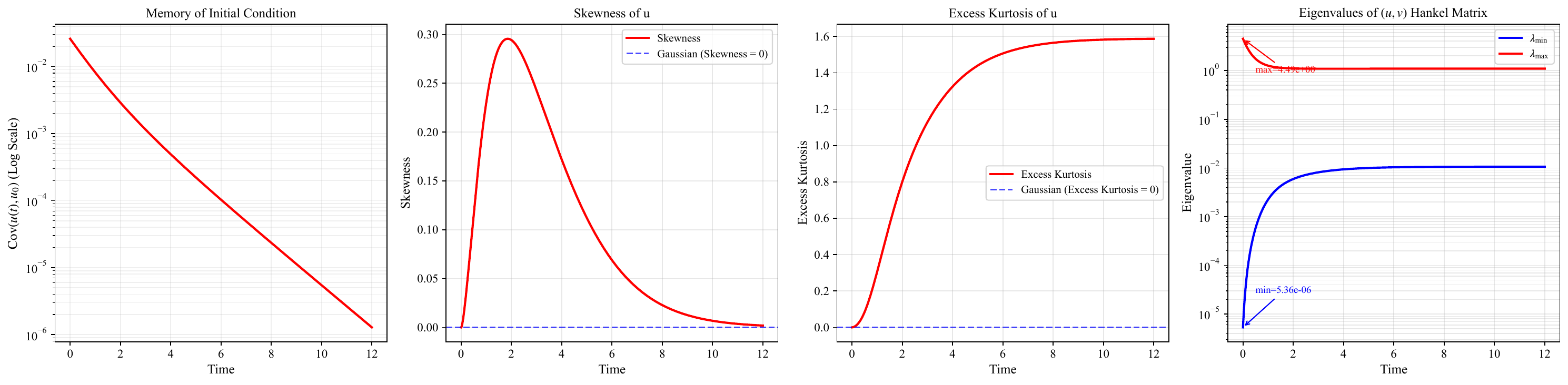}
    \caption{Statistical analysis of component $u$ computed by RPC $(L=3)$. The panels display (from left to right) the covariance $\text{Cov}(u(t), u_0)$ on a semi-logarithmic scale, the time evolution of skewness and excess kurtosis, and the eigenvalue ($\lambda_{\min}$, $\lambda_{\max}$) of the Hankel matrix.}
    \label{fig:nongaussian}
\end{figure}

    Note that our RPC framework can also handle SDEs with random parameters by augmenting the state space. For a random parameter $\xi$ in SDE,  we simply add the trivial equation $d\xi = 0$ to the system. This simple augmentation allows us to treat random coefficients within the same unified framework, without any algorithmic modifications. We validate this capability using  model $(\ref{intereq1})$ with a random coefficient $a_u \sim U(0.1,1.1)$, which is independent of the initial conditions $(u_0, v_0)$  and of $W_u(t)$, $W_v(t)$ . We set $a_v=0.05$  with all other parameters the same as before. This configuration yields a fully coupled nonlinear stochastic system with a random coefficient.
    \begin{figure}
    \centering
    \includegraphics[width=15cm]{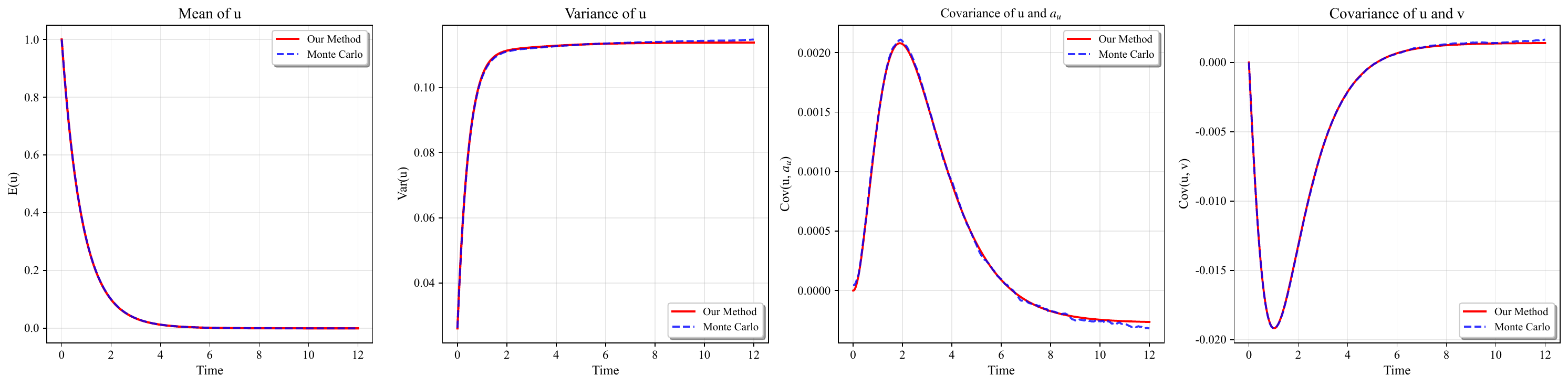}
    \caption{First and second order statistics information of  $u$ and random coefficient $a_u$ computed by RPC.}
    \label{fig:randomcoe}
\end{figure}
    
    Figure $\ref{fig:randomcoe}$ compares our method with $L=3$ against the MC reference solution. RPC accurately captures not only the long-time dynamics of $u$, but also the time-varying correlations between $u$ and $a_u$, which demonstrates that RPC correctly propagates parametric uncertainty through the nonlinear dynamics.

\paragraph{Example 4.2.} For this scenario, we consider a coupled stochastic differential system with multiplicative nonlinear noise \cite{LIU2023137}
\begin{equation*}
    \begin{aligned}
\mathrm{d}u(t) &= \left[10-3u(t)-v(t)\right]\mathrm{d}t + \left[0.5+ 0.1 v(t)\right]\mathrm{d}W_u(t), \\
\mathrm{d}v(t) &= \left[5- u(t) -3 v(t) - v(t)^3\right]\mathrm{d}t +  \left[0.3+ 0.1 u(t) + 0.1v(t)^2\right]\mathrm{d}W_v(t).
\end{aligned}
\end{equation*}
where $W_u(t)$ and $W_v(t)$ are independent standard Brownian motions, initialized with $u_0 \sim {N}(0.3,0.2^2)$ and $v_0 \sim N(0.5,1.2^2)$.  The core challenge of this model lies in the strong competition between the quadratic noise amplification ($0.1v^2$) and the highly dissipative cubic drift ($-v^3$). It produces occasional large-amplitude excursions that are rapidly suppressed, ultimately driving the system toward a non-Gaussian invariant measure with heavy tails. Following \cite{LI2022110210, LIU2023137}, a terminal time of $T=5$ is sufficient to capture this stationary distribution.

We evaluate our RPC method with $L = 3$ and $S = 3$. The reference solution is computed using the Milstein scheme with $10^7$ MC samples and sufficiently small time steps. We then compare RPC against this reference by computing the first four statistical moments, with relative errors at the final time $T$ displayed above each subplot.

\begin{figure}
    \centering
    \includegraphics[width=15cm]{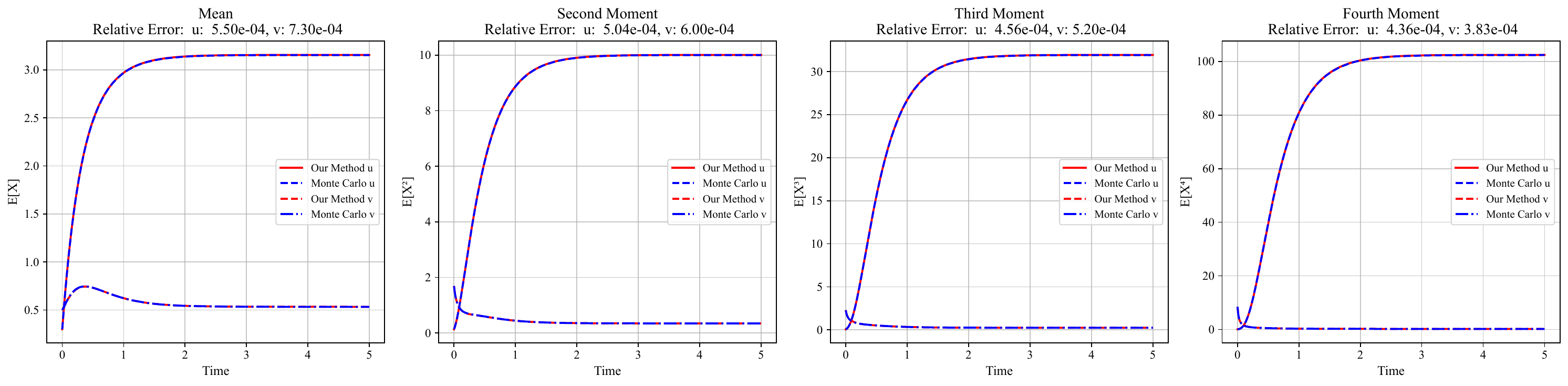}
    \label{EX53}
    \caption{Comparison of the first four order moments with the Monte Carlo solution }
\end{figure}

As shown in Figure \ref{EX53}, despite the complex coupling between state-dependent noise and polynomial nonlinearities, RPC accurately matches the MC reference trajectories and achieves relative errors on the order of $10^{-4}$ for all four moments. The moment evolution of $v$ clearly exhibits the characteristic dynamics: initial growth followed by rapid relaxation to equilibrium values, consistent with the competition between quadratic noise amplification and cubic dissipation that leads the system to the non-Gaussian invariant measure. 

This accuracy is especially notable for multiplicative noise systems, where traditional Euler-based MC methods require extremely large sample sizes to obtain convergence of higher-order moments, due to state-dependent diffusion. Our approach, even though it uses an Euler-type framework, achieves high precision with only parameter $L = 3$, while faithfully capturing the system's non-Gaussian invariant measure, demonstrating significant computational advantages over traditional sampling methods.

\paragraph{Example 4.3.} Next, we consider a triad system with a Gaussian invariant measure proposed in \cite{Sapsis2013BlendedRS}
\begin{equation*}
\begin{aligned}
du(t) &= (-\gamma_1 u(t) + \lambda_{12} w(t) + \lambda_{13} v(t) + \beta_1 v(t) w(t)) dt + \sigma_1 dW_u(t)  \\
dw(t) &= (-\gamma_2 w(t) - \lambda_{12} u(t) + \lambda_{23} v(t) + \beta_2 u(t)v(t)) dt + \sigma_2 dW_w(t)  \\
dv(t) &= (-\gamma_3 v(t) - \lambda_{13} u(t) - \lambda_{23} w(t) + \beta_3 w(t) u(t)) dt + \sigma_3 dW_v(t) 
\end{aligned}
\end{equation*}
with $\beta_1 + \beta_2 + \beta_3 =0$. This is a three-dimensional system whose quadratic part is both divergence-free and energy-preserving. The linear portion consists of a negative definite dissipative operator and a skew-symmetric operator. This simplified model captures key elements of fluid dynamics: the nonlinear terms represent advection, the dissipative terms represent viscous dissipation, the skew-symmetric component represents Coriolis effects, and the stochastic forcing approximates interactions with other modes.

To evaluate our RPC method, we investigate two distinct dynamical regimes and benchmark against a weak Runge-Kutta MC solution with $10^6$ realizations. In practice, convergence is difficult to achieve due to the strong nonlinearity and parameter biases, especially for high-order moments. To alleviate this issue, we employ antithetic variates as a variance reduction technique to suppress strong sampling noise.

\textbf{Case 1:} When we take $\frac{\sigma_1^2}{2\gamma_1} = \frac{\sigma_2^2}{2\gamma_2} = \frac{\sigma_3^2}{2\gamma_3} = E$, the statistical dynamics of the system converges asymptotically in time to an invariant measure with equal energy distribution among the three degrees of freedom. In this case, the invariant measure is Gaussian and given by $p \propto \exp(-\|\mathbf{u}\|^2/2E)$. 

This system serves as a benchmark to demonstrate the limitations of traditional dynamical low-rank methods, which, due to energy equipartition in the invariant measure, can capture at most $1/3$ (single-mode) or $2/3$ (two-mode) of the steady-state covariance $\cite{Sapsis2013BlendedRS}$.

We first set $E=1$ to ensure the standard Gaussian distribution. The parameters are chosen as $\gamma_1 = 0.4,\gamma_2=2, \gamma_3 =2, \lambda_{12}=0.03, \lambda_{13}= 0.06, \lambda_{23}=0.09, \beta_1=2 ,\beta_2=\beta_3=-1, \sigma_1 = \sqrt{0.8}, \sigma_2=\sigma_3=2$, with initial conditions $u_0 \sim N(-1, 0.25) , w_0 \sim N(0.5,2) , v_0 \sim N(-0.5, 0.0225)$. The initial variables are mutually independent and independent of the stochastic forcing. We compare the performance of our method with the reference solution for the first four central moments.

\begin{figure}
        \centering
        \includegraphics[width=15cm]{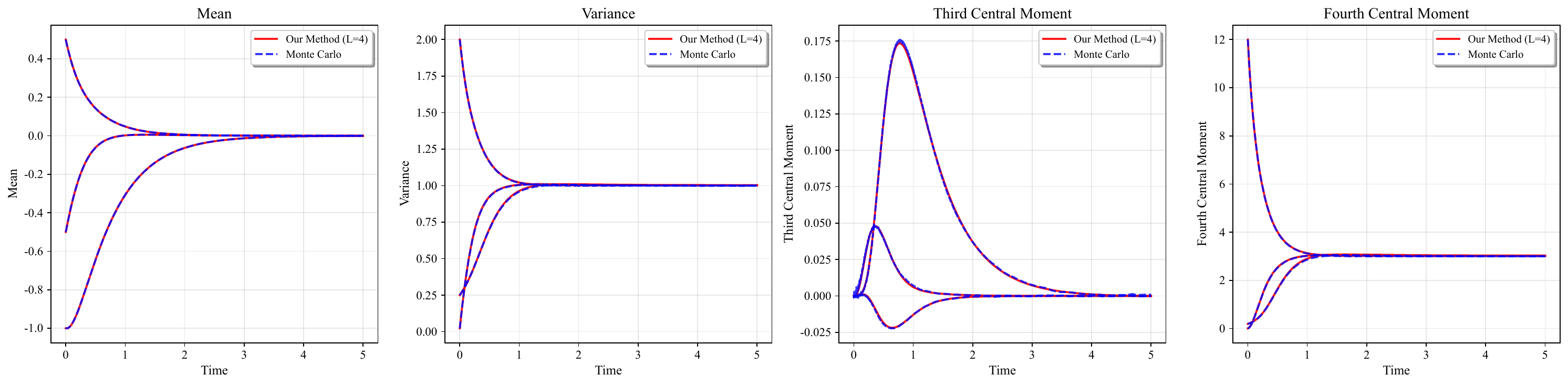}
        \caption{The first fourth order moment compared with the Monte Carlo solution}
        \label{EX56_ourmethod}
    \end{figure}
As shown in Figure \ref{EX56_ourmethod}, RPC accurately tracks all four statistical central moments, demonstrating excellent agreement with the MC reference and cleanly converging to the standard normal distribution as the invariant measure. A particularly important observation concerns the evolution of the third central moment. Even with variance reduction techniques, the MC reference with $10^6$ samples still exhibits visible statistical fluctuations because of incomplete convergence. In contrast, RPC yields a smooth and stable trajectory that converges precisely to the theoretical value. This result highlights a key advantage of our approach over sampling-based methods, as RPC accurately captures high-order transient dynamics without introducing numerical noise or requiring extremely large sample sizes.

\textbf{Case 2: } To further validate the ability of our method to capture non-Gaussian statistics, we consider a non-equilibrium regime where $\frac{\sigma_i^2}{2\gamma_i} \neq E$. This parameter choice drives the system towards a non-Gaussian invariant measure. In this simulation, the parameters are chosen as $\gamma_1 = 0.9, \gamma_2=1.2, \gamma_3 =1.5, \lambda_{12}=\lambda_{13}=\lambda_{23}=0.1, \beta_1=1.2, \beta_2=0.6, \beta_3=-1.8$. The noise intensities are specified by $\frac{\sigma_1^2}{2\gamma_1} = 0.6, \frac{\sigma_2^2}{2\gamma_2} = 0.4, \frac{\sigma_3^2}{2\gamma_3} = 0.3$, and the initial conditions are set to $u_0 \sim N(-0.5, 0.09), w_0 \sim N(0.2, 0.09), v_0 \sim N(0.5, 0.04)$. 

\begin{figure}
        \centering
        \includegraphics[width=15cm]{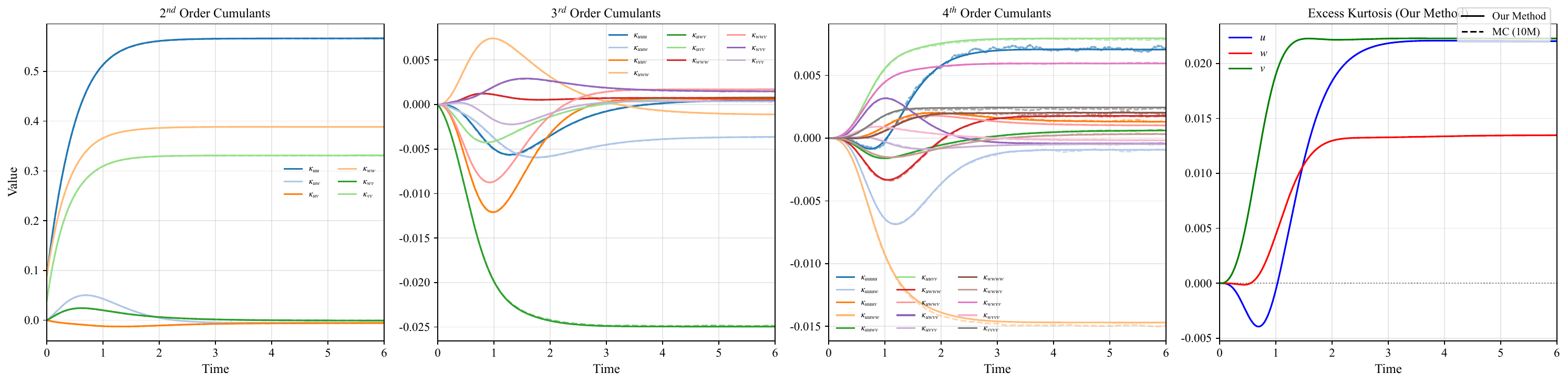}
        \caption{Comparison of the first four cumulants with the MC solution.}
        \label{EX56_nongaussian}
    \end{figure}

Figure \ref{EX56_nongaussian} shows the time evolution of the cumulants up to the fourth order and the excess kurtosis computed by RPC and MC. In contrast to the previous case, the higher-order cumulants (especially $\kappa_3$ and $\kappa_4$) deviate from zero,  indicating the non-Gaussian character of the solution. RPC shows excellent agreement with the MC reference solution, accurately resolving the complex nonlinear interactions and the resulting non-Gaussian features. These results, together with those in the Gaussian case, demonstrate that our method is highly effective for solving stochastic systems regardless of whether the invariant measure is Gaussian or non-Gaussian.

Besides the MC and gPC approaches considered above, SDEs can also be solved via Fokker-Planck equations \cite{risken1989fokker,pavliotis2014stochastic}. However, solving the corresponding Fokker-Planck equation with classical methods becomes computationally intractable for three-dimensional or higher-dimensional problems, due to the exponential growth of the computational cost with dimension. In contrast, RPC demonstrates remarkable efficiency and accuracy for such 3D coupled nonlinear stochastic systems, as evidenced by the close agreement with MC reference solutions for all four statistical moments. This illustrates the strong potential of RPC for high-dimensional problems.

\paragraph{Example 4.4.} Finally, to evaluate the potential of RPC for higher-dimensional problems governed by complex nonlinear interactions, we apply it to the stochastic Lorenz-96 model \cite{Sapsis2013BlendedRS}. This system serves as a canonical model in atmospheric science for the time evolution of scalar quantities on a latitude circle. The dynamic of this $6$-dimensional system is governed by the following SDE:
\begin{equation*}
\label{L96}
    dX_k(t) = \left[ (X_{k+1} - X_{k-2})X_{k-1} - X_k + F \right] dt + \sigma dW_k(t), \quad k=1, \dots, d,
\end{equation*}
with periodic boundary conditions and noise intensity $\sigma=0.08$. We specifically select a forcing parameter of $F=0.9$. This choice is motivated by the convergence requirements of polynomial chaos approximations, which depend on sufficient smoothness and regularity of the underlying probability density function. The $F=0.9$ regime guarantees the existence of a stable attractor and maintains sufficient regularity of the probability measure, while still exhibiting strong non-Gaussian features induced by the quadratic nonlinearity. 

To quantify the statistical evolution of the system approximated by RPC, we show the results of two physically meaningful macroscopic observables: the energy of the mean, defined as $ \frac{1}{2}\sum_{k=1}^d \mathbb{E} (X_k(t))^2 $, which represents the kinetic energy of the mean state; and the total variance, defined as $ \sum_{k=1}^d \text{Var}(X_k(t))$, which represents the total turbulent energy or aggregate uncertainty. In addition, we compute the
third-order central moment tensor $M_{ijk}$ in the statistical steady state to characterize the structure of nonlinear mode interactions.  We use $T=25$ for long-time evolution and $\Delta t =0.01$.

\begin{figure}
    \centering
    \includegraphics[width=15cm]{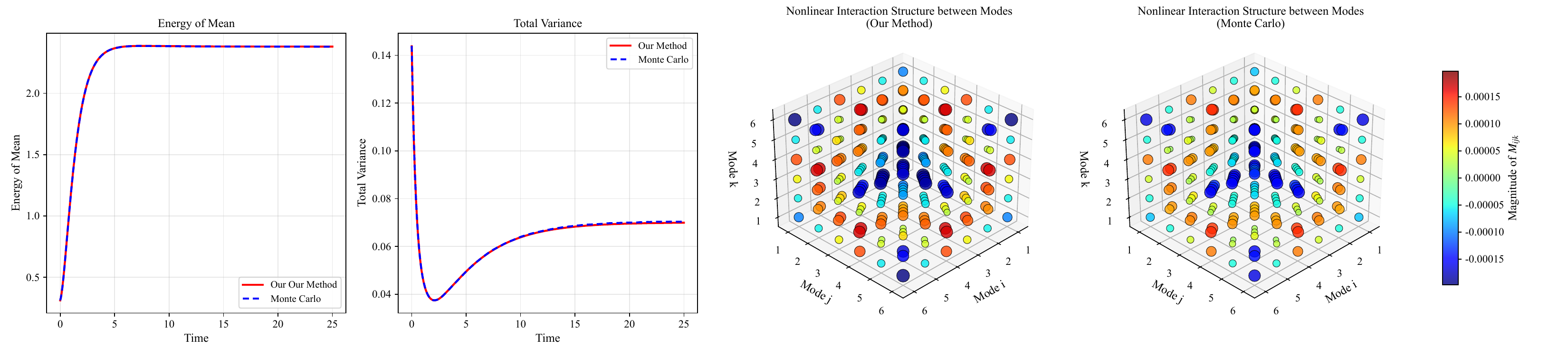}
    \caption{Comparison of the mean energy, total variance, and the steady-state structure of nonlinear interactions characterized by the third-order moment tensor with the MC solution.}
    \label{lorenz96}
\end{figure}

Figure \ref{lorenz96} presents a comparison between RPC ($L=2$,$S=2$) and MC simulations ($1,000,000$ paths). The results show excellent agreement for both the rapid transient growth of the mean energy and the non-monotonic evolution of the total variance. Furthermore, the close visual correspondence in the mode-mode nonlinear interaction structure confirms that RPC accurately captures the complex, multi-dimensional non-Gaussian correlations inherent in the system.

\section{Conclusions}\label{sec:conclusions}

\quad  This paper proposes a recursive polynomial chaos evolution (RPC) method as a model reduction technique for the long-time evolution of SDEs.
Rather than relying on a fixed orthogonal basis or a large number of sample paths, our method recursively evolves the orthonormal polynomial basis at each discrete time step. At every step, we compute a new orthogonal polynomial basis tailored to the corresponding probability measure. The system is then evolved by projecting the next state onto this updated basis and incorporating new Brownian increments. This constitutes a flexible framework that enables us to capture complex long-time dynamical behaviors of stochastic systems with reasonable high accuracy and low computational cost. We present two implementations within this framework: the first one directly evolves the orthonormal basis and associated algebraic structures via the It\^o-Taylor expansion while the second one evolves the moments and reconstructs the orthonormal polynomials at each time step through a Gram-Schmidt orthogonalization procedure from the Hankel matrix of moments.

We provide a convergence analysis in terms of both moments and the Wasserstein distance. Numerical results validate the good performance of RPC across a variety of challenging problems. We demonstrate that RPC consistently and accurately captures complex long-term phenomena, including invariant measures and non-Gaussian dynamics, where traditional PCE fails. Moreover, the resulting dynamics are stable and smooth, free from the oscillations that affect PCE during the time evolution or the statistical fluctuations that characterize insufficiently converged MC results. This advantage is also pronounced in multiple dimensions, as demonstrated by 3D and 6D test cases, a regime in which grid-based methods such as Fokker-Planck solvers become very expensive or intractable, and MC simulations are computationally less efficient.

Several interesting directions remain for future work. One we already mentioned is a global theory of stability of RPC, which concerns the growth conditions of the moments and the stability conditions for the construction of orthonormal polynomials. Generalizing RPC to non-Markovian SDEs, such as stochastic delay differential equations (SDDEs) and stochastic functional differential equations (SFDEs), represents a valuable and natural extension. The method may encounter difficulties when applied to systems with highly chaotic dynamics \cite{majdapnas,Majda_Harlim_2012}, and developing efficient generalizations to solve such problems remains an open issue. Another promising direction is the adaptation of RPC to high-dimensional SDEs ($d\geq 10$) and stochastic partial differential equations (SPDEs), which may require further integration with other model reduction techniques \cite{kazashi2025dynamical,CHENG2013753,Sapsis2009DynamicallyOF,Lujianfengdynamical,Lubich2014} to handle the curse of dimensionality.

\appendix
\section{Proofs of main results}
\label{more_proofs}
\subsection{Proof of Lemma~\ref{prop:closure}}\label{proof of proposition 3.1}
\begin{proof}
It suffices to verify closure for the most demanding term in~\eqref{tripleproduct_RPC}, namely the diffusion contribution
\begin{equation*}\label{eq:diffusion_triple}
  \frac{h}{2}\sum_{i,l=1}^{d}\sum_{j=1}^{m}
  \mathcal{E}\!\left[
    \frac{\partial^{2}}{\partial v^{(i)}\partial v^{(l)}}
      \bigl(T^{\pRPC}_{k,\alpha} T^{\pRPC}_{k,\beta} 
            T^{\pRPC}_{k,\gamma}\bigr)
    \Sigma^{(i,l)}
  \right],
  \qquad (\alpha,\beta,\gamma)\in\mathcal{J}_{L,S}.
\end{equation*} 
Here, $\Sigma^{(i,l)}:=\sum_{j=1}^{m}\sigma_h^{(i,j)}\sigma_h^{(l,j)}$ has degree at most $S$. We detail the calculation for the representative case where both derivatives act on $T^{\pRPC}_{k,\alpha}$; the remaining terms follow analogously and have strictly lower total degrees. We denote the partial derivative operator as $\partial^2_{i,l} \equiv \frac{\partial^2}{\partial v^{(i)}\partial v^{(l)}}$. Expanding in the basis, we write
\begin{equation*}\label{eq:deriv_expand}
  \partial^2_{i,l} T^{\pRPC}_{k,\alpha} 
  = \sum_{|\eta|\leq|\alpha|-2}
    D_{\alpha,\eta}^{(i,l)} T^{\pRPC}_{k,\eta},
  \qquad
  \Sigma^{(i,l)}
  = \sum_{|\mu|\leq S}
    s_{\mu}^{(i,l)} T^{\pRPC}_{k,\mu}.
\end{equation*}
A further expansion yields
\begin{equation*}\label{eq:expansion_example}
\mathcal{E} \left[ (\partial^2_{i,l} T^{\pRPC}_{k,\alpha}) T^{\pRPC}_{k,\beta} T^{\pRPC}_{k,\gamma} \Sigma^{(i,l)} \right]
= \sum_{|\eta| < |\alpha|} \sum_{|\mu| \leq S} D_{\alpha,\eta}^{(i,l)} s_\mu^{(i,l)} \mathcal{E} \left[ T^{\pRPC}_{k,\eta} T^{\pRPC}_{k,\beta} T^{\pRPC}_{k,\gamma} T^{\pRPC}_{k,\mu} \right]. 
\end{equation*} 
To reduce four-fold expectations to a bilinear expression in triple products, we project the pair $T^{\pRPC}_{k,\eta} T^{\pRPC}_{k,\beta}$ onto the truncated basis using the multiplication rule \eqref{product calculation}, with the projection level adapted to the index magnitudes. When $|\eta|+|\beta|\leq J - L = L+S$, we project onto the full truncated basis of degree $L$:
\begin{equation*}\label{eq:proj_case1}
  T^{\pRPC}_{k,\eta} T^{\pRPC}_{k,\beta}
  \approx
  \sum_{|\nu|\leq L}\Gamma^{\pRPC}_{k,\eta\beta\nu} T^{\pRPC}_{k,\nu},
\end{equation*}
so that the four-fold expectation becomes $\sum_{|\nu|\leq L} \Gamma^{\pRPC}_{k,\eta\beta\nu}  \Gamma^{\pRPC}_{k,\nu\gamma\mu}$. Both triple products belong to $\mathcal{J}_{L,S}$: for the first, $|\eta|+|\beta|+|\nu|\leq (L+S)+L = J$; for the second, $|\nu|+|\gamma|+|\mu|\leq L+L+S = J$. When $|\eta|+|\beta|> L+S$, a projection onto degree~$L$ will produce triple products $\Gamma^{\pRPC}_{k,\eta\beta\nu}$ with total degree exceeding~$J$. Instead, we lower the projection level to $J-|\eta|-|\beta| \leq L$:
\begin{equation*}\label{eq:proj_case2}
 T^{\pRPC}_{k,\eta} T^{\pRPC}_{k,\beta}
  \approx
  \sum_{|\nu|\leq J-|\eta|-|\beta|}
\Gamma^{\pRPC}_{k,\eta\beta\nu} T^{\pRPC}_{k,\nu}.
\end{equation*}
By construction, $|\eta|+|\beta|+|\nu|\leq J$ for the first triple product. For the second, $|\nu|+|\gamma|+|\mu|\leq  J-|\eta|-|\beta|+|\gamma|+S$. Since $|\eta|+|\beta|>L+S$ by assumption and $|\gamma| \leq L$, this yields $|\nu|+|\gamma|+|\mu|< J - L+|\gamma|\leq J$. The drift contributions yield a weaker constraint that is subsumed by the above analysis.

It follows that all triple products on the right-hand side of \eqref{tripleproduct_RPC} satisfy total degree at most $J$ and hence belong to $\mathcal{J}_{L,S}$, which establishes the closure of pRPC under index set $\mathcal{J}_{L,S}$.
\end{proof}

\subsection{Proof of Lemma \ref{eq:boundW1}}
\label{proof of Lemma 4.6}
 \begin{proof}
        Define $\tilde\mu$ to be the distribution of $x/\hat R$ when $x\sim\mu$, and
        define $\tilde\nu$ analogously.  Then $\tilde\mu$ and $\tilde\nu$ are
        supported on $[-1,1]^d$, and $W_1(\mu,\nu)=\hat R  W_1(\tilde\mu,\tilde\nu)$.
        The moments transform as
        $m^{\tilde\mu}_\gamma=m^\mu_\gamma/(\hat R)^{|\gamma|}$, so it suffices to prove the lemma for $\hat R=1$.  We write $\Delta m_\gamma=m^{\tilde\mu}_\gamma-m^{\tilde\nu}_\gamma$ for brevity.
        
        By the Kantorovich-Rubinstein duality and the fact that any Lipschitz function can be arbitrarily well approximated by a smooth function \cite{musco2025sharper}, the Wasserstein distance admits the representation
        \begin{equation*}\label{eq:KR}
        W_1(\tilde\mu,\tilde\nu)
        = \sup_{\substack{f:[-1,1]^d\to\R\\[1pt]
        f\text{ is 1-Lipschitz and smooth}}}
        \int_{[-1,1]^d}  f(x)\bigl(\tilde\mu(x)-\tilde\nu(x)\bigr)  dx
        \end{equation*}
        with $f(0)=0$.
        Let $U_j$ denote the $j$-th Chebyshev polynomial of the first kind on $[-1,1]$.  For a multi-index
        $\gamma=(\gamma_1,\dots,\gamma_d)$, set $U_\gamma(x)=\prod_{i=1}^d U_{\gamma_i}(x_i)$
        and define the weight function $\omega(x)=\prod_{i=1}^d(1-x_i^2)^{-1/2}$.  The
        normalized Chebyshev polynomials
        $\bar U_\gamma(x)=\sqrt{2^{\nnz(\gamma)}/\pi^d} U_\gamma(x)$, where
        $\nnz(\gamma)=|\{i:\gamma_i\neq 0\}|$, form an orthonormal basis of the weighted space $L^2_\omega([-1,1]^d)$ with inner product
        $\langle f,g\rangle_\omega=\int_{[-1,1]^d}f  g \omega  dx$. Every $1$-Lipschitz smooth function
        $f: [-1,1]^d \- \to \R$ admits a Chebyshev expansion
        $f=\sum_{\gamma \in \mathbb{N}_{0}^d}c_\gamma \bar U_\gamma $ converging uniformly, where
        $c_\gamma =\langle f,\bar U_\gamma \rangle_\omega$, and the Parseval identity reads 
        \begin{equation}\label{eq:parseval}
        \sum_{\gamma \in \mathbb{N}_{0}^d} c_\gamma ^2
        = \int_{[-1,1]^d} f(x)^2  \omega(x)  dx \leq  d  \pi^d.
        \end{equation}
        
        The following multivariate version of Jackson's theorem is proved in \cite{musco2025sharper}.  Given an $1$-Lipschitz smooth function $f:[-1,1]^d\to\R$ and an integer $m\ge 2$, there exists a polynomial 
        \begin{equation*}
            \tilde f(x)
        = \sum_{\gamma\in\{0,\dots,2m-2\}^d}\tilde c_\gamma \bar U_\gamma(x)
        \end{equation*}
        satisfying $\|f-\tilde f\|_\infty\le 9 d/m$ and $|\tilde c_\gamma|\le|c_\gamma|$ for every $\gamma$. We apply this with $m=\bigl\lfloor s/(2d)\bigr\rfloor+1$. Since $2m-2=2\lfloor q/(2d)\rfloor\ge 0$ and $m\ge q/(2d)$, the approximation error satisfies $\|f-\tilde f\|_\infty\le 9d/m\le 18d^2/q$.
        Every monomial $x^\gamma$ appearing in $\tilde f$ has $\gamma_i\le 2m-2$ for each coordinate, and hence total degree $|\gamma|\le d(2m-2)\le q$.  
        
        Now split the inner product
        \begin{equation}\label{eq:split}
        \langle f,\tilde\mu-\tilde\nu\rangle
        = \langle\tilde f,\tilde\mu-\tilde\nu\rangle
        + \langle f-\tilde f,\tilde\mu-\tilde\nu\rangle,
        \end{equation}
        where we write $\langle f,\tilde\mu-\tilde\nu\rangle=\int f  d(\tilde\mu-\tilde\nu)$. The remainder is bounded by
        \begin{equation}\label{eq:remainder}
        \bigl|\langle f-\tilde f,\tilde\mu-\tilde\nu\rangle\bigr|
        \le 2\|f-\tilde f\|_\infty
        \le  \frac{36d^2}{q}.
        \end{equation}
        For the first term on the right hand side of \eqref{eq:split}, write $\tilde f(x)=\sum_\gamma a_\gamma  x^\gamma$ in the monomial basis. The Cauchy-Schwarz inequality gives
        \begin{equation}\label{eq:CS}
        \left|\langle\tilde f,\tilde\mu-\tilde\nu\rangle\right|
        = |\sum_{{\gamma\in\{0,\dots,2m-2\}^d}}|a_\gamma  \Delta m_\gamma | \le |\mathbf{a}| |\Delta\mathbf{m}|.
        \end{equation}  
        
        We now bound $|\mathbf{a}|$. Assume each univariate Chebyshev polynomial has a monomial expansion $U_j(x)=\sum_{l=0}^j t_{j,l}  x^l$. Define the matrix 
        $M_1\in\R^{(2m-1)\times(2m-1)}$ with entries
        \[
        (M_1)_{j,l}
        = \sqrt{\frac{2^{  \mathds{1} \{l\ge 1\}}}{\pi}} t_{j,l},
        \qquad 0\leq j,l\leq 2m-2,
        \]
        so that the coefficient of $x^l$ in $\bar U_j(x)$ is exactly
        $(M_1)_{j,l}$.  Since each multivariate Chebyshev polynomial factorizes as
        $\bar U_K(x)=\prod_{i=1}^d\bar U_{k_i}(x_i)$, the coefficient of $x^\gamma$
        in $\bar U_K(x)$ is $\prod_{i=1}^d(M_1)_{\gamma_i,\eta_i}$, which is precisely
        the $(\gamma,\eta)$-entry of the Kronecker product
        $M_d=M_1^{\otimes d}$. The coefficient vectors are related by $\mathbf{a}=M_d  \tilde{\mathbf{c}}$. Define $s_l=\sum_{l=0}^j t_{j,l}^2$. We claim that $\sqrt{s_l}\le(1{+}\sqrt{2}  )^l$ for every
        $l\ge 0$. This can be proved by induction using that $U_l(x)=2x  U_{l-1}(x)-U_{l-2}(x)$. 
                
        The norm of each column of $M_d$ indexed by $\eta=(\eta_1,\dots,\eta_d)$ equals
        $\prod_{i=1}^d\frac{2^{  \mathds{1}\{\eta_i\ge 1\}}}{\pi}  s_{\eta_i}$,
        which is at most
        $\bigl(\frac{2}{\pi}\bigr)^d\prod_{i=1}^d(1{+}\sqrt{2}  )^{2\eta_i}
        =\bigl(\frac{2}{\pi}\bigr)^d(1{+}\sqrt{2}  )^{2|\eta|}$. Therefore
        \begin{equation}
        \label{MdF}
            \begin{aligned}
                |M_d|
        &\leq \sum_{\eta\in\{0,\dots,2m-2\}^d}
         \Bigl(\frac{2}{\pi}\Bigr)^{ d}\beta^{|\eta|}
        = \Bigl(\frac{2}{\pi}\Bigr)^{ d}
        \prod_{i=1}^d\sum_{\eta_i=0}^{2m-2}\beta^{\eta_i} = \Bigl(\frac{2}{\pi}\Bigr)^{ d}
        \left(\frac{\beta^{2m-1}-1}{\beta-1}\right)^{ d}
        \leq \frac{(1{+}\sqrt{2}  )^{2q+d}}{\pi^{d}}.
            \end{aligned}
        \end{equation} 
        Substituting \eqref{eq:parseval} and \eqref{MdF} into~\eqref{eq:CS} yields:
        \begin{equation}\label{ft_bound}
        \bigl|\langle \tilde f,\tilde\mu-\tilde\nu\rangle\bigr| \leq|\mathbf{a}| |\Delta\mathbf{m}|\leq |M_d||\mathbf{\tilde c}||\Delta\mathbf{m}|\leq
        \frac{(1{+}\sqrt{2}  )^{q+d/2}}{\pi^{d/2}}
          \cdot\sqrt{d  \pi^d} \cdot \Big(\sum_{|\gamma|\le q}
          |\Delta m_\gamma|^2\Big)^{\!1/2}.
        \end{equation}
        Next, substituting \eqref{ft_bound} and \eqref{eq:remainder} into \eqref{eq:split} and rearranging, we obtain for every $1$-Lipschitz smooth function $f$ with
        $f(\mathbf{0})=0$:
        \begin{equation*}
         \bigl|\langle f,\tilde\mu-\tilde\nu\rangle\bigr|
        \le \frac{36d^2}{q} + C_d\cdot 3^q
          \Big(\sum_{|\gamma|\le q}
          |\Delta m_\gamma|^2\Big)^{ 1/2}.
        \end{equation*}
        where $C_d = \sqrt{d}(1+\sqrt{2})^{d/2}$ .
        Taking the supremum over all such $f$ completes the proof with $g(q)=C_d  3^q$.
    \end{proof}

\subsection{Proof of Lemma \ref{determinate lemma2}} 
    \label{proof of Lemma 3.3} 
    \begin{proof}
    A direct calculation yields the following lemma.
    \begin{Lemma}\label{lem:gaussian_mgf}
    Let $z \sim N(0, I_m)$ be a standard $m$-dimensional Gaussian random vector, let $a \in \R^m$ be a fixed vector, and let $B$ be an $m \times m$ symmetric matrix. If all eigenvalues of $B$ are strictly less than $\frac{1}{2}$, i.e., $I - 2B$ is positive definite, then
    \[
    \E\left[e^{a^T z + z^T B z}\right] = \det(I - 2B)^{-1/2} \exp\Big(\frac{1}{2} a^T (I - 2B)^{-1} a\Big).
    \]
    \end{Lemma}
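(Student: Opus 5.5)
The identity follows from completing the square in the Gaussian integral. Write the expectation explicitly against the standard Gaussian density:
\[
\E\left[e^{a^T z + z^T B z}\right] = (2\pi)^{-m/2}\int_{\R^m} \exp\Big(-\tfrac12 z^T (I-2B) z + a^T z\Big)\,dz .
\]
Set $A := I - 2B$. It is symmetric, and by hypothesis it is positive definite. This single fact is what makes the integral converge. Since $A \succ 0$, the integrand is dominated by a Gaussian kernel, so the integral is finite and we may manipulate it freely. Without that condition the quadratic form would have a nonnegative direction and the expectation would be $+\infty$.

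Next, complete the square. Because $A$ is invertible and symmetric, we have
\[
-\tfrac12 z^T A z + a^T z = -\tfrac12 (z - A^{-1}a)^T A (z - A^{-1}a) + \tfrac12 a^T A^{-1} a .
\]
This follows by expanding the right-hand side and using $A^{-1}$ symmetric. The constant $\exp(\tfrac12 a^T A^{-1} a)$ factors out of the integral. Translating $w = z - A^{-1}a$ leaves $(2\pi)^{-m/2}\int \exp(-\tfrac12 w^T A w)\,dw$.

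The remaining integral is evaluated by diagonalizing. Write $A = Q\Lambda Q^T$ with $Q$ orthogonal and $\Lambda=\mathrm{diag}(\lambda_1,\dots,\lambda_m)$, where each $\lambda_i = 1-2\beta_i > 0$ and the $\beta_i$ are the eigenvalues of $B$. Change variables by $y = Q^T w$, which has unit Jacobian. The integral then factors into one-dimensional Gaussian integrals:
\[
\int_{\R} e^{-\lambda_i y_i^2/2}\,dy_i = \sqrt{2\pi/\lambda_i}.
\]
Multiplying these gives $(2\pi)^{m/2}\det(A)^{-1/2}$.

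Combining the pieces yields $\det(I-2B)^{-1/2}\exp\big(\tfrac12 a^T(I-2B)^{-1}a\big)$, which is the claimed formula. The only step needing care is the convergence justification via the eigenvalue condition $\beta_i<\tfrac12$; everything else is routine linear algebra. I expect this lemma to be used afterward with $z=z_k$, $a$ proportional to $\sqrt h\,\sigma_h^T x$ and $B$ of order $h\,\sigma_h^T\sigma_h$. There, the eigenvalue condition holds for $h$ small by the uniform bound $|\sigma_h|<K$, and this is what produces the one-step bound on $\E[e^{\hat\delta|v_{k+1}|^2}\mid v_k]$ used for Lemma~\ref{determinate lemma2}.
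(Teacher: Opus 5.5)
Your proof is correct and is the same approach as the paper, which gives no details beyond ``a direct calculation'': you write the expectation as a Gaussian integral with precision matrix $I-2B$, complete the square, and evaluate the remaining integral by orthogonal diagonalization. Your expectation about how the lemma is used later also matches the paper, which takes $a = 2\hat\delta\sqrt{h}\,\sigma_h^T\hat v_k$ and $B = \hat\delta h\,\sigma_h^T\sigma_h$.
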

    Let $\mathscr{F}_k$ be the $\sigma$-field generated by $v_0,\cdots,v_k$. Denote $\hat{v}_k = v_k + b_h(v_k)h$. By Lemma \ref{lem:gaussian_mgf},
    \begin{equation}\label{exp eq1}
        \begin{aligned}
            &\mathbb{E}\left[\exp\left(\hat\delta|v_{k+1}|^{2}\right) | \mathscr{F}_k\right]=\mathbb{E}\left[\exp\left(\hat\delta|\hat{v}_{k} + \sigma_{h}(v_{k})z_{k}\sqrt{h}|^{2}\right) | \mathscr{F}_k\right]\\
            = & \mathbb{E}\left[\exp\left(\hat\delta|\hat{v}_{k}|^{2} +  2\hat\delta\sqrt{h}\hat{v}_{k}^T\sigma_{h}(v_{k})z_{k}+ \hat\delta h|\sigma_{h}(v_{k})z_{k}|^{2}\right) | \mathscr{F}_k\right] \\
            =  & e^{\hat\delta |\hat{v}_k|^2}\det(I - 2\hat\delta h \sigma^T_{h}\sigma_{h}(v_{k}))^{-1/2} \exp\left(2\hat\delta^2h\hat{v}_{k}^T\sigma_{h} (I - 2\hat\delta h \sigma^T_{h}\sigma_{h}(v_{k}))^{-1}\sigma_{h}^T\hat{v}_{k}\right).
        \end{aligned}
    \end{equation}
    By Assumption \ref{asm:drift diffusion terms}, taking $\hat\delta < 1$ and $h^* \leq 1 / (4K^2) $, we have
    \begin{equation*}
        \det(I - 2\hat\delta h A)^{-1/2} \leq (1 - 2\hat\delta h K^2)^{-m/2}\leq e^{2m\hat\delta h K^2},
    \end{equation*}
    where the last inequality follows from $(1-x)^{-1}\leq e^{2x}$ for $x\in[0,1/2]$. Furthermore,
    \begin{equation*}
        \hat{v}_{k}^T\sigma_{h} (I - 2\hat\delta h \sigma^T_{h}\sigma_{h}(v_{k}))^{-1}\sigma_{h}^T\hat{v}_{k} \leq \frac{K^2|\hat{v}_k|^2}{1 - 2\hat\delta h K^2}\leq 2K^2|\hat{v}_k|^2.
    \end{equation*}
    Therefore, \eqref{exp eq1} implies
    \begin{equation*}\label{exp bdd eq1}
        \mathbb{E}\left[\exp\left(\hat\delta|v_{k+1}|^{2}\right) | \mathscr{F}_k\right] \leq \exp\left( \hat\delta(1+ 4K^2\hat\delta h) |\hat{v}_k|^2 + 2m\hat\delta hK^2 \right).
    \end{equation*}
    Since
    \begin{equation*}
        \begin{aligned}
            |\hat{v}_k|^2 &\leq (1+Kh)^2|v_k|^2 + 2(1+Kh)Kh|v_k| + K^2h^2 \\
        &\leq (1+Kh)^2|v_k|^2 + (1+Kh)Kh(|v_k|^2 + 1) + K^2h^2 ,
        \end{aligned}
    \end{equation*}
    and since $\hat\delta < 1$, there exists a constant $C$ depending only on $K$ such that
    \begin{equation}\label{exp eq2}
        \mathbb{E}\left[\exp\left(\hat\delta|v_{k+1}|^{2}\right) | \mathscr{F}_k\right] \leq \exp\left( \hat\delta(1+ Ch) |v_k|^2 + Ch \right).
    \end{equation}
    The bound~\eqref{exp eq2} exhibits a recursive form. Taking expectation on both sides and using induction yields, for all $0\leq k\leq n$,
    \begin{equation*}
        \mathbb{E}\left[\exp\left(\hat\delta|v_{k}|^{2}\right)\right] \leq  \mathbb{E}\left[\exp\left(\hat\delta e^{CT}|v_{0}|^{2} + CT\right)\right] 
    \end{equation*}
    Consequently, by initially choosing $ \hat\delta\leq \min\{\delta_0 e^{-CT},1\} $ and applying Assumption \ref{asm:initial condition}, we obtain that $v_k$ is exponentially integrable.

    For scheme \eqref{projection scheme}, since
    \[
    \max\left\{ \sup_{x \in I_R} |b_h(x) - p_b^{R,S}(x)|, \sup_{x \in I_R} |\sigma_h(x) - p_\sigma^{R,S}(x)| \right\} \leq C R^rS^{-r},
    \]
    we obtain via a similar argument that
    \begin{equation*}
        \begin{aligned}
            \mathbb{E}\left[\exp\left(\hat\delta|v^{R,S}_{k+1}|^{2}\right) \middle| \mathscr{F}_k\right] &\leq \mathbb{E}\left[\exp\left(\hat\delta|\hat{v}_{k}^{R,S} + \sigma_{h}(v^{R,S}_{k})z_{k}\sqrt{h}|^{2}\right) | \mathscr{F}_k\right]\\
            &\leq \exp\left( \hat\delta(1+ Ch) |v^{R,S}_k|^2 + Ch + Ch R^rS^{-r} \right).
        \end{aligned}
    \end{equation*}
    The desired result follows accordingly.
    \end{proof}

\subsection{Proof of Lemma \ref{Lemma 1}} 
\label{proof of Lemma 3.4}
\begin{proof}
    We prove that there exist constants $\delta > 0$, $C > 0$ and $h^*$ depending only on $K$ and $T$, such that for all $h \in (0, h^*]$ and $R > 0$:
    \begin{equation*}
        \mathbb{P}\left(\max_{0 \leq k \leq n} |v_k| > R/3\right) \leq Ce^{-\delta R^2}.
    \end{equation*}
    From Lemma \ref{determinate lemma2} and \ref{exp eq2}, there exists a constant $C_1 > 0$ depending on $K$ such that for $0 < \theta < \delta$,
    \begin{equation}\label{Rbdd eq1}
        \mathbb{E}\left[\exp\left(\theta|v_{k+1}|^{2}\right) | \mathscr{F}_k\right] \leq \exp\left( \theta(1+ C_1 h) |v_k|^2 + C_1 h \right),
    \end{equation}
    where $\delta$ is obtained from Lemma \ref{determinate lemma2}. Choose $\delta > 0$ such that $\delta e^{C_1 T} \leq \hat\delta$, and define $\Theta_k = \delta(1+C_1 h)^{n-k}$ for $k = 0, 1, \ldots, n.$ 
    From \eqref{Rbdd eq1}, we have
    \begin{equation*}
        \mathbb{E}\left[\exp\left(\delta_{k+1}|v_{k+1}|^{2}\right) | \mathscr{F}_k\right] \leq \exp\left( \delta_{k+1}(1+ C_1 h) |v_k|^2 + C_1 h \right) 
        = \exp\left( \delta_{k}|v_k|^2 + C_1 h \right).
    \end{equation*}
    Define $M_k := e^{\Theta_k |v_k|^2 - C_1 k h}$ for $k = 0, 1, \ldots, n$. Then
    \begin{equation*}
        \begin{aligned}
            \E[M_{k+1} | \mathscr{F}_k] 
            &= e^{-C_1(k+1)h} \E\left[e^{\delta_{k+1}|v_{k+1}|^2}  \big|  \mathscr{F}_k\right] \leq e^{-C_1(k+1)h} \cdot e^{\Theta_k |v_k|^2 + C_1 h} = M_k.
        \end{aligned}
    \end{equation*}
    Thus $\{M_k\}_{k=0}^n$ is a non-negative supermartingale with respect to $\{\mathscr{F}_k\}_{k=0}^n$. Applying Doob's maximal inequality, we obtain the uniform bound
    \begin{equation*}
        \mathbb{P}\left(\max_{0 \leq k \leq n} M_k > \lambda\right) \leq \frac{\E[M_0]}{\lambda}.
    \end{equation*}
    If $|v_k| > R/3$ for some $k \in \{0, \ldots, n\}$, then using $\Theta_k \geq \delta$ and $C_1 k h \leq C_1 T$,
    \[
    M_k = e^{\Theta_k |v_k|^2 - C_1 k h} \geq e^{\delta (R/3)^2 - C_1 T} = e^{\delta R^2/9 - C_1 T}.
    \]
    Taking $\lambda = e^{\delta R^2/9 - C_1 T}$ in Doob's inequality yields
    \begin{equation*}
        \begin{aligned}
            \mathbb{P}\left(\max_{0 \leq k \leq n} |v_k| > R/3\right) 
            &\leq \mathbb{P}\left(\max_{0 \leq k \leq n} M_k > \lambda\right)\leq \frac{e^{\delta |v_0|^2}}{e^{\delta R^2/9 - C_1 T}} = e^{\delta |v_0|^2 - \delta R^2/9 + C_1 T}.
        \end{aligned}
    \end{equation*}

    Define $\tau_R = \inf\{ k:  |v_k|\geq R/3 \}$.
    Then for all $0\leq k\leq n$,
    \begin{equation}\label{martingale est}
        \begin{aligned}
            \mathbb{E}\left[| \chi_{R}(v_{k}) - v^{R}_{k}|^{2}\right] &= \mathbb{E}\left[| \chi_{R}(v_{k}) - v^{R}_{k}|^{2}\mathds{1}_{\{\tau_R >n\}}\right] + \mathbb{E}\left[| \chi_{R}(v_{k}) - v^{R}_{k}|^{2}\mathds{1}_{\{\tau_R\leq n\}}\right]. 
        \end{aligned}
    \end{equation}
    On the event $\{\tau_R > n\}$, we have $v_0 = v_0^R$ by definition. By induction, it follows that $|v_k|< R/3$, $|v_k^R| < R/3$, and $v_k = v_k^R$ for all $0 \leq k \leq n$. Consequently, the first term in \eqref{martingale est} vanishes. For the second term, the Cauchy inequality yields
    \begin{equation*}
        \mathbb{E}\left[| \chi_{R}(v_{k}) - v^{R}_{k}|^{2}\mathds{1}_{\{\tau_R\leq n\}}\right] \leq \left(\mathbb{P}(\tau_R\leq n)\right)^{1/2} \left(\mathbb{E}\left[|\chi_R(v_k) - v_k^R|^4\right]\right)^{1/2}.
    \end{equation*}
    Since
    \begin{equation*}
        \mathbb{P}(\tau_R\leq n) = \mathbb{P}\left(\max_{0 \leq k \leq n} |v_k| > R/3\right) \leq Ce^{-{\delta}R^2},
    \end{equation*}
    and $\mathbb{E}\left[|\chi_R(v_k) - v_k^R|^4\right]$ is uniformly bounded by Lemma \ref{determinate lemma2}, this completes the proof.
\end{proof}

\section*{Acknowledgments}
GB's work was funded in part by NSF Grant DMS-2306411 and ONR Grant N00014-26-1-2017. ZZ was partly supported by the National Natural Science Foundation of China (Projects 92470103), the Hong Kong RGC grant (Projects 17304324 and 17300325), the Seed Funding Programme for Basic Research (HKU), and the Hong Kong RGC Research Fellow Scheme 2025. The computations were performed using the research computing facilities offered by Information Technology Services, the University of Hong Kong.

\bibliographystyle{siamplain}
\bibliography{references}

\end{document}